\theoremstyle{definition}
\newtheorem{thm}{Theorem}[section]
\newtheorem{cor}[thm]{Corollary}
\newtheorem{lem}[thm]{Lemma}
\newtheorem{prop}[thm]{Proposition}
\newtheorem{defn}[thm]{Definition}
\newtheorem{eg}[thm]{Example}
\newtheorem{rem}[thm]{Remark}
\newtheorem{setup}[thm]{Setup}
\newtheorem{maintheorem}{Theorem}
\newcommand{\RR}{\mathbb R}
\newcommand{\CC}{\mathbb C}
\newcommand{\PP}{\mathbb P}
\newcommand{\be}{\mathbf{e}}
\newcommand{\bw}{\mathbf{w}}
\newcommand{\bv}{\mathbf{v}}
\newcommand{\bu}{\mathbf{u}}
\newcommand{\bs}{\mathbf{s}}
\newcommand{\bi}{\mathbf{i}}
\newcommand{\tinv}{\widetilde{\textnormal{inv}}}
\title{On two notions of total positivity for generalized partial flag varieties of classical Lie types}
\author{Grant Barkley, Jonathan Boretsky, Christopher Eur, Jiyang Gao}
\begin{document}
\begin{abstract}
For Grassmannians, Lusztig's notion of total positivity coincides with positivity of the Pl\"ucker coordinates.  This coincidence underpins the rich interaction between matroid theory, tropical geometry, and the theory of total positivity.  Bloch and Karp furthermore characterized the (type A) partial flag varieties for which the two notions of positivity similarly coincide.  We characterize the symplectic (type C) and odd-orthogonal (type B) partial flag varieties for which Lusztig's total positivity coincides with Pl\"ucker positivity.
\end{abstract}

\maketitle

\vspace{-10pt}

\section{Introduction}
Let $n$ be a positive integer, and denote $[n] = \{1, \dotsc, n\}$.
The \emph{totally positive} (resp.\ \emph{nonnegative}) \emph{part} $GL_n^{>0}$ (resp.\ $GL_n^{\geq 0}$) of the general linear group $GL_n$ consists of the real invertible matrices whose minors are all positive (resp.\ nonnegative).
The study of these spaces traces back to \cite{ASW, Loewnerpositive, Whitneypositivity}.
Lusztig generalized this notion of total positivity to an arbitrary connected reductive ($\RR$-split) algebraic group $G$ and its partial flag varieties $G/P$ \cite{lusztig1994positivity,lusztig1998positivity}.
The study of total positivity has since been a nexus for fruitful interactions between algebraic geometry, representation theory, combinatorics, and physics \cite{physicsbook,FZsurvey,postnikov2006,Fomincluster,GWpos}.

\medskip
Underpinning such fruitful interactions is the interplay between ``parametric'' and ``implicit'' descriptions of total positivity.
The original definitions are ``parametric'' in nature: Lusztig defined the \emph{Lusztig positive} (resp.\ \emph{Lusztig nonnegative}) part $G^{>0}$ (resp.\ $G^{\geq 0}$) of $G$ as a semigroup in $G$ generated by certain elements (see Section~\ref{subsect:Luspositivity}).
For a parabolic subgroup $P\subset G$, the Lusztig positive (resp.\ Lusztig nonnegative) part $(G/P)^{>0}$ (resp.\ $(G/P)^{\geq 0}$) of the partial flag variety $G/P$ is then defined as the image of (resp.\ the closure of the image of) $G^{>0}$ under the projection map $G\to G/P$.  Marsh and Rietsch gave a combinatorial parametrization of $(G/P)^{\geq 0}$ in terms of its Deodhar cells \cite{marshrietsch2004}.

\medskip
On the other hand, one may seek an ``implicit'' description of Lusztig positivity for $G/P$ in terms of positivity of suitably natural coordinates on $G/P$.
As a motivating example, consider the \emph{Grassmannian} $\operatorname{Gr}_{k;n}$ of $k$-dimensional subspaces in $\RR^n$.
Its Pl\"ucker coordinates allow one to consider the \emph{Pl\"ucker positive} (resp.\ \emph{nonnegative}) \emph{part} of $\operatorname{Gr}_{k;n}$, defined as
\[
\text{$\operatorname{Gr}_{k;n}^{\Delta > 0}$ (resp.\ $\operatorname{Gr}_{k;n}^{\Delta \geq 0}$)} := \left\{L \subseteq \RR^n \ \middle| \ \begin{matrix}\text{$L$ is the row-span of a real $k\times n$ matrix all of whose}\\  \text{maximal minors are positive (resp.\ nonnegative)}\end{matrix}\right\}.
\]
Lam \cite{Lam2014} and, independently, Talaska and Williams \cite{talaska2013network} showed that $\operatorname{Gr}_{k;n}^{> 0} = \operatorname{Gr}_{k;n}^{\Delta > 0}$ and $\operatorname{Gr}_{k;n}^{\geq 0} = \operatorname{Gr}_{k;n}^{\Delta \geq 0}$.
More generally, for a subset $K = \{k_1 < \dotsb < k_j\} \subseteq [n-1]$, one may consider the (type A) partial flag variety
\[
\operatorname{Fl}_{K ; n} := \{\text{flags of subspaces $L_\bullet = (L_1 \subseteq \dotsb \subseteq L_j)$ with $\dim L_i = k_i$ for all $i = 1, \dots, j$}\}.
\]
Under the natural embedding $\operatorname{Fl}_{K ; n} \hookrightarrow \prod_{i = 1}^j \operatorname{Gr}_{k_i;n}$,
its \emph{Pl\"ucker positive} (resp.\ \emph{nonnegative}) part is defined as the intersection
\[
\text{$\operatorname{Fl}_{K ; n}^{\Delta>0}$ (resp.\ $\operatorname{Fl}_{K ; n}^{\Delta \geq 0}$)} := \operatorname{Fl}_{K ; n} \cap  \prod_{i = 1}^j \operatorname{Gr}_{k_i;n}^{\Delta >0} \text{ (resp.\ $\prod_{i = 1}^j \operatorname{Gr}_{k_i;n}^{\Delta \geq 0}$)}.
\]
Bloch and Karp \cite{blochkarp2023} showed the following.  The second author independently showed a similar result in the case of $K =[n-1]$ \cite{boretsky2023totally}.

\begin{thm}\cite[Theorem 1.1]{blochkarp2023}\label{thm:BK}
The following are equivalent for a subset $K\subseteq[n-1]$:
\begin{enumerate}[label = (\arabic*)]
\item $\operatorname{Fl}_{K ; n}^{>0} = \operatorname{Fl}_{K; n}^{\Delta>0}$,
\item $\operatorname{Fl}_{K; n}^{\geq 0} = \operatorname{Fl}_{K; n}^{\Delta\geq 0}$, and
\item $K$ consists of consecutive integers.
\end{enumerate}
\end{thm}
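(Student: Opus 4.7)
My plan is to handle the equivalence $(1) \Leftrightarrow (2)$ topologically, and then argue the equivalence with (3) by splitting into the two implications. For $(1) \Leftrightarrow (2)$: both $\operatorname{Fl}_{K;n}^{\geq 0}$ and $\operatorname{Fl}_{K;n}^{\Delta\geq 0}$ are the closures of their strict counterparts---the Lusztig side via Marsh--Rietsch's cell decomposition, the Pl\"ucker side via continuity of the Pl\"ucker coordinates---and each strict part is the relative interior of its closure, so equality of the open pair is equivalent to equality of the closed pair. Moreover, applying the Lam/Talaska-Williams theorem to each Grassmannian factor gives $\operatorname{Fl}_{K;n}^{>0} \subseteq \operatorname{Fl}_{K;n}^{\Delta>0}$ and $\operatorname{Fl}_{K;n}^{\geq 0} \subseteq \operatorname{Fl}_{K;n}^{\Delta\geq 0}$ for \emph{any} $K$, since the projection of a Lusztig positive matrix to any Grassmannian factor is Pl\"ucker positive. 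So the content of the theorem is to characterize when the reverse containment holds.

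For $(3) \Rightarrow$ equality, assume $K = \{a, a+1, \dotsc, b\}$ is consecutive. My approach is to show every Pl\"ucker positive $K$-flag admits a Pl\"ucker positive refinement to a full flag in $\operatorname{Fl}_{[n-1];n}^{\Delta > 0}$, and then invoke the full-flag case ($K = [n-1]$, itself consecutive), which I would treat as a separate base case---either proving it directly or citing \cite{boretsky2023totally}. For the refinement I would (i) extend below $L_a$ by writing $L_a$ as the row span of a totally positive $a \times n$ matrix and taking $L_i$ ($i < a$) to be the span of its first $i$ rows, and (ii) extend above $L_b$ dually via a totally positive quotient map. Both constructions produce Pl\"ucker positive subspaces with the required nesting. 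Within $L_a \subset \dotsb \subset L_b$ no interpolation is needed, since all dimensions between $a$ and $b$ are already present in $L_\bullet$---this vacuity is precisely where consecutiveness of $K$ is used.

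For $\neg(3) \Rightarrow$ strict containment, choose $k_i, k_{i+1} \in K$ with $k_{i+1}-k_i \geq 2$. Any Lusztig positive $K$-flag arises as $g \cdot P_K$ for some $g \in G^{>0}$, and the same $g$ also produces a Lusztig positive flag $g \cdot P_{K \cup \{k_i+1\}}$ whose $(k_i+1)$-dimensional component is Pl\"ucker positive by Lam/Talaska-Williams. So any Lusztig positive $K$-flag admits a Pl\"ucker positive intermediate $(k_i+1)$-subspace between $L_{k_i}$ and $L_{k_{i+1}}$. I would therefore construct a Pl\"ucker positive $K$-flag for which no such intermediate exists---concretely, by searching small examples like $n=4$, $K=\{1,3\}$, where the family of intermediate planes forms a $\PP^1$ and Pl\"ucker positivity can be checked in closed form. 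The main obstacle I anticipate is the refinement step (i)--(ii) above: ensuring that the totally positive extensions at each end are \emph{compatible} with the given $L_\bullet$ in the middle, so that the resulting full flag is globally Pl\"ucker positive rather than merely Pl\"ucker positive in pieces.
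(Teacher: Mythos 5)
This theorem is not proved in the paper---it is quoted from Bloch--Karp~\cite{blochkarp2023}, with the full-flag case also credited independently to Boretsky~\cite{boretsky2023totally}---so there is no in-paper proof to compare against directly. The closest analogue is the paper's proof of its own Theorem~A (types B/C), which runs the cycle $(3)\Rightarrow(1)\Rightarrow(2)\Rightarrow(3)$ rather than establishing $(1)\Leftrightarrow(2)$ as a separate topological equivalence. Your outline is structurally in the same family, with one genuine gap and one unfounded worry, which I flag below.

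The gap is in $(2)\Rightarrow(1)$. Your argument rests on the claim that each of $\operatorname{Fl}_{K;n}^{>0}$ and $\operatorname{Fl}_{K;n}^{\Delta>0}$ is the relative interior of its closure. For the Pl\"ucker side this is not automatic: a priori a point with some $\Delta_S = 0$ could still be an interior point of $\operatorname{Fl}_{K;n}^{\Delta\geq 0}$ if $\Delta_S$ has a degenerate local minimum there, and ruling this out requires an actual argument. The way $(2)\Rightarrow(1)$ is normally discharged is via the cell decomposition: given $L_\bullet \in \operatorname{Fl}_{K;n}^{\Delta>0} \subseteq \operatorname{Fl}_{K;n}^{\Delta\geq 0} = \operatorname{Fl}_{K;n}^{\geq 0}$ (using (2)), the fact that \emph{all} Pl\"ucker coordinates of $L_\bullet$ are nonzero forces $L_\bullet$ into the unique top Deodhar/positroid cell, which is $\operatorname{Fl}_{K;n}^{>0}$. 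The paper's cyclic structure $(3)\Rightarrow(1)\Rightarrow(2)\Rightarrow(3)$ sidesteps this entirely: one only needs the easy direction $(1)\Rightarrow(2)$ (close up, using the Cauchy--Binet curve argument as in Proposition~\ref{prop:dagger}(b)), and $(2)\Rightarrow(1)$ comes for free once the cycle closes. You could adopt that structure and delete the relative-interior claim.

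Your anticipated obstacle in $(3)\Rightarrow(1)$---compatibility of the totally positive extensions below $L_a$ and above $L_b$---is not actually an issue. Pl\"ucker positivity of a flag is a \emph{coordinatewise} condition on the individual subspaces (by definition $\operatorname{Fl}_{K;n}^{\Delta>0} = \operatorname{Fl}_{K;n}\cap\prod_i\operatorname{Gr}_{k_i;n}^{\Delta>0}$), so the choices of $L_1\subset\cdots\subset L_{a-1}$ and of $L_{b+1}\subset\cdots\subset L_{n-1}$ constrain disjoint sets of indices and are completely independent. Extending below $L_a$ via Lam/Talaska--Williams and above $L_b$ by the dual construction (using that $L$ and $L^\perp$ have the same Pl\"ucker coordinates up to a global sign, the type A version of Lemma~\ref{lem:pinning2}) produces a valid point of $\operatorname{Fl}_{[n-1];n}^{\Delta>0}$, and then the $K=[n-1]$ base case finishes. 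You do, however, need that base case as a genuine external input---citing \cite{boretsky2023totally} is fine, whereas citing Bloch--Karp here would be circular. For $\neg(3)$ your sketch is the right shape (exhibit a Pl\"ucker nonnegative flag with no Pl\"ucker nonnegative refinement to the next level, so it cannot be Lusztig nonnegative), but it is only a plan until the explicit counterexample is produced and verified.
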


In summary, these results establish the coincidence of Lusztig's positivity and Pl\"ucker positivity for Grassmannians, and more generally for partial flag varieties $\operatorname{Fl}_{K;n}$ with consecutive $K$.
This coincidence supports the rich interaction between matroid theory, tropical geometry, and total positivity \cite{ardilarinconwilliams, postnikov2006, SpeyerWilliams2005, SpeyerWilliams2021, boretskyeurwilliams, JLLO, PSBW23}.
Here, with a view towards the theory of Coxeter matroids \cite{BGW2003} and their tropical geometry \cite{Rincon2011, BallaOlarte2023}, we characterize the partial flag varieties of the symplectic group $\operatorname{Sp}_{2n}$ (type C) and the odd-orthogonal group $\operatorname{SO}_{2n+1}$ (type B) for which Lusztig's positivity coincides with Pl\"ucker positivity.

\medskip
Let $\be_i$ denote the $i$-th standard basis vector in a coordinate space, and $\be_i^*$ its dual.
For type C, endow $\RR^{2n}$ with the symplectic bilinear form $\omega = \sum_{i = 1}^{n} (-1)^i \be_i^* \wedge \be_{2n+1 - i}^*$.
For type B, endow $\RR^{2n+1}$ with the symmetric bilinear form $Q = \sum_{i = 1}^{n+1} (-1)^i \be_i^* \cdot \be_{2n+2 - i}^*$.
Let $\operatorname{Sp}_{2n}$ and $\operatorname{SO}_{2n+1}$ be the linear groups preserving the bilinear forms $\omega$ and $Q$, respectively.
Explicitly, we have
\begin{align*}
\mathrm{Sp}_{2n}(\RR) & :=\{A\in \mathrm{SL}_{2n}(\RR)| A^tEA = E\},\quad\text{where\quad}E=\left[\begin{smallmatrix}
    &&&&1\\&&&-1&\\&&\iddots&&\\&1&&&\\-1&&&&
\end{smallmatrix}\right], \quad\text{and}\\
\operatorname{SO}_{2n+1}(\RR) &:= \{A\in \mathrm{SL}_{2n+1}(\RR) | A^tE'A = E'\},\quad\text{where}\quad E'= \begin{bmatrix} & -1 \\ E & \end{bmatrix} = \left[\begin{smallmatrix}
    &&&&-1\\&&&1&\\&&\iddots&&\\&1&&&\\-1&&&&
\end{smallmatrix}\right].
\end{align*}
The additional choice of pinnings required for defining their Lusztig positive parts $\operatorname{Sp}_{2n}^{>0}$ and $\operatorname{SO}_{2n+1}^{>0}$ is described in Section~\ref{sect:pinning}. Recall that a subspace of a vector space with a symmetric or alternating form is \emph{isotropic} if the restriction of the form to the subspace is trivial. The partial flag varieties of these groups have the following description in terms of isotropic subspaces (see Section~\ref{sect:partialflag} for details).

For $K\subseteq [n]$, let
\begin{align*}
\operatorname{SpFl}_{K;2n} &:= \{ L_\bullet \in \operatorname{Fl}_{K;2n} : \text{each subspace $L_i$ in the flag $L_\bullet$ is isotropic with respect to $\omega$}\}, \quad\text{and}\\
\operatorname{SOFl}_{K;2n+1} &:= \{ L_\bullet \in \operatorname{Fl}_{K;2n+1} : \text{each subspace $L_i$ in the flag $L_\bullet$ is isotropic with respect to $Q$}\}.
\end{align*}
We define their \emph{Pl\"ucker positive} (resp.\ \emph{nonnegative}) parts as the intersections
\begin{align*}
\operatorname{SpFl}_{K;2n}^{\Delta >0}\text{ (resp.\ $\operatorname{SpFl}_{K; 2n}^{\Delta \geq 0}$)} &:= \operatorname{SpFl}_{K;2n} \cap \operatorname{Fl}_{K; 2n}^{\Delta > 0} \text{ (resp.\ $\operatorname{Fl}_{K; 2n}^{\Delta \geq 0}$)}, \quad\text{and}\\
\operatorname{SOFl}_{K;2n+1}^{\Delta >0}\text{ (resp.\ $\operatorname{SOFl}_{K; 2n+1}^{\Delta \geq 0}$)} &:= \operatorname{SOFl}_{K;2n+1} \cap \operatorname{Fl}_{K; 2n+1}^{\Delta > 0} \text{ (resp.\ $\operatorname{Fl}_{K; 2n+1}^{\Delta \geq 0}$)}.
\end{align*}
Our main theorem is as follows.

\begin{maintheorem}\label{thm:main}
In type C, for $n \geq 2$ and a subset $K\subseteq [n]$, the following are equivalent:
\begin{enumerate}[label = (\arabic*)]
\item\label{mainthm:pos} $\operatorname{SpFl}_{K ; 2n}^{>0} = \operatorname{SpFl}_{K; 2n}^{\Delta>0}$,
\item\label{mainthm:nonneg} $\operatorname{SpFl}_{K; 2n}^{\geq 0} = \operatorname{SpFl}_{K; 2n}^{\Delta\geq 0}$, and
\item\label{mainthm:consec} $K = \{k, k +1, \dotsc, n\}$ for some $1\leq k\leq n$.
\end{enumerate}
In type B, for $n \geq 3$ and a subset $K\subseteq [n]$, the following are equivalent:
\begin{enumerate}[label = (\arabic*)]
\item $\operatorname{SOFl}_{K ; 2n+1}^{>0} = \operatorname{SOFl}_{K; 2n+1}^{\Delta>0}$,
\item $\operatorname{SOFl}_{K; 2n+1}^{\geq 0} = \operatorname{SOFl}_{K; 2n+1}^{\Delta\geq 0}$, and
\item $K = \{k, k +1, \dotsc, n\}$ for some $1\leq k\leq n$.
\end{enumerate}
In type B, when $n=2$, the statements (1) and (2) hold for all partial flag varieties $\operatorname{SOFl}_{K;5}$.
\end{maintheorem}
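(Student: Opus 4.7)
The plan is to prove the type C and type B statements in parallel by reducing to Bloch--Karp's type A result (Theorem~\ref{thm:BK}) via a ``perpendicular extension'' embedding into a type A flag variety. The containments $(G/P)^{\geq 0} \subseteq (G/P)^{\Delta \geq 0}$ and $(G/P)^{>0} \subseteq (G/P)^{\Delta > 0}$ hold in both types because $\operatorname{Sp}_{2n}^{>0} \subseteq \operatorname{SL}_{2n}^{>0}$ and $\operatorname{SO}_{2n+1}^{>0} \subseteq \operatorname{SL}_{2n+1}^{>0}$ under the standard embeddings (once the pinnings of Section~\ref{sect:pinning} are chosen compatibly), combined with the Lam / Talaska--Williams description of Pl\"ucker positivity for Grassmannians. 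The substantive content is therefore the reverse inclusion when (3) holds, and the construction of counterexamples when it fails.

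For $(3) \Rightarrow (1),(2)$, I would use the map $\operatorname{SpFl}_{K;2n} \hookrightarrow \operatorname{Fl}_{\widetilde K;2n}$ that extends an isotropic flag $L_\bullet$ by appending the perpendiculars $L_i^\perp$, producing a type A flag of type $\widetilde K = K \cup \{2n - i : i \in K,\ i < n\}$. When $K = \{k, k+1, \ldots, n\}$ the extended type $\widetilde K = \{k, k+1, \ldots, 2n-k\}$ is a consecutive interval, and analogously $\widetilde K = \{k, \ldots, 2n+1-k\}$ in type B is also consecutive. A careful sign computation using the antidiagonal forms $E$ and $E'$ should show that each Pl\"ucker coordinate $p_I(L_i^\perp)$ agrees up to a fixed sign with the complementary Pl\"ucker coordinate of $L_i$, so that Pl\"ucker positivity of $L_\bullet$ lifts to Pl\"ucker positivity of the extension. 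Bloch--Karp then yields type A Lusztig positivity of the extended flag, and the final step is to transfer this back to type C/B Lusztig positivity of $L_\bullet$ via Marsh--Rietsch's Deodhar cell parametrization, together with the folding of Weyl groups ($A_{2n-1} \to C_n$ and $A_{2n} \to B_n$); this last step is the main obstacle, since folding does not obviously preserve the Lusztig semigroup structure and requires matching specific reduced-word and positive-subexpression data on the two sides, using the diagram-automorphism-fixed cells on the $A$-side to identify them with the Deodhar cells on the $B$- or $C$-side.

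For the converse, I would exhibit explicit Pl\"ucker positive flags that fail to be Lusztig nonnegative whenever $K$ is not of the form $\{k, \ldots, n\}$. A projection-based reduction along $\operatorname{SpFl}_{K;2n} \to \operatorname{SpFl}_{K';2n}$ for $K' \subseteq K$ lets me restrict to minimal bad $K$, which splits into two subcases: $K$ contains a gap below $n$, or $K$ is a consecutive interval bounded strictly above by $n$. In each subcase I would construct a test flag invariant under the appropriate symmetry whose extended Pl\"ucker coordinates are all positive but which lies in a non-top Deodhar cell under the Marsh--Rietsch parametrization, obstructing membership in the Lusztig positive part. For the exceptional $n=2$ type B case, I would exploit the isomorphism $B_2 \cong C_2$ to identify $\operatorname{SOFl}_{\{1\};5}$ with $\operatorname{SpFl}_{\{2\};4}$ and deduce the statement from the $n=2$ type C case already established; the lone subtlety is that Pl\"ucker coordinates on the two sides correspond to different representations (standard vector versus spin), so compatibility of the positive parts must be checked by hand.
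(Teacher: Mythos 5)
Your high-level outline correctly identifies the perpendicular-extension embedding into a type A flag variety and the appeal to Bloch--Karp as the structural backbone; this matches the paper's strategy for $(3)\Rightarrow(1)\Rightarrow(2)$. However, there are three genuine gaps.

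First, for $(3)\Rightarrow(1)$ you correctly identify the transfer of type A Lusztig positivity of the extended flag back to type C/B Lusztig positivity of $L_\bullet$ as ``the main obstacle,'' but you do not actually supply an argument. The paper's main-body proof sidesteps cell-matching entirely: Proposition~\ref{prop:strongdagger} shows, via the strengthened condition \hyperref[condition1strong]{($\dagger1'$)}, that $U := \operatorname{image}(\mathring{\mathbf y}_{\mathbf i}^\Phi)$ is a \emph{closed} and \emph{dense} subset of $(G/B_+)\cap\operatorname{image}(\mathring{\mathbf y}_{\psi(\mathbf i)}^A)$, hence equal to it; positivity then falls out because the folding maps $f_j$ are sign-preserving. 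Your proposed route via diagram-automorphism-fixed Deodhar cells is essentially Proof~1 in the appendix, but that route requires establishing that $\psi$ preserves reducedness and distinguishedness of subexpressions (properties \hyperref[condition2]{($\ddagger1$)}--\hyperref[condition2]{($\ddagger3$)}), none of which is in your sketch.

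Second, the $n=2$ type B case cannot be reduced to type C via the exceptional isomorphism $B_2\cong C_2$ as you suggest. That isomorphism identifies $\operatorname{Spin}_5$ with $\operatorname{Sp}_4$, under which the vector representation of $\operatorname{SO}_5$ corresponds to the $5$-dimensional representation $\Lambda^2_0$ of $\operatorname{Sp}_4$, not its standard representation. So ``Pl\"ucker positivity'' means structurally different things on the two sides, and there is no a priori reason the positive loci should match (indeed, Section~\ref{sect:typeD} of the paper shows that swapping the vector representation for a spin-type representation \emph{changes} the answer). The paper instead handles $B_2$ by an explicit two-line computation (Remark~\ref{rem:B2}) exhibiting a Pl\"ucker positive extension of any isotropic line.

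Third, for $(2)\Rightarrow(3)$, your proposed mechanism---exhibiting a Pl\"ucker \emph{positive} flag that ``lies in a non-top Deodhar cell''---is incoherent: a flag with all Pl\"ucker coordinates strictly positive sits in the intersection of the big Schubert and big opposite Schubert cells, hence in the unique top Deodhar cell of any Deodhar decomposition. The paper's actual argument is different: exhibit a Pl\"ucker \emph{nonnegative} flag of type $K$ that does not extend to a Pl\"ucker nonnegative complete isotropic flag; since the Lusztig nonnegative part of $G/P_J$ is the projection of $(G/B_+)^{\geq 0}$, and since for $K=[n]$ (already proved) $(G/B_+)^{\geq 0} = (G/B_+)^{\Delta\geq 0}$, such a non-extendable flag cannot be Lusztig nonnegative. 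This is elementary and avoids Deodhar combinatorics entirely.
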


In our proof of \Cref{thm:main}, we identify a general condition (see  \hyperref[condition1]{($\dagger$)} in Definition~\ref{condition1}) that implies \ref{mainthm:pos}$\implies$\ref{mainthm:nonneg}, and show that a mild strengthening of the condition (see \hyperref[prop:strongdagger]{($\dagger1'$)}) implies \ref{mainthm:consec}$\implies$\ref{mainthm:pos} for the $K = [n]$ case.
In contrast, we show that for $\operatorname{SO}_{2n}$ (type D), there exists no pinning for which the general condition \hyperref[condition1]{($\dagger$)} is satisfied (Proposition~\ref{prop:typeDpinning}).

\subsection*{Previous works}
Karpman showed that the statements (1) and (2) hold for Lagrangian Grassmannians, i.e.\ $\operatorname{SpFl}_{n;2n}$ \cite{karpman2018}. Theorem A implies that the methods there cannot generalize to $\operatorname{SpFl}_{k;2n}$ for $k\neq n$. For an explanation of why, see Remark~\ref{rem:karpman}.

For a general reductive ($\RR$-split) algebraic group $G$ of simply-laced type, Lusztig showed that Lusztig positivity for a partial flag variety $G/P$ coincides with positivity of the coordinates from the canonical basis of a sufficiently large irreducible representation of $G$ \cite{lusztig1998positivity}.
However, due to the ``sufficiently large'' condition, this does not recover any of the aforementioned results of Lam, Talaska--Williams, Bloch--Karp, or Karpman.
Chevalier \cite[Example 10.2]{chevaliercounterexample} gave an example showing that the ``sufficiently large'' condition cannot be removed; we verify the example explicitly in Section~\ref{sect:typeD}.

\subsection*{Organization}
\Cref{sect:pinning} provides background on pinnings and establishes the conventions.
\Cref{sec:compatibility} describes how the pinnings for $\operatorname{Sp}_{2n}$ and $\operatorname{SO}_{2n+1}$ are compatible with the standard pinnings of $GL_{2n}$ and $GL_{2n+1}$.
\Cref{sect:Luspositivity} defines Lusztig positivity and nonnegativity, and proves the implications \ref{mainthm:consec}$\implies$\ref{mainthm:pos}$\implies$\ref{mainthm:nonneg} in \Cref{thm:main}. \Cref{sect:constructions} provides explicit examples that establish \ref{mainthm:nonneg}$\implies$\ref{mainthm:consec}, thereby completing the proof of \cref{thm:main}. \Cref{sect:typeD} discusses difficulties that arise for flag varieties of type D.
\Cref{sect:MRparameterization} presents two alternate proofs of the implication \ref{mainthm:consec}$\implies$\ref{mainthm:pos} in \Cref{thm:main} by establishing further properties of the embeddings $\operatorname{Sp}_{2n}\hookrightarrow GL_{2n}$ and $\operatorname{SO}_{2n+1} \hookrightarrow GL_{2n+1}$ that may be of independent interest.

\subsection*{Acknowledgements}
The authors would like to thank Pavel Galashin, Steven Karp, Konstanze Rietsch, and Lauren Williams for helpful communications regarding this work. The first, second, and fourth authors were supported in part by NSF grant DMS-2152991. The second author was also supported by the Natural Sciences and Engineering Research Council of Canada (NSERC). Le deuxième auteur a été aussi financé par le Conseil de recherches
en sciences naturelles et en génie du Canada (CRSNG) [Ref. no. 557353-2021]. The third author is supported by NSF grant DMS-2246518.

\section{Pinnings}\label{sect:pinning}

Let $G$ be a connected, reductive, $\RR$-split linear algebraic group.  We often identify $G$ with its $\RR$-valued points.
A \emph{pinning} of $G$ is an additional set of choices for $G$ that is part of the input data for the definition of Lusztig positivity for $G$.
We set up notations, and describe our choice of pinnings for $\operatorname{Sp}_{2n}$ and $\operatorname{SO}_{2n+1}$ in this section.

\subsection{Generalities and notations}
Fix a split maximal torus $T$ in $G$, and let $X$ be the character lattice of $T$.
Let $\Phi \subset X$ be the set of roots of the corresponding root system.
Fix a system of positive roots $\Phi^+$, and let $B_+$ be the corresponding Borel subgroup of $G$.
Let $B_-$ be the opposite Borel subgroup such that $B_+\cap B_{-} =T$.
Let $U_+$ and $U_-$ be the unipotent radicals of $B_+$ and of $B_-$, respectively.
Let $I$ be an indexing set for the set $\{\alpha_i : i\in I\}$ of simple roots in $\Phi^+$.
For every $i\in I$, fix a homomorphism $\phi_i:SL_2\rightarrow G$ such that in the induced map $\mathfrak{sl}_2 \to \mathfrak g$ of Lie algebras, the element $\left[\begin{smallmatrix} 0 & 1\\ 0 & 0\end{smallmatrix}\right]\in \mathfrak{sl}_2$ maps to a generator of the root space in $\mathfrak g$ of weight $\alpha_i$.
We then define homomorphisms $x_i: \RR \to U_+$, $y_i: \RR \to U_-$, and $\chi_i: \RR^* \to T$ by 
\[
x_i(m) :=\phi_i\left(\begin{bmatrix}
        1 & m\\
        0 & 1
    \end{bmatrix}\right),
\quad
y_i(m) :=\phi_i\left(\begin{bmatrix}
        1 & 0\\
        m & 1
    \end{bmatrix}\right),
\quad\text{and}\quad
\chi_i(t) := \phi_i\left(\begin{bmatrix}
        t & 0\\
        0 & t^{-1}
    \end{bmatrix}\right).
\]
One may observe that the choices made so far for the triple ($T$, $B_+$, $\{\phi_i\}_{i\in I}$) is equivalent to a choice of a set $\{(e_i,f_i)\}_{i\in I}$ of Chevalley generators of the Lie algebra $\mathfrak g$.

\begin{defn}
The data $(T, B_+, B_-, \{x_i\}_{i\in I}, \{y_i\}_{i\in I})$ is called a \emph{pinning} for $G$.
\end{defn}

When multiple groups are in play, we write superscripts of the root system name, for example $T^{\Phi}$, $s_i^\Phi$, and $y_i^\Phi$, to distinguish between the notations for pinnings of different groups.

\medskip
A pinning of $G$ identifies the reflection group $W$ of the root system $\Phi$ with the Weyl group $N_G(T)/T$, as follows.
For each $i\in I$, the simple reflection $s_i\in W$ is identified with $\dot{s}_iT$ where
\[
\dot{s}_i := \phi_i\left(\left[\begin{matrix}
    0 & -1 \\
    1 & 0
\end{matrix}\right]\right).
\]
Given an expression $\mathbf{w}=s_{i_1}s_{i_2}\cdots s_{i_l}$, we denote $\dot{\mathbf{w}}=\dot{s_{i_1}}\dot{s_{i_2}}\cdots \dot{s_{i_l}}$. 

\medskip
For a sequence $\mathbf i = (i_1, \dotsc, i_\ell)$ with entries in the indexing set $I$ of the simple roots, we denote by $\mathbf s_{\mathbf i}$ the element
\[
\mathbf s_{\mathbf i} := s_{i_1}\dotsm s_{i_\ell} \in W.
\]
When clear from context, we use $\mathbf s_{\mathbf i}$ to denote also the word $(s_{i_1}, \dotsc, s_{i_\ell})$.
Define the function $\mathbf y_{\mathbf i}: \RR^{\ell} \to G$ by
\[
\mathbf y_{\mathbf i}(a_1, \dotsc, a_\ell) := y_{i_1}(a_1) \dotsm y_{i_\ell}(a_\ell),
\]
and similarly define $\mathbf x_{\mathbf i}$, $\boldsymbol \chi_{\mathbf i}$, and $\dot{\mathbf s}_{\mathbf i}$.
The length $\ell$ of the sequence $\mathbf i$ is denoted $|\mathbf i|$.

\medskip
In type $A_{n-1}$, when $G=GL_n$, we use the \emph{standard pinning} $(T^{A}, B_+^{A}, B_-^{A}, \{x_i^{A}\}_{i\in [n-1]}, \{y_i^{A}\}_{i\in [n-1]})$, defined as follows.
The torus $T^{A}$ consists of diagonal matrices with non-zero entries on the diagonal. The Borels $B_+^{A}$ and $B_-^{A}$ consist of upper and lower triangular invertible matrices, respectively. The set of simple roots is $\{\be_1-\be_2, \dotsc, \be_{n-1} -\be_n\}$. Accordingly, for each $i\in [n-1]$, the maps $\phi_i^{A}$ are given by
\begin{equation*}
    \phi_i^{A}\left(\begin{pmatrix}
        a & b\\
        c&d
    \end{pmatrix}\right) :=\begin{blockarray}{ccccccc}
& &  & i & i+1 & & \\
\begin{block}{c(cccccc)}
  & 1 &  &  & & & \\
  & & \ddots &  & &  & \\
  i& &  & a & b &  &  \\
  i+1& &  &c  & d &  &  \\
  & &  &  &  & \ddots  & \\
  &&&&&&1\\ 
\end{block} 
\end{blockarray}\hspace{5pt}, 
\end{equation*}
where unmarked off-diagonal matrix entries are $0$.  The Weyl group is the permutation group $\mathfrak S_n$ on $[n]$ with $s_i^A$ the transposition $(i\ i+1)$.

\subsection{Pinnings of \texorpdfstring{$\operatorname{Sp}_{2n}$}{Sp2n} and \texorpdfstring{$\operatorname{SO}_{2n+1}$}{SO2n+1}}
We provide explicit descriptions of the pinnings of $\operatorname{Sp}_{2n}$ and $\operatorname{SO}_{2n+1}$ in \S\ref{sssec:Cpinning} and \S\ref{sssec:Bpinning}, respectively.
One may verify that they are indeed valid pinnings from \cite{BilleyLakshmibai}, which provides explicit descriptions of Chevalley generators of the Lie algebras $\mathfrak{sp}_{2n}$ and $\mathfrak{so}_{2n+1}$.
We record the key properties of these pinnings that we will use in Section~\ref{sec:compatibility}.

\subsubsection{Type C pinning}\label{sssec:Cpinning}
The pinning $(T^{C_{n}}, B_+^{C_{n}}, B_-^{C_{n}}, \{x_i^{C_{n}}\}_{i\in [n]}, \{y_i^{C_{n}}\}_{i\in [n]})$ of $\operatorname{Sp}_{2n}$ is defined as follows.
The torus $T^{C_{n}}$ consists of matrices 

\begin{equation*}
\begin{pmatrix}
    t_1&&&&&&&\\
    &t_2&&&&&&\\
    &&\ddots&&&&&\\
    &&&t_n&&&&\\
    &&&&t_n^{-1}&&&\\
    &&&&&\ddots&&\\
    &&&&&&t_2^{-1}&\\
    &&&&&&& t_1^{-1}
\end{pmatrix},
\end{equation*}
where $t_i\in \mathbb{R}^*$ for $i\in [n]$ and all off-diagonal entries are $0$.
The Borels $B_+^{C_{n}}$ and $B_-^{C_{n}}$ consist of upper and lower triangular matrices in $\operatorname{Sp}_{2n}$, respectively, with nonzero entries on the diagonal. The set of simple roots is $\{\be_1-\be_2, \dotsc, \be_{n-1}-\be_n, 2\be_n\}$. Accordingly, for $i\in [n-1]$, the map $\phi_i^{C_{n}}$ is given by
\begin{equation*}
    \phi_i^{C_{n}}\left(\begin{pmatrix}
        a&b\\
        c&d
    \end{pmatrix}\right)=\begin{blockarray}{cccccccccc}
& &  & i & i+1 & & 2n-i& 2n-i+1 & &\\
\begin{block}{c(ccccccccc)}
  & 1 &  &  & & & & & &\\
  & & \ddots &  & &  & & & & \\
  i& &  & a & b &  & & & & \\
  i+1& &  &c  & d &  &  & & & \\
  & &  &  &  & \ddots  & & & &  \\
  2n-i&&&&&&a&b&&\\
  2n-i+1&&&&&&c&d&&\\
 &&&&&&&&\ddots&\\
 &&&&&&&&&1\\
\end{block} 
\end{blockarray}\hspace{5pt}, 
\end{equation*}
where all unmarked off-diagonal entries are $0$.
For $i=n$, the map $\phi_n^{C_n}$ is given by
\begin{equation*}
    \phi_n^{C_{n}}\left(\begin{pmatrix}
        a&b\\
        c&d
    \end{pmatrix}\right)=\begin{blockarray}{ccccccc}
& &  & n & n+1 & & \\
\begin{block}{c(cccccc)}
  & 1 &  &  & & & \\
  & & \ddots &  & &  & \\
  n& &  & a & b &  &  \\
  n+1& &  &c  & d &  &  \\
  & &  &  &  & \ddots  & \\
  &&&&&&1\\ 
\end{block} 
\end{blockarray}\hspace{5pt}, 
\end{equation*}
where all unmarked off-diagonal entries are $0$.

\subsubsection{Type B pinning}\label{sssec:Bpinning}

The pinning $(T^{B_{n}}, B_+^{B_{n}}, B_-^{B_{n}}, \{x_i^{B_{n}}\}_{i\in [n]}, \{y_i^{B_{n}}\}_{i\in [n]})$ of $\operatorname{SO}_{2n+1}$ is defined as follows.
The torus $T^{B_{n}}$ consists of matrices 
\begin{equation*}
\begin{pmatrix}
    t_1&&&&&&&&\\
    &t_2&&&&&&&\\
    &&\ddots&&&&&&\\
    &&&t_n&&&&&\\
    &&&&1&&&&\\
    &&&&&t_n^{-1}&&&\\
    &&&&&&\ddots&&\\
    &&&&&&&t_2^{-1}&\\
    &&&&&&&& t_1^{-1}
\end{pmatrix},
\end{equation*}
\noindent where $t_i\in \mathbb{R}^*$ for $i\in [n]$ and all off-diagonal entries are $0$.
The Borels $B_+^{B_{n}}$ and $B_-^{B_{n}}$ consist of upper and lower triangular matrices in $\operatorname{SO}_{2n+1}$, respectively, with nonzero entries on the diagonal. The set of simple roots is $\{\be_1-\be_2, \dotsc, \be_{n-1}-\be_n, \be_n\}$.
Accordingly, for $i\in [n-1]$, the map $\phi_i^{B_{n}}$ is given by

\begin{equation*}
    \phi_i^{B_{n}}\left(\begin{pmatrix}
        a&b\\
        c&d
    \end{pmatrix}\right)=\begin{blockarray}{cccccccccc}
& &  & i & i+1 & & 2n-i+1& 2n-i+2 & &\\
\begin{block}{c(ccccccccc)}
  & 1 &  &  & & & & & &\\
  & & \ddots &  & &  & & & & \\
  i& &  & a & b &  & & & & \\
  i+1& &  &c  & d &  &  & & & \\
  & &  &  &  & \ddots  & & & &  \\
  2n-i+1&&&&&&a&b&&\\
  2n-i+2&&&&&&c&d&&\\
 &&&&&&&&\ddots&\\
 &&&&&&&&&1\\
\end{block} 
\end{blockarray}\hspace{5pt}, 
\end{equation*}
where all unmarked off-diagonal entries are $0$.
For $i=n$, the map $\phi_n^{B_n}$ is determined by 

\begin{equation*}
    \phi_n^{B_{n}}\left(\begin{pmatrix}
        t&0\\
        0&t^{-1}
    \end{pmatrix}\right)=\begin{blockarray}{cccccccc}
& &  & n & n+1 & n+2 & & \\
\begin{block}{c(ccccccc)}
  & 1 &  &  & & & & \\
  & & \ddots &  & & &  & \\
  n& &  & t^2 &  & &  &  \\
  n+1& &  &  & 1 &  & &  \\
  n+2 & & & & & t^{-2} & \\
  & &  &  & & & \ddots  & \\
  &&&&&&&1\\ 
\end{block} 
\end{blockarray}\hspace{5pt},
\end{equation*}

\begin{equation*}
    \phi_n^{B_{n}}\left(\begin{pmatrix}
        1&m\\
        0&1
    \end{pmatrix}\right)=\begin{blockarray}{cccccccc}
& &  & n & n+1 & n+2 & & \\
\begin{block}{c(ccccccc)}
  & 1 &  &  & & & & \\
  & & \ddots &  & & &  & \\
  n& &  & 1 & \sqrt{2}m & m^2&  &  \\
  n+1& &  &  & 1 & \sqrt{2}m & &  \\
  n+2 & & & & & 1 & \\
  & &  &  & & & \ddots  & \\
  &&&&&&&1\\ 
\end{block} 
\end{blockarray}\hspace{5pt},
\end{equation*}
and

\begin{equation*}
    \phi_n^{B_{n}}\left(\begin{pmatrix}
        1&0\\
        m&1
    \end{pmatrix}\right)=\begin{blockarray}{cccccccc}
& &  & n & n+1 & n+2 & & \\
\begin{block}{c(ccccccc)}
  & 1 &  &  & & & & \\
  & & \ddots &  & & &  & \\
  n& &  & 1 &  & &  &  \\
  n+1& &  & \sqrt{2}m & 1 & & &  \\
  n+2 & & & m^2 & \sqrt{2}m & 1 & \\
  & &  &  & & & \ddots  & \\
  &&&&&&&1\\ 
\end{block} 
\end{blockarray}\hspace{5pt}, 
\end{equation*}
where all unmarked off-diagonal entries are $0$.  

\section{Compatibility of pinnings}
\label{sec:compatibility}

We record here some key properties of the pinnings of $\operatorname{Sp}_{2n}$ and $\operatorname{SO}_{2n+1}$.
These properties describe how their pinnings are compatible with the standard pinnings of $GL_{2n}$ and $GL_{2n+1}$ under the embeddings $\operatorname{Sp}_{2n}\hookrightarrow GL_{2n}$ and $\operatorname{SO}_{2n+1}\hookrightarrow GL_{2n+1}$.
To avoid repeated arguments in this and subsequent sections, we shall often use notations as in the following general setup.

\begin{setup}\label{setup}
Let $G$ be a connected, reductive, $\RR$-split linear algebraic group with a fixed pinning $(T,B_+,B_-,\{x_i\}_{i\in I}, \{y_i\}_{i\in I})$ with simple roots $I$ in the root system $\Phi$.
Let $\iota: G \hookrightarrow GL_N$ be an embedding, and fix a function $\psi: I \to \{\text{nonempty sequences in $[N-1]$}\}$.  We write $\psi$ also for the function $\{\text{sequences in $I$}\} \to \{\text{sequences in $[N-1]$}\}$ defined by
\[
(i_1, \dotsc, i_\ell) \mapsto \big(\text{the concatenation of }\psi(i_1),\dotsc, \psi(i_\ell)\big).
\]
\end{setup}

For $\iota: \operatorname{Sp}_{2n} \hookrightarrow GL_{2n}$, we define $\psi(i) = (i,2n-i)$ for $i\in [n-1]$, and $\psi(n) = n$.
For $\iota: \operatorname{SO}_{2n+1} \hookrightarrow GL_{2n+1}$, we define $\psi(i) = (i,2n+1-i)$ for $i\in [n-1]$, and $\psi(n) = (n,n+1,n)$.
The following lemma is verified straightforwardly from the explicit descriptions of the pinnings of $\operatorname{Sp}_{2n}$ and $\operatorname{SO}_{2n+1}$.

\begin{lem}\label{lem:pinning1}
The pinning $(T^C, B_+^C, B_-^C, \{x_i^C\}_{i\in [n]}, \{y_i^C\}_{i\in [n]})$ of $\operatorname{Sp}_{2n}$ relates to the standard pinning of $SL_{2n}$ by $\chi_i^C(t) = \chi_i^A(t)\chi_{2n-i}^A(t)$ if $i\in [n-1]$ and $\chi_n^C(t) = \chi_n^A(t)$, and moreover,
\[
y_i^C(m) = \begin{cases}
y_i^A(m)y_{2n-i}^A(m) & \text{if $i\in [n-1]$}\\
y_n^A(m) & \text{if $i = n$}
\end{cases} \quad\text{and}\quad 
\dot s_i^C = \begin{cases}
\dot s_i^A \dot s_{2n-i}^A & \text{if $i\in [n-1]$}\\
\dot s_n^A & \text{if $i = n$}.
\end{cases}
\]

The pinning $(T^B, B_+^B, B_-^B, \{x_i^B\}_{i\in [n]}, \{y_i^B\}_{i\in [n]})$ of $\operatorname{SO}_{2n+1}$ relates to the standard pinning of $SL_{2n+1}$ by $\chi_i^B(t) = \chi_i^A(t)\chi_{2n+1-i}^A(t)$ if $i\in [n-1]$ and $\chi_n^B(t) = \chi_n^A(t^2)\chi_{n+1}^A(t^2)$, and moreover,
\[
y_i^B(m) = \begin{cases}
y_i^A(m)y_{2n+1-i}^A(m) & \text{if $i\in [n-1]$}\\
y_{n}^A(\frac{m}{\sqrt2})y_{n+1}^A(\sqrt{2}m) y_{n}^A(\frac{m}{\sqrt2})& \text{if $i = n$}
\end{cases} \quad\text{and}\quad 
\dot s_i^B = \begin{cases}
\dot s_i^A \dot s_{2n+1-i}^A & \text{if $i\in [n-1]$}\\
\dot s_{n}^A\dot s_{n+1}^A\dot s_{n}^A & \text{if $i = n$}.
\end{cases}
\]
\end{lem}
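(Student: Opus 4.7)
The plan is to verify the lemma by direct matrix computation, exploiting the explicit descriptions of the pinnings in Section~\ref{sssec:Cpinning} and Section~\ref{sssec:Bpinning}. Each identity to be checked is an equality of specific matrices in $GL_{2n}(\RR)$ or $GL_{2n+1}(\RR)$, and the right-hand sides involve products of elementary matrices in the standard type $A$ pinning whose entries are easy to compute. Since $\chi_i$, $y_i$, and $\dot{s}_i$ are defined from $\phi_i$ applied to the diagonal, lower-unitriangular, and Weyl elements of $SL_2$ respectively, it suffices to compare the corresponding matrices entry by entry.

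For type C: when $i \in [n-1]$, the matrices $y_i^A(m)$ and $y_{2n-i}^A(m)$ have supports in disjoint row/column ranges (one in the block $\{i,i+1\}$ with $i+1 \leq n$, the other in $\{2n-i,2n-i+1\}$ with $2n-i \geq n+1$), so their product is the identity plus entries $m$ at positions $(i+1,i)$ and $(2n-i+1,2n-i)$. This matches the matrix $\phi_i^{C_n}\!\left(\begin{smallmatrix} 1 & 0 \\ m & 1\end{smallmatrix}\right) = y_i^{C_n}(m)$ given in \S\ref{sssec:Cpinning}. The same disjoint-support observation gives the identities for $\chi_i^C$ and $\dot s_i^C$. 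For $i = n$, the formula for $\phi_n^{C_n}$ places the input $2\times 2$ block precisely at rows/columns $(n,n+1)$ with no secondary block, so $y_n^C(m) = y_n^A(m)$, $\chi_n^C(t) = \chi_n^A(t)$, and $\dot s_n^C = \dot s_n^A$ are immediate.

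For type B: the case $i \in [n-1]$ is analogous to type C, with $2n-i$ replaced by $2n+1-i$ because the ambient matrix has one more row/column. The case $i = n$ requires the only substantive calculation. Using the standard $3\times 3$ blocks at positions $(n, n+1, n+2)$, a direct multiplication gives
\[
y_n^A(a)\,y_{n+1}^A(b)\,y_n^A(c) \;=\; I + \text{(block with subdiagonal entries $a+c$, $b$ and $(n+2,n)$-entry $bc$)}.
\]
Setting $a = c = m/\sqrt{2}$ and $b = \sqrt{2}m$ produces subdiagonal entries $\sqrt{2}m$ and $(n+2,n)$-entry $m^2$, matching the displayed matrix for $\phi_n^{B_n}\!\left(\begin{smallmatrix}1 & 0 \\ m & 1\end{smallmatrix}\right)$. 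For $\chi_n^B(t) = \chi_n^A(t^2)\chi_{n+1}^A(t^2)$, the product of the two diagonal matrices has diagonal entries $t^2, 1, t^{-2}$ at positions $n, n+1, n+2$, matching $\phi_n^{B_n}\!\left(\begin{smallmatrix}t & 0 \\ 0 & t^{-1}\end{smallmatrix}\right)$.

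For $\dot s_n^B = \dot s_n^A \dot s_{n+1}^A \dot s_n^A$, one option is to multiply the three $3\times 3$ blocks directly, checking that the product is the matrix representing $\phi_n^{B_n}\!\left(\begin{smallmatrix}0 & -1 \\ 1 & 0\end{smallmatrix}\right)$; an alternative is to use the $SL_2$ identity $\left(\begin{smallmatrix}0 & -1 \\ 1 & 0\end{smallmatrix}\right) = \left(\begin{smallmatrix}1 & 0 \\ 1 & 1\end{smallmatrix}\right)\!\left(\begin{smallmatrix}1 & -1 \\ 0 & 1\end{smallmatrix}\right)\!\left(\begin{smallmatrix}1 & 0 \\ 1 & 1\end{smallmatrix}\right)$ so that $\dot s_n^B = y_n^B(1)\,x_n^B(-1)\,y_n^B(1)$, reducing to the identities already established (together with their $x$-analogues, which follow by transposing the $y$-computation). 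There is no real obstacle; the only subtle point is the $\sqrt{2}$-scaling in the type B case $i=n$, and the verification is entirely mechanical once the conjugation by $\sqrt{2}$ is bookkept.
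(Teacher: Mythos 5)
Your proposal is correct and matches the paper's (unwritten) argument: the paper simply asserts that the lemma "is verified straightforwardly from the explicit descriptions of the pinnings," and you supply exactly that straightforward entry-by-entry matrix verification, with the only nontrivial check being the $\sqrt{2}$-scaled $3\times 3$ block at $i=n$ in type B, which you handle correctly (the $(n+2,n)$ entry of $y_n^A(a)y_{n+1}^A(b)y_n^A(c)$ is indeed $bc$, giving $m^2$). One small remark: since $\phi_n^{B_n}$ is only specified on torus and unipotent elements, determining the matrix $\dot s_n^B = \phi_n^{B_n}\!\left(\begin{smallmatrix}0 & -1 \\ 1 & 0\end{smallmatrix}\right)$ necessarily requires the $SL_2$ factorization you give, so your two "options" for that step are really the same computation.
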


Note also that in $SL_{2n+1}$, we have
\[\textstyle
y_{n}^A(\frac{m}{\sqrt2})y_{n+1}^A(\sqrt2m) y_{n}^A(\frac{m}{\sqrt2})  =   y_{n+1}^A(\frac{m}{\sqrt2})y_{n}^A(\sqrt2m) y_{n+1}^A(\frac{m}{\sqrt2}) 
\quad\text{and}\quad
\dot s_{n}^A\dot s_{n+1}^A\dot s_{n}^A = \dot s_{n+1}^A\dot s_n^A\dot s_{n+1}^A.
\]

In the lemma above, the notation $y_i^A$ denoted a map into either $GL_{2n}$ or $GL_{2n+1}$, depending on context, and likewise for $\chi_i^A$ and $\dot s_i^A$.  We will continue this abuse of notation, as we trust that this ambiguity will cause no confusion.

\medskip
In what follows, 
we describe in detail the implications of Lemma~\ref{lem:pinning1} for partial flag varieties in Section~\ref{sect:partialflag}, and for Bruhat orders of Weyl groups in Section~\ref{sect:Bruhat}.

\subsection{Partial flag varieties}\label{sect:partialflag}

Let $G$ be as in the Setup~\ref{setup}.
For a subset $J\subseteq I$, let $W_J = \langle s_i : i\in J\rangle$ be the corresponding parabolic subgroup of $W$, and let $P_J$ be the corresponding parabolic subgroup of $G$ containing $B_+$ (so $P_\emptyset = B_+$).
When $I = [n]$, given $J\subseteq [n]$, we often denote $K := [n]\setminus J$.

\medskip
In type A with the standard pinning of $GL_n$, for $k\in [n-1]$ the quotient $GL_n/P_{[n-1]\setminus\{k\}}$ is identified with the Grassmannian $\operatorname{Gr}_{k;n} = \{L \text{ a $k$-dimensional subspace of } \RR^n\}$ by taking the column span of first $k$ columns.
Let $\binom{[n]}{k}$ denote the set of $k$-subsets of $[n]$.
The \emph{Pl\"ucker embedding} $\operatorname{Gr}_{k;n} \hookrightarrow \PP(\bigwedge^k \RR^n) \simeq \PP(\RR^{\binom{[n]}{k}})$ is given by
\[
\operatorname{Gr}_{k;n}\ni L \mapsto (\Delta_S(A))_{S\in\binom{[n]}{k}}
\]
where $A$ is a $n\times k$ matrix $A$ whose column span is $L$, and $\Delta_S(A)$ is the maximal minor of $A$ corresponding to the rows labelled by $S$.
In this case, we call the sequence $(\Delta_S(A))_{S\in\binom{[n]}{k}}$ the \emph{Pl\"ucker coordinates} of $L$. Pl\"ucker coordinates are well defined projective coordinates, that is, they are well-defined up to a global nonzero scalar multiple.

\medskip
We now describe the partial flag varieties of $\operatorname{Sp}_{2n}$ and $\operatorname{SO}_{2n+1}$.
We prepare with the following lemma.
For a subspace $L$ of a finite dimensional vector space $V$ with a fixed nondegenerate symmetric or alternating bilinear form $\mathcal B(\cdot,\cdot)$, denote by $L^\perp := \{v\in V : \mathcal B(v,\ell) = 0 \text{ for all }\ell\in L\}$.
Note that $L$ is isotropic if and only if $L\subseteq L^\perp$, and that $(L^\perp)^\perp = L$.
We say that $L$ is \emph{coisotropic} if $L^\perp \subseteq L$.

\begin{lem}\label{lem:pinning2}
Let $E$ be an anti-diagonal $n\times n$ matrix with alternating $\pm1$, defining a symmetric or alternating bilinear form, depending on $n$.  
For a matrix $M$, let $L_i$ denote the span of its first $i$ columns.
Then, for any matrix $M$ satisfying $M^tEM = E$, we have $L_{n-i} = L_i^\perp$.  Moreover, for any $L\subseteq \RR^n$, the set of Pl\"ucker coordinates of $L$ and that of $L^\perp$ are equal (up to a global nonzero scalar).
\end{lem}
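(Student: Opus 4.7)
For the first claim, a direct computation suffices. Writing $v_1, \dotsc, v_n$ for the columns of $M$, the hypothesis $M^tEM = E$ reads $v_j^t E v_k = E_{jk}$ for all $j,k$. Since $E$ is anti-diagonal with $\pm 1$ entries, $E_{jk} = 0$ unless $j + k = n+1$; hence $v_k$ is orthogonal to each of $v_1, \dotsc, v_i$ if and only if $k \le n - i$. The matrix $M$ is invertible, since $\det(M)^2\det(E) = \det(E) \ne 0$, so $v_1, \dotsc, v_{n-i}$ are linearly independent. Together with $\dim L_i^\perp = n - i$ (by nondegeneracy of the form), this gives $L_{n-i} = L_i^\perp$.

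For the second claim, the plan is to identify $L^\perp$ with the annihilator $\mathrm{Ann}(L) \subseteq V^*$, where $V = \RR^n$. The nondegenerate form $\mathcal B$ defined by $E$ induces an isomorphism $\beta\colon V \to V^*$, $v \mapsto \mathcal B(v, \cdot)$, and unwinding definitions gives $\beta(L^\perp) = \mathrm{Ann}(L)$. The classical complementation formula yields $\pi_S(\mathrm{Ann}(L)) = \mathrm{sgn}(S, [n]\setminus S)\cdot\pi_{[n]\setminus S}(L)$ in dual bases, and since $\beta$ sends $e_j$ to $\epsilon_j\, e_{n+1-j}^*$ with $\epsilon_j = \pm 1$ the nonzero entry of the $j$-th row of $E$, the map $\beta^{-1}$ acts on Pl\"ucker coordinates as the reversal permutation $w_0$ together with signs $\prod_{t\in T}\epsilon_t$. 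Composing these two operations produces an explicit formula
\[
\pi_S(L^\perp) \;=\; c(S)\cdot \pi_{[n]\setminus w_0(S)}(L)
\]
for signs $c(S) \in \{\pm 1\}$ built from $\mathrm{sgn}(S, [n]\setminus S)$ and $\prod_{t\in S}\epsilon_t$.

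The main obstacle is then showing that $c(S)$ is independent of $S$, so that the Pl\"ucker coordinate vectors of $L$ and $L^\perp$ coincide up to a single global scalar under the bijection $S \mapsto [n]\setminus w_0(S)$. I expect this to follow from the alternating $\pm 1$ pattern on the antidiagonal of $E$: swapping an element of $S$ with an adjacent element of $[n]\setminus S$ flips $\mathrm{sgn}(S, [n]\setminus S)$ by $-1$ and simultaneously flips $\prod_{t\in S}\epsilon_t$ by $-1$ (since $\epsilon_{j+1}/\epsilon_j = -1$), so $c(S)$ is invariant. Since $\binom{[n]}{n-k}$ is connected under such adjacent swaps, $c(S)$ is constant. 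A more conceptual alternative avoids the sign bookkeeping entirely: the form $\mathcal B$ together with the top-wedge pairing yields a natural projective isomorphism $\bigwedge^k V \cong \bigwedge^{n-k} V$ sending the line $\bigwedge^k L$ to $\bigwedge^{n-k} L^\perp$, giving the claimed equality up to a global scalar directly.
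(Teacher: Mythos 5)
Your proof is correct and follows essentially the same route as the paper: the first claim by reading off the zero pattern of $E$ plus a dimension count, the second by identifying $L^\perp$ with $\beta^{-1}(\mathrm{Ann}(L))$ and computing the resulting signs explicitly. Your adjacent-swap verification of the invariance of $c(S)$ is in fact slightly tighter than the paper's phrasing, which asserts (without subdivision into adjacent moves) that replacing any $i\in I$ by any $j\notin I$ changes both sign factors or neither. One caveat: the ``more conceptual alternative'' you offer at the end does not actually avoid the sign bookkeeping. Knowing that $\mathcal B$ and the top-wedge pairing give a projective isomorphism $\bigwedge^k V \cong \bigwedge^{n-k}V$ carrying $\bigwedge^k L$ to $\bigwedge^{n-k}L^\perp$ tells you nothing yet about how that isomorphism acts on the standard basis vectors $\be_I$; you must still compute that it sends $\be_I$ to $\pm\,\be_{\overline{[n]\setminus I}}$ and that the sign is independent of $I$. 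The alternating $\pm 1$ pattern on the antidiagonal of $E$ is exactly what makes that sign uniform (the flips in $\mathrm{sgn}(S,[n]\setminus S)$ and in $\prod_{t\in S}\epsilon_t$ cancel), and a non-alternating anti-diagonal $E$ would spoil it. So the sign analysis is not optional; the conceptual framing only reorganizes it.
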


\begin{proof}
To see $L_{n-i} = L_i^\perp$, one notes that the top-left $(n-i)\times i$ submatrix of $E$ is the zero matrix, which implies that $L_{n-i}$ pairs trivially under $E$ with $L_i$.  That $\dim L_{n-i} + \dim L_i = n$ then implies that $L_{n-i} = L_i^\perp$.

For the statement about the Pl\"ucker coordinates, let us first recall some multilinear algebra.
Let $0\to L \to V \to M \to 0$ be a short exact sequence of vector spaces with $\dim V = n$ and $\dim L = k$.  Multiplying via the wedge product on the left defines the map $\bigwedge^k V \to \operatorname{Hom}(\bigwedge^{n-k}V, \bigwedge^n V)$.
The image of $\bigwedge^k L$ under this map, which is a line, is equal to the image of the natural map $\operatorname{Hom}(\bigwedge^{n-k}M, \bigwedge^n V) \to \operatorname{Hom}(\bigwedge^{n-k}V, \bigwedge^n V)$.
Choosing a basis $(\be_1, \dotsc, \be_n)$ of $V$, and the isomorphism $\bigwedge^n V \simeq \RR$ where $\be_1 \wedge \dotsm \wedge \be_n \mapsto 1$, the map $\bigwedge^k V \to \bigwedge^{n-k}V^\vee$ is then given by
\[
\be_I \mapsto \operatorname{sign}(I,[n]\setminus I) \be_{[n]\setminus I}^\vee,
\]
where we denote $\be_I = \be_{i_1} \wedge \dotsm \wedge \be_{i_k}$ for $I = \{i_1 < \dotsb < i_k\}$ (and similarly for the dual vectors $\be_I^\vee$), and $\operatorname{sign}(I,[n]\setminus I)$ is the sign of the permutation $(I, [n]\setminus I)$ of $[n]$ in which both $I$ and $[n]\setminus I$ are ordered in the increasing order.

Now, let $\varphi$ be the isomorphism $V \overset\sim\to V^\vee$ induced by the pairing $E$.
Under this isomorphism, the subspace $M^\vee \subseteq V^\vee \simeq V$ is equal to $L^\perp$.
Hence, the desired result about Pl\"ucker coordinates of $L$ and $L^\perp$ follows from the following claim: denoting by $\overline J := \{n+1-j : j\in J\}$ for $J\subseteq [n]$, one has that the composition $\bigwedge^k V \to \bigwedge^{n-k}V^\vee \to \bigwedge^{n-k}V$ is given by either
\[
\be_I \mapsto \be_{\overline{[n]\setminus I}} \quad\text{for all $I\in \binom{[n]}{k}$\qquad or \qquad} \be_I \mapsto -\be_{\overline{[n]\setminus I}} \quad \text{for all $I\in \binom{[n]}{k}$}.
\]
To show this claim, we first note that an explicit description of $\varphi$ is given by
\[
\varphi(\be_i) = (-1)^{n-i}\be_{n+1-i}^\vee \quad \text{for all $i\in[n]$}.
\]
The composition $\bigwedge^k V \to \bigwedge^{n-k}V^\vee \to \bigwedge^{n-k}V$ is thus given by
\[
\be_I \mapsto \operatorname{sign}(I,[n]\setminus I)  \cdot (-1)^{\#\{j \in [n]\setminus I : n-j\text{ odd}\}} \cdot \be_{\overline{[n]\setminus I}}.
\]
The sign of $\operatorname{sign}(I,[n]\setminus I)  \cdot (-1)^{\#\{j \in [n]\setminus I : n-j\text{ odd}\}}$ is independent of the $k$-subset $I$, since replacing $i\in I$ with $j\in [n]\setminus I$ either changes the signs in both factors or leaves both unchanged.
\end{proof}

For a subset $K  = \{k_1 < \dotsb < k_j\} \subseteq [n]$ and an integer $m$, denote by $m-K$ the set $\{m-k_j < \dotsb < m-k_1\}$ with $0$ omitted if it occurs.
We have the following descriptions for the partial flag varieties of $\operatorname{Sp}_{2n}$ and $\operatorname{SO}_{2n+1}$.
Let $\operatorname{SpFl}_{K;2n}$ and $\operatorname{SOFl}_{K;2n+1}$ (and $\operatorname{SpFl}_{K;2n}^{\Delta>0}$ and $\operatorname{SOFl}_{K;2n+1}^{\Delta>0}$) be as defined in the introduction.
Denote $J = [n]\setminus K$.

\begin{cor}\label{cor:pinning3}
For a subset $K \subseteq [n]$, we have
\begin{align*}
\operatorname{Sp}_{2n}/P_J^C &= \{L_\bullet \in \operatorname{Fl}_{K\cup (2n-K);2n} : L_{i} = L_j^\perp \text{ if $\dim L_i + \dim L_j = 2n$}\} \simeq  \operatorname{SpFl}_{K;2n}, \quad\text{and}\\
\operatorname{SO}_{2n+1}/P_J^B &= \{L_\bullet \in \operatorname{Fl}_{K\cup (2n+1-K);2n+1} :  L_{i} = L_j^\perp  \text{ if $\dim L_i + \dim L_j = 2n+1$}\} \simeq  \operatorname{SOFl}_{K;2n+1}.
\end{align*}
Moreover, under the last isomorphisms, we have $(\operatorname{Sp}_{2n}/P_J^C) \cap \operatorname{Fl}_{K\cup (2n-K);2n}^{\Delta >0} \simeq \operatorname{SpFl}_{K;2n}^{\Delta>0}$ and $(\operatorname{SO}_{2n+1}/P_J^B) \cap \operatorname{Fl}_{K\cup (2n+1-K);2n+1}^{\Delta >0} \simeq \operatorname{SOFl}_{K;2n+1}^{\Delta>0}$, and similarly with $\Delta \geq 0$ in place of $\Delta >0$.
\end{cor}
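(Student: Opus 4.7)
The plan is to establish the corollary by (i) identifying $\operatorname{Sp}_{2n}/P_J^C$ (resp.\ $\operatorname{SO}_{2n+1}/P_J^B$) as the orbit of a standard isotropic flag, (ii) seeing this orbit sitting inside the ambient full flag variety via perps, and (iii) passing this identification through to Pl\"ucker positive parts via \Cref{lem:pinning2}. I will write the argument in type C; type B is identical, reading $2n+1$ for $2n$ and using the second half of \Cref{lem:pinning1}.

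Let $E_i := \langle \be_1, \dots, \be_i \rangle \subseteq \RR^{2n}$. First, I observe that for $i \in K \subseteq [n]$, the subspace $E_i$ is isotropic with respect to $\omega$, because the top-left $n \times n$ block of the Gram matrix $E$ vanishes. Next, I use \Cref{lem:pinning1}: the simple reflection $s_i^C$ acts on the standard basis as $(i,i{+}1)(2n{-}i,2n{-}i{+}1)$ for $i \in [n-1]$ and as $(n,n{+}1)$ for $i = n$; hence the root subgroups indexed by $J = [n] \setminus K$ stabilize $E_k$ precisely when $k \in K \cup (2n-K)$. Combined with the fact that $B_+^C$ stabilizes every $E_k$, this shows that $P_J^C$ equals the $\operatorname{Sp}_{2n}$-stabilizer of the partial flag $(E_k)_{k \in K \cup (2n-K)}$. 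Witt's extension theorem (or, equivalently, the well-known transitivity of classical groups on their flag varieties) then gives a bijection
\[
\operatorname{Sp}_{2n}/P_J^C \;\overset{\sim}{\longrightarrow}\; \{L_\bullet \in \operatorname{Fl}_{K \cup (2n-K); 2n} : L_i = L_j^\perp \text{ whenever } \dim L_i + \dim L_j = 2n\},
\]
where the perp condition is automatic on the image because for any $g \in \operatorname{Sp}_{2n}$, \Cref{lem:pinning2} applied to $g$ itself gives $g E_{2n-i} = (gE_i)^\perp$. Forgetting the subspaces of dimension in $2n - K$ yields an isomorphism to $\operatorname{SpFl}_{K;2n}$, with inverse given by extending an isotropic flag $(L_i)_{i \in K}$ to $(L_i)_{i \in K \cup (2n-K)}$ via $L_{2n-i} := L_i^\perp$; this extension is a genuine flag because isotropy of $L_i$ means $L_i \subseteq L_i^\perp = L_{2n-i}$.

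For the Pl\"ucker statement, I note that the Pl\"ucker coordinates of the extended flag are the concatenation of the Pl\"ucker coordinates of the $L_i$ for $i \in K$ and of the $L_i^\perp$ for $i \in K$. By the Pl\"ucker-coordinate assertion of \Cref{lem:pinning2}, the coordinates of $L_i^\perp$ are obtained from those of $L_i$ by a fixed reindexing and multiplication by one global nonzero scalar (the same sign on every coordinate). Since Pl\"ucker positivity (resp.\ nonnegativity) is defined up to a global positive rescaling of each factor, this scalar — whatever its sign — can be absorbed, so the extended flag lies in $\operatorname{Fl}_{K \cup (2n-K); 2n}^{\Delta > 0}$ (resp.\ ${\Delta \geq 0}$) if and only if $(L_i)_{i \in K}$ lies in $\operatorname{SpFl}_{K;2n}^{\Delta > 0}$ (resp.\ ${\Delta \geq 0}$).

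The routine parts are the stabilizer computation and the transitivity. The one place where care is genuinely required is the final compatibility with positivity: one must check that the ``global nonzero scalar'' in \Cref{lem:pinning2} is truly global (independent of the Pl\"ucker index) so that an overall sign flip suffices to make all coordinates of $L_i^\perp$ positive when those of $L_i$ are; this is precisely what the sign computation at the end of the proof of \Cref{lem:pinning2} delivers, and the whole argument hinges on it. For type B, the only substantive change is that $\psi(n)$ has three entries and $\phi_n^{B_n}$ involves the $\sqrt{2}$ factors of \S\ref{sssec:Bpinning}, but neither affects the stabilizer computation nor the perp-on-Pl\"ucker-coordinates argument, so the same chain of identifications goes through verbatim.
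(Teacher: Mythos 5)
Your proof is correct and follows essentially the same route as the paper's. Both arguments reduce to three ingredients: (i) Lemma~\ref{lem:pinning1} to identify $P_J^C$ (resp.\ $P_J^B$) with $\operatorname{Sp}_{2n}\cap P^A_{[2n-1]\setminus(K\cup(2n-K))}$ (equivalently, the stabilizer of the standard flag $(E_k)_{k\in K\cup(2n-K)}$); (ii) Lemma~\ref{lem:pinning2} to get both the perp-compatibility of the image and the statement about Pl\"ucker coordinates of $L$ and $L^\perp$ agreeing up to a global sign; and (iii) transitivity of the group on isotropic flags of a fixed type to show the orbit fills out the right-hand side. The only stylistic difference is in step (iii): you invoke Witt's extension theorem outright, whereas the paper uses Witt only to reduce to $K=[n]$ and then produces the required element of $\operatorname{Sp}_{2n}$ explicitly by choosing a flag-compatible basis together with its dual basis. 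The paper's more hands-on construction sidesteps having to note that in type B one can replace an orthogonal matrix by $\pm$ itself to land in $\operatorname{SO}_{2n+1}$, but this is a very minor point and your version is fine. Your emphasis on the global-scalar claim in Lemma~\ref{lem:pinning2} as the crux of the Pl\"ucker compatibility is well placed — that is exactly what the sign computation at the end of that lemma's proof is for.
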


\begin{proof}
It follows from Lemma~\ref{lem:pinning1} that
\[
P_J^C = \operatorname{Sp}_{2n} \cap P_{[2n-1]\setminus (K \cup (2n-K))}^A \quad\text{and}\quad P_J^B = \operatorname{SO}_{2n+1} \cap P_{[2n]\setminus (K \cup (2n+1-K))}^A.
\] 
In particular, the left-hand-sides of the equations in the corollary are subsets of the right-hand-sides by Lemma~\ref{lem:pinning2}.
For the other inclusion, let us prove the type C case; the type B case is similar.  Given such an $L_\bullet$ in the right-hand-side $\operatorname{Fl}_{K\cup (2n-K);2n}$, we need to construct $A\in \operatorname{Sp}_{2n}$ such that $L_\bullet = L_\bullet(A)$. Since every isotropic subspace is contained in an isotropic space of dimension $n$ (by Witt's theorem), and since subspaces of isotropic subspaces are isotropic, it suffices to do the case when $K = [n]$.
Pick a basis $(v_1, \dotsc, v_n, v_{n+1}, \dotsc, v_{2n})$ of $\RR^{2n}$ such that $v_i \in L_i\setminus L_{i-1}$ for all $i = 1, \dotsc, n$ (where $L_0 := 0$).
Let $(v_1^*, \dotsc, v_{2n}^*)$ be the dual basis with respect to the nondegenerate alternating form $\langle \cdot, \cdot \rangle$ given by $E$, where $\langle v_i, v_i^* \rangle = -\langle v_i^*, v_i\rangle = 1$.
Then, because $v_{n+1-j}^* \in L_{n-j}^\perp \setminus L^\perp_{n-(j-1)}$ for all $j = 1, \dotsc, n$, and since $L_1 \subsetneq \dotsb \subsetneq L_n = L_n^\perp \subsetneq L_{n-1}^\perp \subsetneq \cdots \subsetneq L_1^\perp$ is a complete flag, we find that $(v_1, \dotsc, v_n, v_n^*, \dotsc, v_1^*)$ is a basis of $\RR^{2n}$.
The matrix $A$ consisting of this basis as its columns is the desired $A\in \operatorname{Sp}_{2n}$.
Lastly, the statements about the Pl\"ucker positive or nonnegative parts follow from Lemma~\ref{lem:pinning2}
\end{proof}

\subsection{Bruhat orders}\label{sect:Bruhat}

Let $W$ be a Weyl group with simple reflections $\{s_i: i\in I\}$.
We use a bold letter for an expression (i.e.\ a word) in the simple reflections, whose un-bolded letter denotes the element in $W$ obtained by multiplying the simple reflections in that expression. We will say an expression $\bw=s_{i_1}\cdots s_{i_l}$ for $w\in W$ has length $\ell(\bw)=l$. The expression is \emph{reduced} if it is an expression for $w$ of minimal length. We write $\ell(w)$ for the length of a reduced expression for $w$.
Denote by $w_0\in W$ the element of longest length.

For a reduced expression $\bv=s_{i_1}\cdots s_{i_p}$ of an element $v\in W$, a \emph{subexpression} $\bu$ of $\bv$ is a choice of either $1$ or $s_{i_j}$ for each $j\in[p]$. We will record this by writing $\bu$ as a string whose $j^{\textnormal{th}}$ entry is either $1$ or $s_{i_j}$.
We may interpret the subexpression $\bu$ as an expression for some $u\in W$ by ignoring the $1$s.
The \emph{Bruhat order} $<$ on $W$ is defined by $u<v$ if there is a subexpression for $u$ in some, equivalently any, expression for $v$.  
For a general background on Weyl groups and Bruhat orders, we refer to \cite{BjornerBrenti} or \cite{Humphreys}.

\medskip
The Weyl groups of $\operatorname{Sp}_{2n}$ and $\operatorname{SO}_{2n+1}$ are isomorphic and known as \emph{signed permutation groups}.
We refer the reader to \cite[Chapter 8.1]{BjornerBrenti} for relevant background on signed permutation groups, and record the key facts that we need here. 
In terms of the embeddings 
$\operatorname{Sp}_{2n}\hookrightarrow GL_{2n}$ and $\operatorname{SO}_{2n+1}\hookrightarrow GL_{2n+1}$,
Lemma~\ref{lem:pinning1} yields the following two realizations of the signed permutation group as a subgroup of a type A Weyl group.

\begin{rem}\label{rem:weyl}
For $\operatorname{Sp}_{2n}$, relabel $[2n]$ as $[n,\overline n] :=\{1, 2, \dotsc, n, \overline n, \dotsc, \overline 2, \overline 1\}$, where we view the ``bar'' as an involution $\overline{\overline{i}}=i$.  Then, Lemma~\ref{lem:pinning1} identifies $s_i^C$ with $(i \ i+1)(\overline i \ \overline{i+1})$ for $i  \in [n-1]$, and $s_n^C$ with $(n \ \overline n)$.
In particular, we may identify $W^C$ as the subgroup of $\mathfrak S_{2n}$ consisting of permutations $\sigma$ of $[n,\overline n]$ such that $\sigma(\overline{i}) = \overline{\sigma(i)}$ for all $i \in [n]$.
Similarly for $\operatorname{SO}_{2n+1}$, relabel $[2n+1]$ as $[n,0,\overline n] :=\{1,2, \dotsc, n, 0, \overline n, \dotsc, \overline 2, \overline 1\}$.  Then, we have the same description for the $s_i^B$, and $W^B$ is the subgroup of $\mathfrak S_{2n+1}$ consisting of permutations $\sigma$ of $[n,0,\overline n]$ such that $\sigma(0) = 0$ and $\sigma(\overline i) = \overline{\sigma(i)}$ for all $i\in [n]$.
\end{rem}

Using the descriptions of $W^C$ and $W^B$ in the remark, 
we now record explicit descriptions for lengths of elements in terms of \emph{inversions}. 

\begin{defn}
Let us linearly order $1<2<\cdots<n<0<\overline{n}<\cdots<\overline{2}<\overline{1}$.
For $w\in W^A\simeq \mathfrak{S}_n$, a pair $(i,j)\in [n]\times [n]$ is an \emph{inversion} of $w$ if $i<j$ and $w(j)<w(i)$ . 
 For $w\in W^C\subset \mathfrak{S}_{2n}$ (resp.\ $W^B\subset \mathfrak{S}_{2n+1}$), a pair $(i,j) \in [n]\times [n,\overline n]$ is an \emph{inversion} if $i<j$ and $w(j)<w(i)$, or equivalently, $w(\overline{i})<w(\overline{j})$. 
\end{defn}

Definitions of inversions in other Weyl groups can be found in \cite{BjornerBrenti}.

\begin{prop}
Let $W=W^A, W^B,$ or $W^C$.  For $w\in W$, its length $\ell(w)$ is the number of inversions of $w$.
\end{prop}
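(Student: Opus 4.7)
The proposition is the classical length-inversion identity for Coxeter groups of types $A$, $B$, and $C$. My plan is to biject the inversions of $w$ with the set $\Phi^+ \cap w^{-1}(\Phi^-)$ of positive roots sent to negative roots by $w$, and then invoke the general identity $\ell(w) = |\Phi^+ \cap w^{-1}(\Phi^-)|$ valid for any Weyl group.

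For $W^A \simeq \mathfrak S_n$, the positive roots are $\be_i - \be_j$ with $1 \leq i < j \leq n$, and $w(\be_i - \be_j) = \be_{w(i)} - \be_{w(j)}$ is negative precisely when $w(j) < w(i)$, which matches the definition of inversion directly. For $W^C$ (resp.\ $W^B$), the positive roots are $\be_i \pm \be_j$ for $1 \leq i < j \leq n$ together with $2\be_i$ (resp.\ $\be_i$) for $i \in [n]$; using the signed permutation realization of Remark~\ref{rem:weyl}, I would match these positive roots with pairs in $[n] \times [n, \overline n]$ via $\be_i - \be_j \leftrightarrow (i, j)$, $\be_i + \be_j \leftrightarrow (i, \overline j)$, and $2\be_i$ (or $\be_i$) $\leftrightarrow (i, \overline i)$. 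One then verifies, using that $w$ acts on $\be_k$ by $\be_k \mapsto \pm \be_{|w(k)|}$ (with sign determined by whether $w(k)$ is barred), that a given positive root is sent to a negative root by $w$ exactly when the corresponding pair is an inversion of $w$ in the sense defined.

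An alternative approach, bypassing root systems entirely, is induction on $\ell(w)$: the identity has zero length and zero inversions, and if $\ell(w) > 0$ one picks a right descent $s_i$ with $\ell(ws_i) = \ell(w) - 1$ and verifies directly that $\operatorname{inv}(ws_i) = \operatorname{inv}(w) - 1$. For $i < n$, right multiplication by $s_i^B$ or $s_i^C$ swaps the window entries at positions $i, i+1$ (and at $\overline{i+1}, \overline i$), and a routine case analysis shows exactly one inversion pair toggles. For $i = n$ the analysis is more delicate, since $s_n^C$ swaps positions $n$ and $\overline n$ only, while $s_n^B$ corresponds to $\dot s_n^A \dot s_{n+1}^A \dot s_n^A$ in $GL_{2n+1}$ by Lemma~\ref{lem:pinning1} and affects three central positions simultaneously. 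The main obstacle in this inductive route is precisely the bookkeeping for $s_n$: one must enumerate all candidate inversion pairs involving the central indices and verify that their combined change is exactly $\pm 1$. The root-theoretic approach sidesteps this difficulty entirely by reducing the problem to the Coxeter-theoretic identity.
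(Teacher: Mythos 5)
The paper gives no proof of this proposition: it is stated as classical and the text immediately before it refers the reader to Bj\"orner--Brenti for ``definitions of inversions in other Weyl groups.'' Your root-theoretic plan---bijecting $\Phi^+ \cap w^{-1}(\Phi^-)$ with inversions and invoking $\ell(w) = |\Phi^+ \cap w^{-1}(\Phi^-)|$---is the standard argument for this fact and is the natural way to fill in the citation. Your alternate inductive route is also sound, and you correctly flag the $s_n$ case as the place where the bookkeeping becomes delicate; the root-theoretic argument does avoid that.

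One point you should address explicitly, since it affects whether your bijection actually hits the inversion set the paper defines. Your correspondence sends $\be_i - \be_j \leftrightarrow (i,j)$ and $\be_i + \be_j \leftrightarrow (i,\overline j)$ only for $i < j \leq n$, plus $2\be_i$ (resp.\ $\be_i$) $\leftrightarrow (i,\overline i)$, so its image is $\{(i,j) : 1\le i<j\le n\}\cup\{(i,\overline j) : 1\le i\le j\le n\}$, which has $n^2$ elements. But the paper's definition, read literally, allows any pair $(i,j)\in [n]\times[n,\overline n]$ with $i<j$, which also includes $(i,\overline k)$ with $k<i$. Since $w(\overline k)<w(i)$ is equivalent to $w(\overline i)<w(k)$, such a pair records an inversion exactly when $(k,\overline i)$ does, so the root $\be_k+\be_i$ would be counted twice. (Concretely, for $n=2$ and $w=s_1s_2s_1$, which has length $3$, one finds four inversions under the literal reading: $(1,2)$, $(1,\overline 2)$, $(1,\overline 1)$, and $(2,\overline 1)$, the last two being duplicates of the root $\be_1+\be_2$.) So either you should note that you are implicitly restricting to pairs with $j\le \overline i$---which is what makes your bijection, and the count, come out correctly---or explicitly observe the redundancy and that each $\be_i+\be_j$ corresponds to a two-element equivalence class of pairs. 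Without one of these remarks, the bijection you describe is between positive roots and a proper subset of the paper's candidate pairs, and the final equality is not established as stated.
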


We use inversions to record some useful facts about reduced expressions and subexpressions, and how $\psi$ acts on them.
We work with two pairs $(\iota,\psi)$ here, namely, $\iota:Sp_{2n}\hookrightarrow GL_{2n}$ and $\iota:SO_{2n+1}\hookrightarrow GL_{2n+1}$, with the corresponding maps $\psi$ defined below Setup~\ref{setup}. We will slightly abuse notation by referring to both pairs by the same symbols, trusting that it is clear from context which we mean.
For the rest of this section, $\Phi$ stands for $C$ or $B$ whenever it appears.
For an expression $\bv = s_{i_1}^\Phi \dotsm s_{i_\ell}^\Phi$ in $W^\Phi$, let us write $\psi(\bv)$ for the expression $\mathbf s_{\psi(i_1, \dotsc, i_\ell)}$ in $W^A$.

\begin{rem}
For the proof of Theorem~\ref{thm:main}, the only property we will need is the following: there exists a reduced word $\mathbf{w_0}$ for $w_0^\Phi \in W^\Phi$ such that $\psi(\mathbf{w_0})$ is a reduced word for $w_0^A \in W^A$. Corollary~\ref{cor:preserveslongestword} here states that for \emph{any} reduced word $\mathbf{w_0}$ for $w_0^\Phi$, the expression $\psi(\mathbf{w_0})$ is a reduced word for $w_0^A$.
The reader seeking a minimal path to Theorem~\ref{thm:main} may skip the rest of this section by verifying this property directly for a choice of reduced word for $w_0^\Phi$.

\end{rem}
 
\begin{prop}\label{prop:psipreservesreduced}
    If $\bv$ is a reduced expression for some $v\in W^\Phi$, then $\psi(\bv)$ is a reduced expression as well.
\end{prop}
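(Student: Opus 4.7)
The plan is to induct on $|\mathbf{v}|$. Writing $\mathbf{v} = \mathbf{v}' \cdot s_i^\Phi$ for a shorter reduced expression $\mathbf{v}'$, the induction hypothesis yields that $\psi(\mathbf{v}')$ is a reduced word for $\iota(v')$ in $W^A$. Since $\psi(\mathbf{v}) = \psi(\mathbf{v}') \cdot \psi(s_i^\Phi)$ is automatically an expression for $\iota(v)$, showing it is reduced reduces to the length identity
\[
\ell^A(\iota(v)) - \ell^A(\iota(v')) = |\psi(i)|.
\]
I would verify this by case analysis on $(\Phi, i)$, using the signed-permutation description of $W^\Phi \subset \mathfrak{S}_N$ from Remark~\ref{rem:weyl} and the symmetry $\iota(v')(\bar k) = \overline{\iota(v')(k)}$.

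When $i < n$ (in either type), the word $\psi(s_i^\Phi)$ is a product of two simple reflections of $W^A$ acting on disjoint positions; a direct check shows that their two descent conditions $\iota(v')(i) < \iota(v')(i+1)$ and $\iota(v')(N-i) < \iota(v')(N-i+1)$ (for $N = 2n$ or $2n+1$) are both equivalent to $v'(i) < v'(i+1)$, which is the descent condition for $s_i^\Phi$ in $W^\Phi$ and holds by reducedness of $\mathbf{v}$. This gives the required length increment of $|\psi(i)| = 2$. For $i = n$ in type C, $\psi(s_n^C) = s_n^A$ is a single simple reflection, and its descent condition $\iota(v')(n) < \iota(v')(n+1) = \overline{v'(n)}$ is equivalent to $v'(n)$ being unbarred, matching the descent condition for $s_n^C$.

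The delicate case is $i = n$ in type B, where $\psi(s_n^B) = s_n^A s_{n+1}^A s_n^A$ equals the transposition $(n, n+2) \in \mathfrak{S}_{2n+1}$ and the type A length must jump by $3$ rather than $1$. Here $v'(0) = 0$ forces $\iota(v')(n+1) = n+1$, while the symmetry forces $\iota(v')(n+2) = 2n+2 - \iota(v')(n)$. Setting $a := \iota(v')(n)$, the three values at positions $n, n+1, n+2$ read $(a, n+1, 2n+2-a)$, which are strictly increasing iff $a \leq n$, i.e., iff $v'(n)$ is unbarred. Either a direct inversion count or the standard formula for length change under a transposition $(i,j)$ (in terms of indices $m \in (i,j)$ whose value lies strictly between $w(i)$ and $w(j)$) then yields a length increment of exactly $+3$ in this case, matching the descent condition for $s_n^B$. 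This is the main obstacle: right-multiplication by the non-simple transposition $(n, n+2)$ can in general alter length by $\pm 1$ or $\pm 3$, and it is only the pinning $\iota(v')(n+1) = n+1$ coming from $v'(0) = 0$ that forces the $\pm 3$ alternative whenever $\iota(v')(n) \neq n+1$ (which is automatic since $v'$ is a bijection with $v'(0)=0$).
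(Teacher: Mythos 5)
Your proof is correct and follows essentially the same inductive, inversion-counting strategy as the paper, with the delicate type B, $i=n$ case (where $\psi(s_n^B)=(n\ n{+}2)$ is a non-simple transposition) made more explicit via the transposition-length formula together with the pinned value $\iota(v')(n+1)=n+1$. One small terminological slip: the conditions $\iota(v')(i)<\iota(v')(i+1)$ that you call ``descent conditions'' are in fact ascent (non-descent) conditions, which is indeed what reducedness of $\mathbf v = \mathbf v' s_i^\Phi$ supplies, so the mathematics is unaffected.
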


\begin{proof}
Let $\mathfrak{S}^\pm_n$ be the signed symmetric group.
Recall that $s_i^C$ generates a subgroup of $\mathfrak{S}_{2n}$ isomorphic to $\mathfrak{S}^\pm_n$, and $s_i^B$ generates a subgroup of $\mathfrak{S}_{2n+1}$ isomorphic to $\mathfrak{S}^\pm_n$.
We proceed by induction on length in $\mathfrak{S}^\pm_n$. When $\ell(\bv)=0$, there is nothing to check.
    
    We first consider reduced expressions $\bv=\bs_\bi^C$ for elements $v$ of $W^C\cong \mathfrak{S}^\pm_n \subset \mathfrak S_{2n}$. If $\bv$, $\psi(\bv)$, and $\bv s_i^{C}$ are all reduced expressions, we want to show that $\psi(\bv s_i^{C})$ is reduced as well. By reducedness, $s_i^{C}$ multiplied to the right of $\bv$ introduces an inversion to $v$. By definition of an inversion in $\mathfrak{S}^\pm_n$, and from the explicit description of $\mathfrak{S}^\pm_n$ and $\mathfrak{S}_{2n}$ given in \Cref{rem:weyl}, multiplication by $\psi(s_i^{C})$ adds either one or two inversions to $\psi(v)$ depending on whether $i=n$ or $i\neq n$. Since $\psi(s_i^{C})$ is a product of one or two simple transpositions, respectively, $\psi(\bv s_i^{C})$ is reduced. 

    We next consider reduced expressions $\bv=\bs_\bi^B$ for elements $v$ of $W^B\cong \mathfrak{S}^\pm_n \subset \mathfrak S_{2n+1}$. If $\bv$, $\psi(\bv)$, and $\bv s_i^{B}$ are all reduced expressions, we want to show that $\psi(\bv s_i^{B})$ is reduced as well. By reducedness, $s_i^{B}$ multiplied to the right of $\bv$ introduces an inversion to $v$. By definition of an inversion in $\mathfrak{S}^\pm_n$, and from the explicit description of $\mathfrak{S}^\pm_n$ and $\mathfrak{S}_{2n+1}$ given in \Cref{rem:weyl}, multiplication by $\psi(s_i^{B})$ adds either three or two inversions to $\psi(v)$ depending on whether $i=n$ or $i\neq n$. Since $\psi(s_i^{B})$ is a product of three or two simple transpositions, respectively, $\psi(\bv s_i^{B})$ is reduced. 
\end{proof}

\begin{prop}\label{prop:reduced}
    Let $\bv$ be a reduced expression for some $v\in W^\Phi$. Then, $\bv s_i^{\Phi}$ is reduced if and only if $\psi(\bv)\psi(s_i^\Phi)$ is reduced. 
\end{prop}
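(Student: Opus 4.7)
The forward direction is immediate from Proposition~\ref{prop:psipreservesreduced}: if $\bv s_i^\Phi$ is reduced, then applying that proposition to this reduced expression yields that $\psi(\bv s_i^\Phi) = \psi(\bv)\psi(s_i^\Phi)$ is a reduced expression in $W^A$.

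For the reverse direction, my plan is to prove the contrapositive by a length count. Assume $\bv s_i^\Phi$ is not reduced. Since $\bv$ is reduced, the standard exchange/length characterization forces $\ell(v s_i^\Phi) = \ell(v) - 1$. I then pick any reduced expression $\mathbf u$ for $v s_i^\Phi$; then $\mathbf u s_i^\Phi$ is an expression for $v$ in $\ell(v)$ letters and is therefore reduced as well. Applying Proposition~\ref{prop:psipreservesreduced} to both $\mathbf u$ and $\mathbf u s_i^\Phi$, I obtain reduced $W^A$-expressions $\psi(\mathbf u)$ for $\iota(v s_i^\Phi)$ and $\psi(\mathbf u)\psi(s_i^\Phi)$ for $\iota(v)$, of lengths $|\psi(\mathbf u)|$ and $|\psi(\mathbf u)| + |\psi(s_i^\Phi)|$ respectively. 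Hence the $W^A$-length of $\iota(v s_i^\Phi)$ is exactly $|\psi(s_i^\Phi)|$ less than that of $\iota(v)$.

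On the other hand, $\psi(\bv)\psi(s_i^\Phi)$ is an expression for $\iota(v s_i^\Phi)$, and Proposition~\ref{prop:psipreservesreduced} applied to $\bv$ gives $|\psi(\bv)| = \ell(\iota(v))$. So the length of $\psi(\bv)\psi(s_i^\Phi)$ is $\ell(\iota(v)) + |\psi(s_i^\Phi)|$, which strictly exceeds $\ell(\iota(v s_i^\Phi))$ since $|\psi(s_i^\Phi)| \geq 1$ in both types B and C. Therefore $\psi(\bv)\psi(s_i^\Phi)$ is not reduced, completing the contrapositive. The entire argument is elementary length bookkeeping once Proposition~\ref{prop:psipreservesreduced} is available, so I do not foresee a substantive obstacle; the only subtlety is ensuring that the auxiliary expression $\mathbf u s_i^\Phi$ is genuinely a reduced word for $v$ so that Proposition~\ref{prop:psipreservesreduced} can be applied to it.
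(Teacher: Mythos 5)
Your proof is correct and takes essentially the same approach as the paper: both directions reduce to Proposition~\ref{prop:psipreservesreduced}, with the reverse direction argued by contrapositive. The paper phrases the contrapositive by extracting a reduced subexpression $\bu$ of $\bv s_i^\Phi$ for $v s_i^\Phi$ and observing $\psi(\bu)$ is a strict reduced subexpression of $\psi(\bv)\psi(s_i^\Phi)$ for the same element, whereas you do an equivalent length count using an auxiliary reduced expression $\mathbf u$ for $v s_i^\Phi$; these are minor variations of the same argument.
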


\begin{proof}
    Suppose $\bv s_i^\Phi$ is reduced. Then, since $\psi(\bv s_i^\Phi)=\psi(\bv)\psi(s_i^\Phi)$, the latter is reduced by \Cref{prop:psipreservesreduced}. If $\bv s_i^\Phi$ is not reduced, then there is a reduced subexpression $\bu$ contained in it such that $vs_i^\Phi=u$. By \Cref{prop:psipreservesreduced}, $\psi(\bu)$ is reduced. Both $\psi(\bu)$ and $\psi(\bv)\psi(s_i^\Phi)$ are expressions for the same Weyl group element. Also, $\psi(\bu)$ is a strict subexpression of $\psi(\bv)\psi(s_i^\Phi)$. Thus, $\psi(\bv)\psi(s_i^\Phi)$ is not reduced.
\end{proof}

\begin{cor}\label{cor:reducedinAfromC}
    For $i<n$ and $\bv=\bs_\bi^C$ an expression for some $v\in W^C$, either both or neither of $\psi(\bv)s_i^{A}$ and $\psi(\bv)s_i^{A}s_{2n-i}^A$ are reduced.
\end{cor}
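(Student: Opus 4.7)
The plan is to reduce the statement to a symmetry of right ascents for $v$ viewed as an element of $\mathfrak{S}_{2n}$ via the embedding $W^C \hookrightarrow W^A$. First I would dispose of the case that $\bv$ is not reduced: by \Cref{prop:psipreservesreduced}, a redundancy in $\bv$ propagates to one in $\psi(\bv)$, and a non-reduced word stays non-reduced upon extension, so both $\psi(\bv)s_i^A$ and $\psi(\bv)s_i^A s_{2n-i}^A$ are non-reduced. I may therefore assume $\bv$ is reduced, in which case $\psi(\bv)$ is a reduced expression for $v$ of length $\ell_A(v)$ in $\mathfrak{S}_{2n}$.

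The main observation I would need is a ``paired ascent'' property coming from the signed-permutation structure: as an element of $W^C$, $v$ satisfies (cf.\ \Cref{rem:weyl}) the identity $v(2n+1-k) = 2n+1-v(k)$ for all $k$. Substituting $k = i$ and $k = i+1$ gives $v(i) < v(i+1) \iff v(2n-i) < v(2n-i+1)$, i.e.\ $s_i^A$ is a right ascent of $v$ if and only if $s_{2n-i}^A$ is. Moreover, since $i < n$ the position-pairs $\{i,i+1\}$ and $\{2n-i, 2n-i+1\}$ are disjoint, so $s_i^A$ and $s_{2n-i}^A$ commute and multiplying $v$ by $s_i^A$ does not disturb the entries at positions $2n-i, 2n-i+1$; consequently $s_{2n-i}^A$ is a right ascent of $v s_i^A$ iff it is a right ascent of $v$.

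From these observations I would finish as follows. Since $\psi(\bv)$ is a reduced word for $v$ of length $\ell_A(v)$, the word $\psi(\bv)s_i^A$ is reduced iff $s_i^A$ is a right ascent of $v$, while $\psi(\bv)s_i^A s_{2n-i}^A$ is reduced iff both $s_i^A$ is a right ascent of $v$ and $s_{2n-i}^A$ is a right ascent of $v s_i^A$. By the two bullet points above, both conditions are equivalent to the single statement that $s_i^A$ is a right ascent of $v$, so either both words are reduced or neither is. The only real content is the signed-permutation symmetry; the remainder is a routine length count in $\mathfrak{S}_{2n}$, so I do not expect any genuine obstacle.
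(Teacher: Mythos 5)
Your proof is correct and takes essentially the same approach as the paper: both rest on the signed-permutation identity $v(2n+1-k)=2n+1-v(k)$ from \Cref{rem:weyl}, which forces $s_i^{A}$ and $s_{2n-i}^{A}$ to be right ascents of $v$ in $\mathfrak{S}_{2n}$ simultaneously (and independently, since the affected positions are disjoint when $i<n$). The paper compresses this into a one-line citation of \Cref{prop:reduced} together with \Cref{rem:weyl}, whereas you unpack the ascent bookkeeping directly; there is no substantive difference.
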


\begin{proof}
    This follows directly from \Cref{prop:reduced}, using our explicit description of $s_i^C$ in \Cref{rem:weyl}.
\end{proof}

\begin{cor}\label{cor:reducedinAfromB}
    For $i<n$ and $\bv=\bs_\bi^B$ an expression for some $v\in W^B$, either both or neither of $\psi(\bv)s_i^{A}$ and $\psi(\bv)s_i^{A}s_{2n+1-i}^A$ are reduced. Moreover, either all or none of $\psi(\bv)s_{n+1}^A$, $\psi(\bv)s_{n+1}^As_n^A$, $\psi(\bv)s_{n+1}^As_n^As_{n+1}$, $\psi(\bv)s_{n}^A$, and $\psi(\bv)s_{n}^As_{n+1}^A$ are reduced.
\end{cor}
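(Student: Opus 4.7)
The plan is to deduce both statements from a symmetry of the right-ascent/descent set of $\psi(v)$, combined with standard parabolic Coxeter theory. As in \Cref{rem:weyl}, we identify $v\in W^B$ with $\psi(v)\in W^A$ viewed as a permutation of $[2n+1]$; the signed-permutation condition $v(\overline i)=\overline{v(i)}$ together with $v(0)=0$ translates to $\psi(v)(n+1)=n+1$ and $\psi(v)(2n+2-j)=2n+2-\psi(v)(j)$ for $j\neq n+1$. The main combinatorial input for both parts is this signed-permutation symmetry; the remainder is routine Coxeter theory.

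For the first assertion, $s_i^A$ and $s_{2n+1-i}^A$ have disjoint supports $\{i,i+1\}$ and $\{2n+1-i,2n+2-i\}$ for $i<n$, and therefore commute in $\mathfrak S_{2n+1}$. Applying the signed-permutation identity at $j=i$ and $j=i+1$ gives the equivalence $\psi(v)(i)<\psi(v)(i+1)\iff\psi(v)(2n+1-i)<\psi(v)(2n+2-i)$, so $s_i^A$ and $s_{2n+1-i}^A$ are simultaneously right ascents or simultaneously right descents of $\psi(v)$. In the ``both ascents'' case, $\psi(v)$ is the minimum-length coset representative in $\psi(v)W_{\{s_i^A,s_{2n+1-i}^A\}}$, so both $\psi(\bv)s_i^A$ and $\psi(\bv)s_i^As_{2n+1-i}^A$ are reduced. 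In the ``both descents'' case, $\psi(\bv)s_i^A$ is not reduced, and since two commuting descents lower the length by two, the word $\psi(\bv)s_i^As_{2n+1-i}^A$ of combinatorial length $\ell(\psi(v))+2$ represents an element of length $\ell(\psi(v))-2$, so it is not reduced either.

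For the second assertion, the five listed expressions are $\psi(\bv)$ followed by a reduced word for each of the five non-identity elements of the parabolic $W_J:=\langle s_n^A,s_{n+1}^A\rangle\cong \mathfrak S_3$, where the braid relation $s_n^As_{n+1}^As_n^A=s_{n+1}^As_n^As_{n+1}^A$ identifies the two reduced words for the longest element. Since $\psi(v)(n+1)=n+1$ and $\psi(v)(n+2)=2n+2-\psi(v)(n)$, and since $\psi(v)(n)=n+1$ is precluded by injectivity, we obtain $\psi(v)(n)<n+1\iff \psi(v)(n+1)<\psi(v)(n+2)$, so $s_n^A$ and $s_{n+1}^A$ are simultaneously right ascents or simultaneously right descents of $\psi(v)$. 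In the ``both ascents'' case, $\psi(v)$ is the minimum-length representative of $\psi(v)W_J$, hence $\ell(\psi(v)u)=\ell(\psi(v))+\ell(u)$ for every $u\in W_J$, and all five expressions are reduced. In the ``both descents'' case, every non-identity $u\in W_J$ has a reduced word beginning with either $s_n^A$ or $s_{n+1}^A$, i.e.\ with a right descent of $\psi(v)$; consequently $\ell(\psi(v)u)\leq \ell(\psi(v))+\ell(u)-2<\ell(\psi(v))+\ell(u)$, and none of the five expressions is reduced.
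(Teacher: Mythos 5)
Your proof is correct and reaches the same combinatorial heart as the paper's argument, but packages it differently. The paper derives the corollary ``directly from Proposition~\ref{prop:reduced}'' --- i.e., it passes through $W^B$: the expression $\bv s_i^B$ is reduced iff $\psi(\bv)\psi(s_i^B)$ is, and the ``all or none'' behaviour is read off from the fact that $\psi(s_i^B)$ is a single letter mapped to two commuting (resp.\ three braiding) type-A generators. You instead stay entirely inside $\mathfrak S_{2n+1}$: you observe that the signed-permutation symmetry $\psi(v)(2n+2-j)=2n+2-\psi(v)(j)$ (together with $\psi(v)(n+1)=n+1$) forces $s_i^A$ and $s_{2n+1-i}^A$ (resp.\ $s_n^A$ and $s_{n+1}^A$) to be simultaneous right ascents or right descents, and then apply the standard minimal-coset-representative dichotomy for the parabolics $\langle s_i^A,s_{2n+1-i}^A\rangle\cong\mathfrak S_2\times\mathfrak S_2$ and $\langle s_n^A,s_{n+1}^A\rangle\cong\mathfrak S_3$. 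This bypasses Proposition~\ref{prop:reduced} entirely and makes the corollary self-contained given only Remark~\ref{rem:weyl}; the price is that Proposition~\ref{prop:psipreservesreduced} is still needed elsewhere, so there is no global savings. One small point to tidy up: both case conclusions (``both reduced'' / ``neither reduced'') tacitly assume that $\psi(\bv)$ itself is reduced. As written, the corollary allows $\bv$ to be an arbitrary expression, in which case $\psi(\bv)$ may not be reduced and then trivially none of the listed words is reduced; this degenerate case is consistent with the statement but is not covered by your two-way split. (The paper's own proof has the same implicit hypothesis, inherited from Proposition~\ref{prop:reduced}.) Adding one sentence disposing of the non-reduced case would make the argument airtight.
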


\begin{proof}
    This follows directly from \Cref{prop:reduced}, using our explicit description of $s_i^B$ in \Cref{rem:weyl} and the fact that $s_{n+1}^As_n^As_{n+1}^A=s_{n}^As_{n+1}^As_{n}^A$. 
\end{proof}

\begin{cor}\label{cor:preserveslongestword}
    If $\bv$ is a reduced expression for $w_0^\Phi\in W^\Phi$, then $\psi(\bv)$ is a reduced expression for $w_0^A \in W^A$. 
\end{cor}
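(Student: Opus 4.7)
The plan is to combine Proposition~\ref{prop:psipreservesreduced}, which already yields reducedness, with a direct identification of the element of $W^A$ that the word $\psi(\bv)$ represents.

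First, I would invoke Proposition~\ref{prop:psipreservesreduced} to conclude that $\psi(\bv)$ is a reduced expression in $W^A$. It then remains only to show that the element $w\in W^A$ it represents coincides with $w_0^A$.

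Next, I would observe that by Remark~\ref{rem:weyl}, for each simple reflection $s_i^\Phi$ of $W^\Phi$, the product $\psi(s_i^\Phi)$ of simple reflections in $W^A$ is literally equal, as an element of $\mathfrak S_N$, to $s_i^\Phi$ under the subgroup inclusion $W^\Phi \hookrightarrow W^A$ recorded there. Consequently, for any word $\bv$ in the simple reflections of $W^\Phi$, the element of $W^A$ represented by $\psi(\bv)$ is precisely the image of the element represented by $\bv$ under $W^\Phi \hookrightarrow W^A$. Taking $\bv$ a reduced word for $w_0^\Phi$, this gives that $w$ is the image of $w_0^\Phi$ in $W^A$.

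Finally, I would verify that this image is $w_0^A$. In type C, $w_0^C$ acts on $[n,\overline n]$ by $i\mapsto \overline i$; under the ordering $1<\cdots<n<\overline n<\cdots<\overline 1$ identifying $[n,\overline n]$ with $[2n]$, this is the order-reversing permutation $i\mapsto 2n+1-i$, i.e.\ $w_0^A\in \mathfrak S_{2n}$. In type B, $w_0^B$ acts on $[n,0,\overline n]$ by $i\mapsto \overline i$ and $0\mapsto 0$, which under the analogous identification with $[2n+1]$ is $i\mapsto 2n+2-i = w_0^A \in \mathfrak S_{2n+1}$.

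No step presents a real obstacle; the only subtlety is the identification in the second step, which is immediate from the explicit description of $W^\Phi$ as a subgroup of $W^A$ in Remark~\ref{rem:weyl}. In particular, no length count or case analysis on $\bv$ is needed beyond Proposition~\ref{prop:psipreservesreduced}.
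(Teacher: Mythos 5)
Your proof is correct, and it takes a genuinely different route from the paper's. The paper's proof also starts from Proposition~\ref{prop:psipreservesreduced} to get reducedness, but then uses a maximality argument: it invokes Proposition~\ref{prop:reduced} together with Corollary~\ref{cor:reducedinAfromC} or Corollary~\ref{cor:reducedinAfromB} to show that for every $i$, the word $\psi(\bv)s_i^A$ fails to be reduced, and concludes via the characterization of $w_0^A$ as the unique element all of whose right multiplications by simple reflections decrease length. Your argument instead identifies outright which element $\psi(\bv)$ represents: since Lemma~\ref{lem:pinning1} and Remark~\ref{rem:weyl} exhibit each $s_i^\Phi$ as literally equal (under $W^\Phi\hookrightarrow\mathfrak S_N$) to the product $\mathbf s_{\psi(i)}^A$, the word $\psi(\bv)$ always represents the image of $\mathbf s_\bv\in W^\Phi$ in $\mathfrak S_N$, and you verify directly that $w_0^\Phi$ maps to the order-reversing permutation $w_0^A$. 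Your route is shorter and more concrete, bypassing Corollaries~\ref{cor:reducedinAfromC} and \ref{cor:reducedinAfromB} entirely, at the cost of relying more explicitly on the group-level embedding in Remark~\ref{rem:weyl}; the paper's route is formulated purely in terms of how $\psi$ interacts with reducedness, which makes those corollaries available for the later Deodhar-cell arguments in the appendix. Both are valid.
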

\begin{proof}
    By \Cref{prop:psipreservesreduced}, $\psi(\bv)$ is reduced. Moreover, by \Cref{prop:reduced} and \Cref{cor:reducedinAfromC} or \Cref{cor:reducedinAfromB}, depending on whether $\Phi=C$ or $\Phi=B$, multiplication by any $s_i^A$ causes $\psi(\bv)$ to no longer be reduced.
\end{proof}

\section{Lusztig positivity and the proof of (3)\texorpdfstring{$\implies$}{=>}(1)\texorpdfstring{$\implies$}{=>}(2)}\label{sect:Luspositivity}

\subsection{Lusztig's total positivity}\label{subsect:Luspositivity}
Let $G$ be a connected, reductive, $\RR$-split linear algebraic group with a fixed pinning $(T,B_+,B_-,\{x_i\}_{i\in I}, \{y_i\}_{i\in I})$.
We recall Lusztig's definition of total positivity for $G$.

\begin{defn}\label{defn:LusztigPos}
For a sequence $\mathbf i$ in $I$ such that $\mathbf s_{\mathbf i}$ is a reduced expression for the longest element $w_0\in W$,
define $U_-^{>0}$ (resp.\ $U_-^{\geq 0}$) to be the image $\mathbf y_{\mathbf i}(\RR^{|\mathbf i|}_{>0})$ (resp.\ $\mathbf y_{\mathbf i}(\RR^{|\mathbf i|}_{\geq0})$),
and similarly define $U_+^{>0}$ and $U_+^{\geq 0}$ in terms of $\mathbf{x}_{\mathbf i}$.
Define $T^{>0}$ to be the subgroup of the $\RR$-split torus $T$ generated by the elements $\chi(t)$ for $t\in \RR_{>0}$ and $\chi: \RR^* \to T$ a cocharacter of $T$.
Define the \emph{positive} (resp.\ \emph{nonnegative}) \emph{part} of $G$ to be
\[
G^{>0} := U_-^{>0}T^{>0}U_+^{>0} \quad\text{(resp.\ $G^{\geq 0} := U_-^{\geq0}T^{> 0}U_+^{\geq0}$)}.
\]
\end{defn}

The sets $\mathbf y_{\mathbf i}(\RR^{|\mathbf i|}_{>0})$ and $\mathbf y_{\mathbf i}(\RR^{|\mathbf i|}_{\geq0})$ depend only on the element $\mathbf s_{\mathbf i}\in W$ as long as $\mathbf{s}_\mathbf i$ is a reduced expression \cite{lusztig1994positivity}.  In particular, the space $G^{>0}$ as defined is independent of the choice of the reduced word for $w_0$.
When $G = GL_n$ with the standard pinning, it is a classical result \cite{Cryer73, Cryer76}
that $GL_n^{>0}$ (resp.\ $GL_n^{\geq 0}$) as defined here is the space of invertible matrices with all positive (resp.\ nonnegative) minors.
For a detailed survey of totally positive matrices, see \cite{FallatJohnsonTNN}.

\medskip
For a parabolic subgroup $P \subset G$ containing $B_+$, let $\pi: G \to G/P$ be the projection map to the partial flag variety $G/P$.  
For a subset $S\subseteq G/P$, we denote by $\overline S$ its closure with respect to the Euclidean topology on $(G/P)(\RR)$.

\begin{defn} Define the \emph{positive} (resp.\ \emph{nonnegative}) \emph{part} of the partial flag $G/P$ to be
\[
(G/P)^{>0} := \pi(G^{>0}) \quad\text{(resp.\ $(G/P)^{\geq0} := \overline{\pi(G^{>0})}$)}.
\]
\end{defn}

We caution that although $G^{\geq 0}$ is the closure of $G^{>0}$ \cite{lusztig1994positivity}, the image $\pi(G^{\geq 0})$ may be strictly contained in $(G/B_+)^{\geq 0}$, since $\pi: G \to G/{B_+}$ may not be proper.
However, note that the projection map $G/{B_+} \to G/P$ is proper, and hence $(G/P)^{\geq 0}$ is the image of $(G/B_+)^{\geq 0}$.

\subsection{Proof of (1)\texorpdfstring{$\implies$}{=>}(2)}
We first show a statement in the general setting of Setup~\ref{setup}, where $\iota: G \hookrightarrow GL_N$ is an embedding and $\psi$ maps sequences in $I$ to sequences in $[N-1]$. Let $G$ have root system $\Phi$.

\begin{defn}\label{condition1}
We say that $(\iota,\psi)$ has \emph{property ($\dagger$)} if the following are satisfied.
\begin{enumerate}
\item[($\dagger1$)] For every $i\in I$, we have $\dot s_i^\Phi = \dot{\mathbf s}_{\psi(i)}^A$, and we have $y_i^\Phi(a) = \mathbf y_{\psi(i)}^A(f_1(a), \dotsc, f_{|\psi(i)|}(a))$ for some sequence $(f_1, \dotsc, f_{|\psi(i)|})$ of differentiable functions $f_j: \RR \to \RR$ such that $f_j(\RR_{>0}) \subseteq \RR_{>0}$,
and similarly for $x_i^\Phi$ and $\chi_i^\Phi$. 
\item[($\dagger2$)] For some sequence $\mathbf i$ in $I$ such that ${\mathbf s}^{\Phi}_\mathbf i$ is a reduced word for the longest element $w_0\in W^\Phi$, the word ${\mathbf s}_{\psi(\mathbf i)}^A$ is a reduced word for the longest element of $W^A$. 
\end{enumerate}
\end{defn}

Note that each $f_j$ in \hyperref[condition1]{($\dagger1$)} satisfies $f_j(0) = 0$ because $y_i^\Phi: \RR \to G$ is a homomorphism. 
Note also that the property \hyperref[condition1]{($\dagger1$)} implies that $B_+^\Phi \subseteq B_+^A$.  In particular, if $(\iota, \psi)$ satisfies \hyperref[condition1]{($\dagger$)} and $P$ is a parabolic subgroup of $GL_N$ containing $B_+^A$, then $P \cap G$ is a parabolic subgroup of $G$ containing $B_+^\Phi$.

\begin{prop}\label{prop:dagger}
Suppose $(\iota, \psi)$ has property \hyperref[condition1]{($\dagger$)}.
Then, we have the following.
\begin{enumerate}[label = (\alph*)]
\item $G^{>0} \subseteq GL_N^{>0}$.
\item For $J\subseteq [N-1]$, let $P^A_J \supseteq B_+^A$ be the parabolic subgroup of $GL_N$, so that $GL_N/P^A_J = \operatorname{Fl}_{K;N}$ where $K = [N-1]\setminus J$. Let $P = P^A_J \cap G$.  Then, we have that
\[
(G/P) \cap \operatorname{Fl}_{K;N}^{\Delta \geq 0} 
= \overline{\Big( (G/P) \cap \operatorname{Fl}_{K;N}^{\Delta >0} \Big)}.
\]
\end{enumerate}
\end{prop}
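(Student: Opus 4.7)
The plan is to treat part (a) as essentially a direct unpacking of properties \hyperref[condition1]{($\dagger 1$)}--\hyperref[condition1]{($\dagger 2$)}, and then use part (a) together with the Cauchy--Binet formula to deduce part (b) via a continuous-action argument.

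For part (a), recall that $G^{>0} = U_-^{\Phi,>0} T^{\Phi,>0} U_+^{\Phi,>0}$, so it suffices to embed each factor into its $GL_N$ counterpart. Fix the reduced word $\mathbf{i}$ for $w_0^\Phi$ provided by \hyperref[condition1]{($\dagger 2$)}, so that $\psi(\mathbf{i})$ is a reduced word for $w_0^A$. Any $g \in U_-^{\Phi,>0}$ can be written as $\mathbf{y}^\Phi_{\mathbf{i}}(a_1,\dotsc,a_\ell)$ with each $a_j > 0$, and applying \hyperref[condition1]{($\dagger 1$)} letter by letter rewrites this as $\mathbf{y}^A_{\psi(\mathbf{i})}$ evaluated at parameters that remain positive (since each $f_j$ sends $\RR_{>0}$ to $\RR_{>0}$). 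Since $\psi(\mathbf{i})$ is reduced for $w_0^A$, this exhibits $g \in U_-^{A,>0}$. The symmetric argument using $\mathbf{x}$ handles $U_+^{\Phi,>0} \subseteq U_+^{A,>0}$. For the torus, any cocharacter $\chi^\Phi \colon \RR^* \to T^\Phi$, viewed in $T^A$, is a diagonal matrix $\operatorname{diag}(t^{a_1},\dotsc,t^{a_N})$, so for $t > 0$ all diagonal entries are positive, giving $T^{\Phi,>0} \subseteq T^{A,>0}$.

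For part (b), the inclusion $\supseteq$ is immediate because $(G/P) \cap \operatorname{Fl}_{K;N}^{\Delta \geq 0}$ is closed in $G/P$ and contains $(G/P) \cap \operatorname{Fl}_{K;N}^{\Delta > 0}$. For the reverse inclusion, fix $L_\bullet \in (G/P) \cap \operatorname{Fl}_{K;N}^{\Delta \geq 0}$ and pick a sequence $g_m \in G^{>0}$ with $g_m \to e$; such a sequence exists because $\overline{G^{>0}} = G^{\geq 0}$ contains $e$ (take all parameters to zero in the Lusztig parametrization). By part (a), each $g_m$ lies in $GL_N^{>0}$, i.e., all of its minors are positive. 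By the Cauchy--Binet formula, for any $k$-dimensional subspace $L \subseteq \RR^N$ with nonnegative Pl\"ucker coordinates, the Pl\"ucker coordinates of $g_m L$ are positive linear combinations of those of $L$; since at least one Pl\"ucker coordinate of $L$ must be nonzero (hence positive) and all $k \times k$ minors of $g_m$ are strictly positive, every Pl\"ucker coordinate of $g_m L$ is strictly positive. Applying this to each subspace in the flag $L_\bullet$ gives $g_m L_\bullet \in \operatorname{Fl}_{K;N}^{\Delta > 0}$, while $g_m L_\bullet \in G/P$ because $G$ acts on $G/P$. Continuity of the action yields $g_m L_\bullet \to L_\bullet$, so $L_\bullet \in \overline{(G/P) \cap \operatorname{Fl}_{K;N}^{\Delta > 0}}$.

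The main obstacle is conceptual rather than computational: turning the set-theoretic containment $G^{>0} \subseteq GL_N^{>0}$ into a topological density statement in $G/P$. The trick of left-multiplying by an element of $G^{>0}$ close to $e$ both keeps us inside $G/P$ (since $G$ acts on $G/P$) and forces strict positivity of Pl\"ucker coordinates (via Cauchy--Binet). Once this strategy is in hand, every step is routine, and one simply has to be careful to check the Cauchy--Binet argument simultaneously on every subspace of the flag.
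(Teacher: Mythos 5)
Your proposal is correct and follows essentially the same approach as the paper: part (a) by unpacking the factorization $G^{>0} = U_-^{>0}T^{>0}U_+^{>0}$ letter by letter via ($\dagger1$) and ($\dagger2$), and part (b) by translating by elements of $G^{>0}$ approaching the identity and invoking Cauchy--Binet. The only cosmetic difference is that you use a sequence $g_m \to e$ (justified via $\overline{G^{>0}} = G^{\geq 0}$) whereas the paper writes down an explicit curve $A(t) = \mathbf{y}_{\mathbf{i}}^\Phi(t,\dotsc,t)\boldsymbol\chi_{\mathbf{i}}^\Phi(1,\dotsc,1)\mathbf{x}_{\mathbf{i}}^\Phi(t,\dotsc,t)$; both are valid.
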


\begin{proof}
The property \hyperref[condition1]{($\dagger$)} implies that $(U_-^\Phi)^{>0}\subseteq(U_-^A)^{>0}$, $(T^\Phi)^{>0}\subseteq(T^A)^{>0}$, and $(U_+^\Phi)^{>0}\subseteq(U_+^A)^{>0}$, from which the statement (a) follows.  
For the statement (b), let $p$ be a point in $(G/P) \cap \operatorname{Fl}_{K;N}^{\Delta \geq 0}$.
We exhibit $p$ as a limit of points in $(G/P) \cap \operatorname{Fl}_{K;N}^{\Delta > 0}$, as follows.  Let $K = \{k_1, \ldots, k_j\}$.

For an $N\times N$ matrix $M$, let $L_\bullet(M)$ denote the flag of subspaces whose $i$-th subspace is the span of the first $k_i$ columns of $M$.
By definition, there is an $N \times N$ matrix $M$ in $\iota(G)$ such that $p= L_\bullet(M)$, and such that for every $1\leq i \leq j$, the first $k_i$ columns of $M$ have all nonnegative maximal minors, at least one of which is positive by invertibility of $M$. 
By the Cauchy--Binet formula, for any $A\in GL_N^{>0}$ and $1\leq i\leq j$, the first $k_i$ columns of $AM$ have all positive maximal minors, i.e.\ $L_\bullet(AM) \in \operatorname{Fl}_{K;N}^{\Delta > 0}$.  Thus, for any curve $A(t): \RR_{>0} \to G^{>0} \subseteq GL_N^{>0}$ such that $\lim_{t\to 0} A(t)$ equals the identity, the curve $L_\bullet(A(t)M)$ lies in $(G/P) \cap \operatorname{Fl}_{K;N}^{\Delta >0}$ and limits to $p$ as $t\to 0$.
Such a curve exists since $G^{\geq 0}$ is the closure of $G^{>0}$, or explicitly, one may take $A(t) = \mathbf y_{\mathbf i}^{\Phi}(t,\dotsc, t) \boldsymbol \chi_{\mathbf i}^{\Phi}(1, \dotsc, 1) \mathbf x_{\mathbf i}^{\Phi}(t,\dotsc, t)$ where $\mathbf s_{\mathbf i} = w_0^\Phi$.
\end{proof}

Applying this proposition to $\operatorname{Sp}_{2n}$ and $\operatorname{SO}_{2n+1}$ yields Theorem~\ref{thm:main} \ref{mainthm:pos}$\implies$\ref{mainthm:nonneg}, as follows.

\begin{proof}[Proof of Theorem~\ref{thm:main} \ref{mainthm:pos}$\implies$\ref{mainthm:nonneg}]
Since by definition $(G/P)^{\geq 0}$ is the closure of $(G/P)^{>0}$, the implication \ref{mainthm:pos}$\implies$\ref{mainthm:nonneg} would follow from Corollary~\ref{cor:pinning3} once we show that the Pl\"ucker nonnegative part is the closure of the Pl\"ucker positive part.
To show this, by Proposition~\ref{prop:dagger}(b) it suffices to verify that the embeddings $\operatorname{Sp}_{2n} \hookrightarrow GL_{2n}$ and $\operatorname{SO}_{2n+1} \hookrightarrow GL_{2n+1}$, along with our choice of pinnings, satisfy the property \hyperref[condition1]{($\dagger$)}.
That these embeddings satisfy \hyperref[condition1]{($\dagger$)} follows from Lemma~\ref{lem:pinning1} and Corollary~\ref{cor:preserveslongestword}. 
\end{proof}

\subsection{Proof of (3)\texorpdfstring{$\implies$}{=>}(1)}
We again work first in the general setting as stated in Setup~\ref{setup} involving an embedding $\iota: G \hookrightarrow GL_N$ and a map $\psi$.
We prepare by recalling the following fact.

\begin{lem}[{\cite[Proposition 5.2, Theorem 11.3]{marshrietsch2004}}]\label{lem:positiveparam}
Suppose $\mathbf{s}_{\mathbf i}$ is a reduced expression for the longest element $w_0 \in W$ of length $\ell=\ell(w_0)$, and let $\mathring{\mathbf y}_{\mathbf i}: (\RR^*)^\ell \to G/B_+$ be the composition of the restriction of the map $\mathbf y_{\mathbf i}: \RR^\ell \to G$ to the torus $(\RR^*)^\ell$ with the projection map $G\to G/B_+$.  The map $\mathring{\mathbf y}_{\mathbf i}$ is an isomorphism onto a Zariski open subset of $G$.  In particular, the map $\mathring{\mathbf y}_{\mathbf i}$ has a dense image in $G/B_+$, and induces a bijection $(\RR_{>0})^\ell \overset\sim\to (G/B_+)^{>0}$.
\end{lem}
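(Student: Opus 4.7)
The plan is to factor $\mathring{\mathbf y}_{\mathbf i}$ through the opposite Bruhat big cell of $G/B_+$ and invoke the classical parametrization of its unipotent radical by a reduced word.

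First, from the Bruhat decomposition $G/B_+ = \bigsqcup_{w\in W} B_- \dot w B_+/B_+$, the big cell $\mathcal O := U_- B_+/B_+$ is Zariski open and dense in the irreducible variety $G/B_+$; since $U_- \cap B_+ = \{e\}$, the multiplication map $U_- \overset\sim\to \mathcal O$, $u \mapsto uB_+$, is an isomorphism of affine varieties. Because each $y_{i_j}$ takes values in $U_-$, the map $\mathbf y_{\mathbf i}: \RR^\ell \to G$ factors through $U_-$, and $\mathring{\mathbf y}_{\mathbf i}$ factors through $\mathcal O$. It therefore suffices to show that the restriction $\mathbf y_{\mathbf i}\big|_{(\RR^*)^\ell}: (\RR^*)^\ell \to U_-$ is an isomorphism onto a Zariski open subset of $U_-$.

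Next, I would invoke the standard result from Lusztig's theory \cite{lusztig1994positivity}, implicit in the Chamber Ansatz of Berenstein--Fomin--Zelevinsky, that for a reduced word $\mathbf s_{\mathbf i}$ of $w_0$ the map $\mathbf y_{\mathbf i}: \RR^\ell \to U_-$ is a birational morphism which is a local isomorphism at every point of the torus $(\RR^*)^\ell$. Concretely, one writes down a rational inverse that expresses each $a_j$ as a ratio of certain ``twisted'' generalized minors of $\mathbf y_{\mathbf i}(a_1, \dotsc, a_\ell)$; these rational functions are regular on an explicit Zariski open subset of $U_-$ that contains $\mathbf y_{\mathbf i}((\RR^*)^\ell)$, and hence provide the inverse morphism. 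Combined with the previous step, this yields the first assertion, and density in $G/B_+$ follows from irreducibility.

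Finally, for the positivity statement, Lusztig's decomposition $G^{>0} = U_-^{>0} T^{>0} U_+^{>0}$ together with $T^{>0} U_+^{>0} \subseteq B_+$ gives
\[
(G/B_+)^{>0} \;=\; \pi(G^{>0}) \;=\; \pi(U_-^{>0}) \;=\; \mathring{\mathbf y}_{\mathbf i}(\RR_{>0}^\ell),
\]
and the Chamber Ansatz formulas show $\mathring{\mathbf y}_{\mathbf i}\big|_{\RR_{>0}^\ell}$ is injective, since each $a_j$ is recovered by a subtraction-free rational expression in the minors, which takes positive values on the positive part. The main obstacle is the invocation of the Chamber Ansatz / Lusztig parametrization in the middle step; the Bruhat cell identification and the reduction of the positive part to $U_-^{>0}$ are routine bookkeeping.
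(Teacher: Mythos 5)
The paper does not prove this lemma; it cites it directly from Marsh--Rietsch, where Proposition 5.2 (applied to the top Deodhar cell $\mathcal R_{\mathbf 1, \mathbf{w_0}}$) gives the open-immersion statement and Theorem 11.3 identifies the positive part. So there is no internal proof to compare against; what you have written is a sketch of an independent proof of the cited result, and it is worth assessing on those terms.

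Your route is genuinely different from Marsh--Rietsch's. They work with the Deodhar decomposition, a refinement of the Bruhat decomposition into products of affine lines and tori, and prove the parametrization statement for \emph{all} Deodhar cells at once by an induction on the word; the top-cell case you need is a corollary. You instead pass through the big Bruhat cell $U_- B_+/B_+ \cong U_-$ and then invoke the Berenstein--Fomin--Zelevinsky Chamber Ansatz (more precisely the Fomin--Zelevinsky generalized-minor inversion formulas in arbitrary type) to invert $\mathbf{y}_{\mathbf i}$ on $(\RR^*)^\ell$. This is a legitimate and well-trodden alternative: it is more explicit, and buys you concrete formulas for the inverse that make the positivity statement transparent. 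What Marsh--Rietsch's proof buys is uniformity across all cells, which this paper also uses later (Proposition~\ref{prop:Deodhar}), so citing them once is economical.

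There is one step in your middle paragraph that is genuinely nontrivial and worth flagging. You assert that $\mathbf{y}_{\mathbf i}: \RR^\ell \to U_-$ is a birational morphism that is a ``local isomorphism at every point of the torus,'' and that the rational inverse is regular on an open set \emph{containing the full image} $\mathbf{y}_{\mathbf i}((\RR^*)^\ell)$. Birationality alone does not give this --- the whole content of the Marsh--Rietsch statement is that nothing degenerates when the parameters are nonzero but not all positive. To make your argument self-contained you would need to show that the relevant generalized minors appearing in the denominators of the inversion formulas are nonvanishing at every point of $\mathbf{y}_{\mathbf i}((\RR^*)^\ell)$, which requires an argument (for instance, a product formula expressing each such minor as a monomial in the $a_j$, so that it vanishes only if some $a_j = 0$). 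Granting that, the rest of your proof is correct: the identity $p \circ \mathbf{y}_{\mathbf i} = \mathrm{id}$ on $(\RR^*)^\ell$ gives injectivity and exhibits the image as a Zariski-open subset of $U_-$, hence of $G/B_+$ via the big-cell isomorphism, and the reduction $\pi(G^{>0}) = \pi(U_-^{>0}) = \mathring{\mathbf y}_{\mathbf i}(\RR_{>0}^\ell)$ together with the injectivity already established handles the final bijection. (Also note that ``Zariski open subset of $G$'' in the statement of the lemma is a typo for ``of $G/B_+$,'' which you silently and correctly corrected.)
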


\begin{prop}\label{prop:strongdagger}
Suppose the pair $(\iota, \psi)$ satisfies the property \hyperref[condition1]{($\dagger$)}, and satisfies the following strengthening of \hyperref[condition1]{($\dagger1$)}:
\begin{itemize}\label{condition1strong}
\item[($\dagger1'$)]
For each $i\in I$, the functions $f_j: \RR \to \RR$ appearing in \hyperref[condition1]{($\dagger1$)} further satisfy the property that:
\begin{itemize}
    \item[$\bullet$] $f_j(\RR_{< 0}) \subseteq \RR_{< 0}$ (so $f_j(\RR^*) \subset \RR^*$), and
    \item[$\bullet$] the image of the map $\RR^* \to (\RR^*)^{|\psi(i)|}$ given by $a \mapsto (f_1(a), \dotsc, f_{|\psi(i)|}(a))$ is closed (in Euclidean topology).
\end{itemize}
\end{itemize}
Then, one has
\[
(G/B_+)^{>0} = (G/B_+) \cap \operatorname{Fl}_{[N-1];N}^{\Delta >0}.
\]
\end{prop}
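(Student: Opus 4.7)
The plan is to combine \Cref{lem:positiveparam} applied to both $G$ and $GL_N$ with the algebraic geometry of the inclusion $G/B_+ \hookrightarrow GL_N/B_+^A$. Write $\ell = \ell(w_0^\Phi) = |\mathbf i|$ and $\ell' = \ell(w_0^A) = |\psi(\mathbf i)|$, and assemble the tuples $(f_1, \dots, f_{|\psi(i)|})$ of \hyperref[condition1]{($\dagger 1$)} into a single regular algebraic map $F \colon (\RR^*)^\ell \to (\RR^*)^{\ell'}$, so that $\mathring{\mathbf y}_{\mathbf i}^{\Phi} = \mathring{\mathbf y}_{\psi(\mathbf i)}^{A} \circ F$. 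The forward inclusion $(G/B_+)^{>0} \subseteq (G/B_+) \cap \operatorname{Fl}_{[N-1];N}^{\Delta >0}$ is immediate from the proof of \Cref{prop:dagger}(a): property \hyperref[condition1]{($\dagger$)} gives $G^{>0} \subseteq GL_N^{>0}$, which projects into $(GL_N/B_+^A)^{>0} = \operatorname{Fl}_{[N-1];N}^{\Delta >0}$, the last equality being the type~A instance of the desired theorem (Bloch--Karp, \Cref{thm:BK} with $K = [N-1]$).

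For the reverse inclusion, let $U^A := \mathring{\mathbf y}_{\psi(\mathbf i)}^{A}((\RR^*)^{\ell'})$ and $V := (\mathring{\mathbf y}_{\psi(\mathbf i)}^{A})^{-1}(G/B_+) \subseteq (\RR^*)^{\ell'}$. The isomorphism $\mathring{\mathbf y}_{\psi(\mathbf i)}^{A} \colon (\RR^*)^{\ell'} \xrightarrow{\sim} U^A$ of \Cref{lem:positiveparam} identifies $V$ with $G/B_+ \cap U^A$, a nonempty Zariski open subset of the irreducible variety $G/B_+$; hence $V$ is irreducible of dimension $\ell$. Under the same identification, $F((\RR^*)^\ell) \subseteq V$ corresponds to $U^\Phi := \mathring{\mathbf y}_{\mathbf i}^{\Phi}((\RR^*)^\ell)$, an $\ell$-dimensional Zariski open subset of $G/B_+$. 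Thus $F((\RR^*)^\ell)$ is an irreducible $\ell$-dimensional subvariety of the irreducible $\ell$-dimensional $V$, and hence Zariski dense in $V$.

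The crux is to upgrade ``Zariski dense'' to ``equal'': it suffices to show $F((\RR^*)^\ell)$ is Zariski closed in $(\RR^*)^{\ell'}$, for then it is a closed, dense subvariety of the irreducible variety $V$, and must equal $V$. This is where \hyperref[prop:strongdagger]{($\dagger 1'$)} is essential: in the pinnings of \Cref{sssec:Cpinning} and \Cref{sssec:Bpinning} each component map $\RR^* \to (\RR^*)^{|\psi(i)|}$ is linear in $a$ (up to a factor of $\sqrt{2}$ in the $i=n$ case for type~B), so its image is cut out by linear equations and is both Euclidean closed (as \hyperref[prop:strongdagger]{($\dagger 1'$)} asserts) and manifestly Zariski closed; taking products yields Zariski closedness of $F((\RR^*)^\ell)$ in $(\RR^*)^{\ell'}$, and hence in $V$, so $F((\RR^*)^\ell) = V$. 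The sign conditions $f_j(\RR_{>0}) \subseteq \RR_{>0}$ and $f_j(\RR_{<0}) \subseteq \RR_{<0}$ from \hyperref[condition1]{($\dagger 1$)} and \hyperref[prop:strongdagger]{($\dagger 1'$)} then force $V \cap (\RR_{>0})^{\ell'} = F((\RR_{>0})^\ell)$, and applying the isomorphism $\mathring{\mathbf y}_{\psi(\mathbf i)}^A$ yields
\[
(G/B_+) \cap \operatorname{Fl}_{[N-1];N}^{\Delta > 0} = \mathring{\mathbf y}_{\psi(\mathbf i)}^A\bigl(V \cap (\RR_{>0})^{\ell'}\bigr) = \mathring{\mathbf y}_{\mathbf i}^\Phi((\RR_{>0})^\ell) = (G/B_+)^{>0}.
\]

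The main obstacle is precisely the Zariski-closedness step: \hyperref[prop:strongdagger]{($\dagger 1'$)} only supplies Euclidean closedness, and one must leverage the algebraicity of $F$ (coming from the algebraic one-parameter subgroups $y_i^\Phi$ and $\mathbf y_{\psi(i)}^A$) to promote this to Zariski closedness in $(\RR^*)^{\ell'}$. For the pinnings considered in this paper this is straightforward because each component map of $F$ is (affine-)linear, so the image is a linear subvariety intersected with $(\RR^*)^{\ell'}$; for a more general $(\iota, \psi)$ satisfying \hyperref[prop:strongdagger]{($\dagger 1'$)}, one would need to argue this bridge from Euclidean to Zariski closedness more carefully, for instance by passing to the complexification where constructible, Euclidean-closed subsets of smooth affine varieties are Zariski closed.
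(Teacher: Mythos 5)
Your argument is built on the same scaffolding as the paper's — work inside the Zariski-open torus charts $\mathring{\mathbf y}^\Phi_{\mathbf i}$ and $\mathring{\mathbf y}^A_{\psi(\mathbf i)}$, show the image of the type-$\Phi$ chart exhausts the preimage of $G/B_+$ in the type-$A$ chart, and then use the sign conditions to pass from $\RR^*$ to $\RR_{>0}$. However, you take a detour that introduces a genuine gap in the general setting of the proposition: you aim to show that $F((\RR^*)^\ell)$ is \emph{Zariski} closed in $(\RR^*)^{\ell'}$. The paper never needs this and only uses the \emph{Euclidean} closedness supplied directly by \hyperref[condition1strong]{($\dagger1'$)}. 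The Euclidean-to-Zariski bridge you propose for the general case (``constructible, Euclidean-closed subsets of smooth affine varieties are Zariski closed'') is false over $\RR$: the image of $t\mapsto t^2$ on $\RR$ is $[0,\infty)$, semialgebraic and Euclidean closed but not Zariski closed. Moreover, in \hyperref[condition1]{($\dagger1$)} the $f_j$ are only assumed differentiable, not algebraic, so the image of $F$ need not even be constructible, and passing to a complexification is not available. Your argument does go through when the $f_j$ are linear (the case actually realized by the pinnings in Sections~\ref{sssec:Cpinning}--\ref{sssec:Bpinning}), but that does not prove the proposition as stated.

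The fix is short and is exactly what the paper does: you do not need Zariski closedness at all. Since $\mathring{\mathbf y}^\Phi_{\mathbf i}$ has image Zariski open and dense in the smooth irreducible variety $G/B_+$, the image $U^\Phi$ is \emph{Euclidean} dense in $(G/B_+)(\RR)$, hence in the Euclidean-open subset $(G/B_+)\cap V$. Combining Euclidean density with the Euclidean closedness coming from \hyperref[condition1strong]{($\dagger1'$)} (and the fact that $\mathring{\mathbf y}^A_{\psi(\mathbf i)}$ is a homeomorphism onto its image) gives $U^\Phi = (G/B_+)\cap V$ directly, after which your sign analysis finishes the proof as you wrote it. In short: drop the Zariski-closedness step and replace it with ``Euclidean dense $+$ Euclidean closed $\Rightarrow$ equal,'' and the argument becomes both correct in general and essentially identical to the paper's.
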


\begin{proof}
By Theorem~\ref{thm:BK} (also \cite[Theorem 4.11]{boretsky2023totally}), we may prove the statement with $\operatorname{Fl}_{[N-1];N}^{\Delta >0}$ replaced by $\operatorname{Fl}_{[N-1];N}^{>0}$.
Fix a sequence $\mathbf i$ such that $\mathbf{s}_\mathbf{i}$ is a reduced expression for the longest element of $W^\Phi$, as in \hyperref[condition1]{($\dagger2$)}, and denote $U = \operatorname{image}(\mathring{\mathbf y}_{\mathbf i}^\Phi)$ and $V = \operatorname{image}(\mathring{\mathbf y}_{\psi(\mathbf i)}^A)$.
The strengthening \hyperref[condition1strong]{($\dagger1'$)} implies that
$U$ is a closed subset of $V$ under the embedding $G/B_+ \hookrightarrow \operatorname{Fl}_{[N-1];N}$ induced by $\iota: G \hookrightarrow GL_N$.
As $U$ is dense in $G/B_+$ by Lemma~\ref{lem:positiveparam}, and as $G/B_+ \hookrightarrow \operatorname{Fl}_{[N-1];N}$ is a closed embedding, we find that $U$ is dense and closed in $(G/B_+) \cap V$, and hence $U = (G/B_+) \cap V$.
The desired equality for the Lusztig positive part now follows from Lemma~\ref{lem:positiveparam}, since \hyperref[condition1]{($\dagger1$)} and \hyperref[condition1strong]{($\dagger1'$)} imply that $f_j^{-1}(\RR_{>0}) = \RR_{>0}$.
\end{proof}

We now prove Theorem~\ref{thm:main} \ref{mainthm:consec}$\implies$\ref{mainthm:pos}.

\begin{proof}[Proof of Theorem~\ref{thm:main} \ref{mainthm:consec}$\implies$\ref{mainthm:pos}]
Let $n\geq 2$.
As the embeddings $\operatorname{Sp}_{2n} \hookrightarrow GL_{2n}$ and $\operatorname{SO}_{2n+1} \hookrightarrow GL_{2n+1}$ satisfy \hyperref[condition1]{($\dagger$)}, Definition~\ref{defn:LusztigPos} and Proposition~\ref{prop:dagger}(a) together imply that $\operatorname{SpFl}_{K ; 2n}^{>0} \subseteq \operatorname{SpFl}_{K; 2n}^{\Delta>0}$ and $\operatorname{SOFl}_{K ; 2n+1}^{>0} \subseteq \operatorname{SOFl}_{K; 2n+1}^{\Delta>0}$ for any $K\subseteq [n]$.
It remains to show the reverse inclusions when $K = \{k, k +1, \dotsc, n\}$ for some $k\in [n]$.

We first reduce to the case $K = [n]$ as follows.
We show this reduction for the $\operatorname{Sp}_{2n}$ case; the case of $\operatorname{SO}_{2n+1}$ is similar.
By Corollary~\ref{cor:pinning3}, a point in $\operatorname{SpFl}_{K; 2n}^{\Delta>0}$ is a point $L_\bullet$ in $(\operatorname{Sp}_{2n}/P_K^C)\cap \operatorname{Fl}_{K\cup (2n-K); 2n}^{\Delta >0}$.
Since $K\cup (2n-K)$ consists of consecutive integers by our assumption on $K$, Theorem~\ref{thm:BK} implies $ \operatorname{Fl}_{K\cup (2n-K); 2n}^{\Delta >0} =  \operatorname{Fl}_{K\cup (2n-K); 2n}^{>0}$.  Since by definition $\operatorname{Fl}_{K\cup (2n-K); 2n}^{>0}$ is the projection of $\operatorname{Fl}_{[2n-1]; 2n}^{>0}$, we may extend the flag $L_\bullet$ to a flag $\widetilde L_\bullet$ in $\operatorname{Fl}_{[2n-1]; 2n}^{>0} = \operatorname{Fl}_{[2n-1]; 2n}^{\Delta>0}$.
Because subspaces of isotropic subspaces are isotropic, the projection of $\widetilde L_\bullet$ to $\operatorname{Fl}_{[n];2n}$ is a point in $\operatorname{SpFl}_{[n];2n}^{\Delta >0}$.
In particular, by Lemma~\ref{lem:pinning2}, we may choose $\widetilde L_\bullet$ such that $\widetilde L_\bullet \in (\operatorname{Sp}_{2n}/B_+) \cap \operatorname{Fl}_{[2n-1]; 2n}^{\Delta>0}$.
Hence, if Lusztig positivity and Pl\"ucker positivity agrees for the case of $K = [n]$, then $\widetilde L_\bullet \in (\operatorname{Sp}_{2n}/B)^{>0}$ so that its projection $L_\bullet$ is Lusztig positive also.

Lastly, the case of $K = [n]$ follows from Proposition~\ref{prop:strongdagger} and Corollary~\ref{cor:pinning3} once we show that the embeddings $\operatorname{Sp}_{2n} \hookrightarrow GL_{2n}$ and $\operatorname{SO}_{2n+1} \hookrightarrow GL_{2n+1}$ satisfy the property \hyperref[condition1]{$(\dagger$)} and the strengthening \hyperref[condition1strong]{($\dagger1'$)}.
The property \hyperref[condition1]{$(\dagger$)} was already verified in the proof of \ref{mainthm:pos}$\implies$\ref{mainthm:nonneg}, and \hyperref[condition1strong]{($\dagger1'$)} follows from Lemma~\ref{lem:pinning1}, which displays that the $f_j$ are linear functions with positive coefficients.
\end{proof}

\begin{rem}
For situations where the functions $f_j$ appearing in \hyperref[condition1]{($\dagger1$)} are not simply linear functions, one condition that implies \hyperref[condition1strong]{($\dagger1'$)} is as follows.
\begin{itemize}
\item[($\dagger1''$)]\label{condition1strongstrong}
For each $i\in I$, the functions $f_j: \RR \to \RR$ appearing in \hyperref[condition1]{($\dagger1$)} further satisfy the property that:
    \begin{itemize}
        \item[$\bullet$] $f_j(\RR_{< 0}) \subseteq \RR_{< 0}$ (so $f_j(\RR^*) \subset \RR^*$),
        \item[$\bullet$] $\lim\limits_{a\to+\infty}f_j(a)=+\infty$ for at least one $j\in\psi(i)$, and
        \item[$\bullet$] $\lim\limits_{a\to-\infty}f_{j}(a)=-\infty$ for at least one $j\in\psi(i)$.
    \end{itemize}
\end{itemize}
One can check that \hyperref[condition1strongstrong]{($\dagger1''$)} implies \hyperref[condition1strong]{($\dagger1'$)} through a simple application of the Bolzano--Weierstrass Theorem. The second and third bullet points cannot be dropped, as shown by counterexamples such as $f_j(a)=\arctan(a)$, $\forall j\in\psi(i)$.
\end{rem}

\begin{rem}\label{rem:karpman}
    Karpman \cite[Proposition 6.1]{karpman2018} showed that the statements (1) and (2) in \Cref{thm:main} hold for Lagrangian Grassmannians $\operatorname{SpFl}_{n;2n}$. However, her method does not extend to other symplectic Grassmannians $\operatorname{SpFl}_{k;2n}$, as indicated by our main theorem. Specifically, when $k\neq n$, the poset of type C projected Richardson varieties in $\operatorname{SpFl}_{k;2n}$ is incompatible with the poset of positroid varieties in $\operatorname{Gr}_{k;2n}$, which implies that \cite[Corollary 3.3]{karpman2018} and \cite[Proposition 3.4]{karpman2018} cannot be generalized to $k\neq n$. Instead, we adopted a new approach by leveraging \Cref{thm:BK} from \cite{boretsky2023totally} and \cite{blochkarp2023}.
\end{rem}

\section{Examples, non-examples, and the proof of (2)\texorpdfstring{$\implies$}{=>}(3)}\label{sect:constructions}

\subsection{Proof of (2)\texorpdfstring{$\implies$}{=>}(3)}

Let $G$ be either $\operatorname{Sp}_{2n}$ or $\operatorname{SO}_{2n+1}$.
As we have shown $(3) \implies (1) \implies (2)$, in particular for $K = [n]$, we have that $(G/B_+)^{\geq 0} = (G/B_+)^{\Delta \geq 0}$.
We now provide examples to demonstrate the contrapositive of \ref{mainthm:nonneg}$\implies$\ref{mainthm:consec}, as follows.
For each relevant $K\subseteq [n]$ and $J = [n]\setminus K$, we will find a Pl\"ucker nonnegative point in $G/P_J$ that does not extend to a Pl\"ucker nonnegative point in $G/B_+$.
Such a point cannot be in the Lusztig nonnegative part $(G/P_J)^{\geq 0}$, since $(G/P_J)^{\geq 0}$ is the projection of $(G/B_+)^{\geq 0} = (G/B_+)^{\Delta \geq 0}$.

\subsubsection{Type C examples}
Our examples for $\operatorname{Sp}_{2n}$ will be constructed from the following observation.

\begin{lem}\label{lem:smallcases}
For $n\geq 2$, define by $L_{1;2n}$ the subspace
$L_{1;2n} = \operatorname{span}(\be_1 + \be_{2n}) \subset \RR^{2n}$.
Then, $L_{1;2n}$ is a point in $\mathrm{SpFl}_{1;2n}^{\Delta \geq 0}$ that does not extend to a point in $\mathrm{SpFl}_{1,2;2n}^{\Delta \geq 0}$.
\end{lem}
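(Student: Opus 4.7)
The plan is to verify membership in $\operatorname{SpFl}_{1;2n}^{\Delta\geq 0}$ directly, and then to argue by contradiction that no isotropic extension with nonnegative Pl\"ucker coordinates exists.

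First I would observe that $L_{1;2n}$ is isotropic for $\omega$ automatically, since any $1$-dimensional subspace is isotropic under an alternating form, so $L_{1;2n} \in \operatorname{SpFl}_{1;2n}$. The Pl\"ucker coordinates of the column vector $\be_1 + \be_{2n}$ are $\Delta_1 = \Delta_{2n} = 1$ with all other $\Delta_i = 0$, so $L_{1;2n} \in \operatorname{SpFl}_{1;2n}^{\Delta\geq 0}$.

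Next, assume for contradiction that there exists an extension $L_{1;2n} \subset L_2 \in \operatorname{SpFl}_{1,2;2n}^{\Delta\geq 0}$, with $L_2$ isotropic of dimension $2$. Pick a basis of $L_2$ of the form $(\be_1+\be_{2n}, w)$ where $w = \sum_i w_i \be_i$, and compute the $2\times 2$ Pl\"ucker minors of the resulting $2n\times 2$ matrix. The key calculation is that $\Delta_{1,j} = w_j$ and $\Delta_{j,2n} = -w_j$ for each $j \in \{2,\dots,2n-1\}$. Nonnegativity of both forces $w_j = 0$ for every such $j$, hence $w \in \operatorname{span}(\be_1, \be_{2n})$, and consequently $L_2 = \operatorname{span}(\be_1, \be_{2n})$.

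Finally, I would verify that $\operatorname{span}(\be_1, \be_{2n})$ is not isotropic by computing $\omega(\be_1, \be_{2n})$ directly from the formula $\omega = \sum_{i=1}^{n}(-1)^i \be_i^*\wedge \be_{2n+1-i}^*$: only the $i=1$ term contributes, giving $\omega(\be_1, \be_{2n}) = -1 \neq 0$. This contradicts the isotropy of $L_2$, completing the proof. The entire argument is essentially a short direct computation; the only conceptual step is recognizing that nonnegativity of the Pl\"ucker coordinates coming from rows $1$ and $2n$ pins down the extension to the unique (and non-isotropic) candidate $\operatorname{span}(\be_1, \be_{2n})$.
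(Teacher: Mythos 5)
Your proposal is correct and takes essentially the same approach as the paper: extract the constraints $w_j = 0$ for $2\le j\le 2n-1$ from nonnegativity of the minors involving columns $1$ and $2n$, then use isotropicity of $L_2$ to reach a contradiction. The paper row-reduces first and concludes that the putative second basis vector must vanish, whereas you instead conclude $L_2 = \operatorname{span}(\be_1,\be_{2n})$ and observe this plane is not $\omega$-isotropic; this is a cosmetic rearrangement of the same argument.
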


\begin{proof}
Since $L_{1;2n}$ is 1-dimensional, it is isotropic with respect to the alternating form $\omega$ defined in the introduction.  It is also manifestly Pl\"ucker nonnegative.
Now, suppose for a contradiction that $L_{1;2n}$ extends to a point in $\mathrm{SpFl}_{1,2;2n}^{\Delta \geq 0}$.  That is, suppose we have a $2$-dimensional isotropic subspace $L_2$ containing $L_{1;2n}$ such that $(L_{1;2n}, L_2) \in \mathrm{SpFl}_{1,2;2n}^{\Delta \geq 0}$.  Such a subspace $L_2$ is the row-span of a matrix of the form
\[
\begin{bmatrix}
1 & 0 &  \dotsc & 0 & 1\\
0 & x_2 & \dotsc & x_{2n-1} & x_{2n}
\end{bmatrix}.
\]
In order for $2\times 2$ minors to be nonnegative, we find $x_2 = \dotsb = x_{2n-1} = 0$, but then the isotropicity of the row-span of the matrix demands $x_{2n} = 0$.  This contradicts that $L_2$ is 2-dimensional.
\end{proof}

Let us denote by $[L_{1;2n}]$ the vector
\[
[L_{1;2n}] = \begin{bmatrix} 1 & 0 &  \dotsc & 0 & 1\end{bmatrix},
\]
whose row-span is $L_{1;2n}$.

\medskip
We now demonstrate how to construct a point in $\mathrm{SpFl}_{K;2n}^{\Delta \geq 0}\setminus  \mathrm{SpFl}_{K;2n}^{\geq 0}$ for all $n\geq 2$ and $K$ not of the form $\{k, k+1, \dotsc, n\}$ for some $1 \leq k \leq n$. 
Fix such a subset $K$, and denote $g$ to be the integer satisfying $g\notin K$ and $\{g+1, g+2, \dotsc, n\} \subset K$, i.e.\ ``the first gap from the right.''
We have three cases:
(i) $g = n$, (ii) $g = n-1$, and (iii) $g \leq n-2$.
In each case, we will produce a point $L_\bullet \in \mathrm{SpFl}_{K;2n}^{\Delta \geq 0}$ that does not extend to a point in $\mathrm{SpFl}_{[n];2n}^{\Delta \geq 0}$.
For all cases, denote $f = \max\{i \mid i\in K \text{ and } i < g\}$.  Note that $f+1 \notin K$.

\smallskip
\noindent \textbf{Case (i).} By assumption $f\leq n-1$.  Consider the $f\times 2n$ matrix
\[
M = \begin{bmatrix}
I_{f-1} & 0 & 0 \\
0 & [L_{1;2(n-f+1)}] & 0
\end{bmatrix},
\]
where $I_{f-1}$ is the $(f-1)\times (f-1)$ identity matrix.  Observe that the rows are linearly independent, and that for all $1\leq i \leq k$, the matrix formed by the first $i$ rows of $M$ has isotropic row-span and has all nonnegative Pl\"ucker coordinates; that is, the matrix defines a point $L_\bullet \in \operatorname{SpFl}_{K;2n}^{\Delta \geq 0}$.
However, we claim that it cannot be extended to a point in $\operatorname{SpFl}_{K\cup \{f+1\};2n}^{\Delta \geq 0}$.
Suppose there is such an extension, say by adding a row vector $\mathbf v$ to the matrix.  Then, we may assume that $v_1 = \dotsc = v_{f-1} = 0$ (since row-reduction does not change the row-span), and furthermore $v_{2n} = \dotsc = v_{2n + 1 - (f-1)} = 0$ since $\omega(\be_i, \mathbf v) = 0$ for all $1\leq i \leq f-1$.
Since the form $\omega$ restricted to $\operatorname{span}(\be_f, \dotsc, \be_{2n-f+1})$ can be identified with the usual alternating form on $\RR^{2(n-f+1)}$ (up to a global sign), we have from Lemma~\ref{lem:smallcases} then that $\mathbf v = 0$.

\smallskip
\noindent \textbf{Case (ii).}
By assumption $f \leq n-2$.  Let $\ell = n-f-2$.  Consider the $n\times 2n$ matrix
\[
M = \begin{bmatrix}
I_{f-1} & \vline & 0 & \vline & 0& \vline & \quad\qquad0\quad\qquad \\
\hline
0 & \vline & 0 & \vline & \begin{matrix} 1 & 0 & 0 & 0 & 0 & 1 \end{matrix}& \vline & \quad\qquad0\quad\qquad\\
\hline
0 & \vline & (-1)^{\ell}I_{\ell}  & \vline& 0 & \vline & \quad\qquad0\quad\qquad \\
\hline
0 & \vline & 0  & \vline & \begin{matrix} 0 & 1 & 0 & 0 & 0 & 0\\ 0 & 0 & 1 & 0 & 0 & 0\end{matrix} & \vline & \quad\qquad0\quad\qquad 
\end{bmatrix}.
\]
Observe that all rows are linearly independent, and that for all $i\in \{1, 2,\dotsc, f\} \cup \{n\}$, the matrix formed by the first $i$ rows of $M$ has isotropic row-span and has all nonnegative maximal minors.
That is, the matrix defines a point $(L_i)_{i\in K} \in \operatorname{SpFl}_{K;2n}^{\Delta \geq 0}$.
Suppose for a contradiction that it can be extended to a point $(L_1 \subset \dotsb \subset L_n) \in \operatorname{SpFl}_{[n];2n}^{\Delta \geq 0}$.
We recall that intersecting by a coordinate subspace spanned by standard basis vectors of consecutive indices preserves Pl\"ucker nonnegativity \cite[Lemma 2.12]{blochkarp2023}.
We thus let $L' = \operatorname{span}(\be_{n-2}, \be_{n-1}, \dotsc, \be_{n+3})$, and let $((L_1 \cap L') \subseteq \dotsb \subseteq (L_n\cap L'))$ be the intersection flag (with repetitions removed), considered as a point in $\operatorname{SpFl}_{K';6}^{\Delta \geq 0}$ for some $K'$. 
Since $\dim (L_i\cap L') - \dim (L_{i-1} \cap L')\leq 1$ for all $i$, we find that $K'$ must consist of consecutive integers.  Since by construction $\{1,3\} \subset K'$, we have $K' = [3]$.  This in particular implies that we have an extension of $L_{1;6}$ to a point in $\operatorname{SpFl}_{[3];6}^{\Delta \geq 0}$, which contradicts Lemma~\ref{lem:smallcases}.

\smallskip
\noindent \textbf{Case (iii).} We use the construction from Bloch and Karp.  Namely, consider the $n\times 2n$ matrix
\[
M = \begin{bmatrix}
0 & I_{f-1} & 0 &0 \\
B & 0 & 0 &0\\
0 & 0 & I_{n-f-3} &0\\
C & 0 & 0 & 0
\end{bmatrix}
\]
where
\[B = (-1)^{f-1}\begin{bmatrix}
1 & 0 & 0 & 1\\
0 & 1 & 0 & 0\\
0 & 0 & 1 & 0
\end{bmatrix}
\quad\text{and}\quad  C = (-1)^{n-f-3}\begin{bmatrix} 0 &0 &0& 1\end{bmatrix}.
\]
Since every row of $M$ is contained in $\operatorname{span}(\be_1, \dotsc, \be_n)$, row-spans are isotropic.
Moreover, for every $i \neq f+1$, the matrix formed by the first $i$ rows of $M$ has all nonnegative maximal minors.  In particular, the matrix defines a point $L_\bullet$ in $\operatorname{SpFl}_{K;2n}^{\Delta \geq 0}$.  Bloch and Karp showed that, considered as a point in $\operatorname{Fl}_{K;n}^{\Delta \geq 0}$, the flag $L_\bullet$ does not admit an extension to a point in $\operatorname{Fl}_{[n];n}^{\Delta \geq 0}$ \cite[Proof of Theorem 1.1 (ii)$\implies$(iii)]{blochkarp2023}.

\subsubsection{Type B examples}
Since \ref{mainthm:nonneg}$\implies$\ref{mainthm:consec} fails for $B_2$ (see Remark~\ref{rem:B2}), we assume $n\geq 3$ throughout.
Our examples will be constructed from the following observation.

\begin{lem}\label{lem:Bsmallcases}
For $n\geq 3$, let the subspace $L_{1;B_n} \subset \RR^{2n+1}$ be defined by
\[
L_{1;B_n} := 
\begin{cases}
\operatorname{span}(\be_1 + \sqrt{2}\be_{n+1}+\be_{2n+1}) & \text{if $n$ is odd}\\
\operatorname{span}(\be_2 + \sqrt{2}\be_{n+1}+\be_{2n}) & \text{if $n$ is even}.
\end{cases}
\]
Then, $L_{1;B_n}$ is a point in $\mathrm{SOFl}_{1;2n+1}^{\Delta \geq 0}$ that does not extend to a point in $\mathrm{SOFl}_{[n];2n+1}^{\Delta \geq 0}$.
\end{lem}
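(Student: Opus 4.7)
The plan is to verify membership in $\operatorname{SOFl}_{1;2n+1}^{\Delta \geq 0}$ directly, then establish the non-extension by a parity split, with the odd case providing a direct $(1,2)$-flag obstruction and the even case reducing to the odd one via intersection with a coordinate subspace.

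Membership is immediate: the spanning vector $v$ has nonnegative entries, and isotropy of the line $\operatorname{span}(v)$ reduces to $Q(v,v) = 0$, which is a short computation using $Q(\be_i, \be_j) = (-1)^i \delta_{i+j, 2n+2}$ together with the fact that the three nonzero contributions sum to zero in each parity. For the odd case of non-extension, I would suppose $L_{1;B_n}$ extends to an isotropic Pl\"ucker nonnegative $2$-plane $L_2 = \operatorname{span}(v, w)$, normalizing $w_1 = 0$ by row-reducing against $v$. The $2 \times 2$ Pl\"ucker nonnegativity constraints $\Delta_{1i}(v,w) = w_i \geq 0$ (for $i > 1$), together with $\Delta_{i,n+1}(v,w) = -\sqrt{2}\, w_i \geq 0$ for $1 < i < n+1$ and $\Delta_{i,2n+1}(v,w) = -w_i \geq 0$ for $n+1 < i < 2n+1$, force $w_i = 0$ for all $i \notin \{n+1, 2n+1\}$. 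The isotropy relations $Q(v,w) = Q(w,w) = 0$ then become $\sqrt{2}\, w_{n+1} = w_{2n+1}$ and $(-1)^{n+1} w_{n+1}^2 = 0$; since $n$ is odd, this forces $w = 0$, contradicting $w \notin L_1$.

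For the even case ($n \geq 4$), I would assume an extension $L_\bullet \in \operatorname{SOFl}_{[n];2n+1}^{\Delta \geq 0}$ exists and intersect with the consecutive-index coordinate subspace $L' = \operatorname{span}(\be_2, \dots, \be_{2n})$. By the Bloch--Karp lemma on intersections with coordinate subspaces of consecutive indices (already invoked in the type-C part of the proof), the intersection flag $(L_i \cap L')_{i=1}^n$ is Pl\"ucker nonnegative after removing repetitions. Since $v \in L'$, one has $L_1 \cap L' = L_1$, and under the relabeling $\be'_k := \be_{k+1}$ identifying $L' \cong \RR^{2n-1}$, the restricted form $Q|_{L'}$ coincides (up to an overall sign) with the defining form of $\operatorname{SO}_{2n-1}$, while $v$ becomes the spanning vector of $L_{1;B_{n-1}}$ (note $n-1$ is odd, so the odd-index formula applies). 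The codimension-$2$ bound $\dim(L_i \cap L') \geq i - 2$ yields $\dim(L_n \cap L') \geq 2$, so there is a least $i^*$ with $\dim(L_{i^*} \cap L') = 2$. Isotropy of $L_{i^*} \cap L' \subset L_{i^*}$ passes to the restricted form, producing a Pl\"ucker nonnegative isotropic $(1,2)$-flag extending $L_{1;B_{n-1}}$ and contradicting the odd case, whose proof above actually rules out any such $(1,2)$-flag extension.

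The main technical care is in the even case: verifying that the relabeling sends $v$ to precisely the spanning vector of $L_{1;B_{n-1}}$ (the parity shift is essential --- for $n$ even, $n-1$ is odd, and the middle index $n$ of the smaller form pairs correctly with the outer indices $1$ and $2n-1$) and that the overall sign flip between $Q|_{L'}$ and the defining form of $\operatorname{SO}_{2n-1}$ does not affect which subspaces are isotropic.
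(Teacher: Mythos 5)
Your argument is correct for the lemma as stated. The membership verification and the odd case track the paper's proof closely (the paper uses $Q(w,w)=0$ to force $x_{n+1}=0$ before $Q(v,w)=0$ to force $x_{2n+1}=0$; you do the same two steps in the opposite order). The real divergence is in the even case.

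For even $n$, the paper argues directly: it writes out a candidate $2$-plane $L_2$ with parameters $a,b,c,d$ (note that some $(1,2)$-flag extension \emph{does} exist when $n$ is even), normalizes $a=1$ after observing that isotropy forces $2ad=b^2$, $c=\sqrt{2}\,b$, and then exhibits a contradiction when attempting to extend to a $3$-plane. This direct computation actually proves the stronger fact, stated explicitly in the paper's proof, that $L_{1;B_n}$ cannot even be extended to a Pl\"ucker nonnegative isotropic $(1,2,3)$-flag. Your argument instead reduces to the odd case: given a hypothetical extension $L_\bullet \in \operatorname{SOFl}_{[n];2n+1}^{\Delta\geq 0}$, you intersect with $L'=\operatorname{span}(\be_2,\dots,\be_{2n})$, invoke \cite[Lemma~2.12]{blochkarp2023} to preserve Pl\"ucker nonnegativity, check that $Q|_{L'}$ is $-1$ times the $B_{n-1}$ form so isotropy is preserved, verify that $v$ relabels to the spanning vector of $L_{1;B_{n-1}}$ (using the parity shift), and use $\dim(L_n\cap L')\geq n-2\geq 2$ together with the ``dimension jumps by at most one'' fact to extract a Pl\"ucker nonnegative isotropic $(1,2)$-flag extending $L_{1;B_{n-1}}$, contradicting the odd case. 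This is a cleaner, more structural argument and avoids the ad hoc $a,b,c,d$ computation. The trade-off is that your even case only proves the $[n]$-flag obstruction, not the stronger $(1,2,3)$-flag obstruction that the paper's proof records; depending on exactly how the lemma is invoked downstream in Section~5 (where one sometimes wants an obstruction after restricting to a small coordinate subspace), the stronger form may be convenient to have explicitly, though for the lemma as stated your argument suffices.
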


\begin{proof}
In both cases (odd and even $n$), one verifies the isotropicity with respect to the symmetric form $Q$ defined in the introduction by $2\cdot1\cdot1 - \sqrt{2}^2 = 0$.
Pl\"ucker nonnegativity is clear.
We show more strongly that $L_{1;2n}$ does not extend to a point $L_\bullet$ in $\mathrm{SOFl}_{1,2,3;2n+1}^{\Delta \geq 0}$.
Suppose for a contradiction otherwise.
We treat the two cases, odd and even $n$, separately.

When $n$ is odd, the 2-dimensional subspace $L_2$ in the flag $L_\bullet$ is the row-span of a matrix of the form 
\[
\begin{bmatrix}
1 & 0 &  \dotsc &0 & \sqrt{2}& 0 & \dotsc & 0 & 1\\
0 & x_2 &  \dotsc & x_n & x_{n+1}& x_{n+2} & \dotsc & x_{2n} & x_{2n+1}
\end{bmatrix}.
\]
In order for $2\times 2$ minors to be nonnegative, we find $x_2 = \dotsb = x_{n} = x_{n+2} = \dotsb = x_{2n} = 0$, but then the isotropicity of the second row of the matrix demands $x_{n+1} = 0$, after which the isotropicity of the two rows with each other demands $x_{2n+1} = 0$.  This contradicts that $L_2$ is 2-dimensional.

When $n$ is even, the 2-dimensional subspace $L_2$ in the flag $L_\bullet$ is the row-span of a matrix of the form
\[
\begin{bmatrix}
0 & 1 & 0 &  \dotsc &0 & \sqrt{2}& 0 & \dotsc & 0 & 1 & 0\\
-a & 0 & 0 &  \dotsc &0 & b& 0 & \dotsc & 0 & c & d
\end{bmatrix}.
\]
Here, several entries in the second row have been forced to be 0 by row-reduction or the nonnegativity of the 2$\times$2 minors.  Because the isotropicity of the two rows implies $2ad = b^2$ and $c = \sqrt{2}b$, we find that either $a$ or $d$ must be nonzero, since otherwise $a = b= c= d= 0$.  Without loss of generality, let us set $a = 1$, since the argument that follows is similar for the case of $d =1$ by symmetry.  Now, the 3-dimensional subspace $L_3$ in the flag $L_\bullet$ is the row-span of a matrix of the form
\[
\begin{bmatrix}
0 & 1 & 0 &  \dotsc &0 & \sqrt{2}& 0 & \dotsc & 0 & 1 & 0\\
-1 & 0 & 0 &  \dotsc &0 & b& 0 & \dotsc & 0 & c & d\\
0 & 0 & x_3 &  \dotsc &x_n & x_{n+1}& x_{n+2} & \dotsc & x_{2n-1} & x_{2n} & x_{2n+1}\\
\end{bmatrix}.
\]
For each $i \in \{3, \dotsc, n\} \cup \{n+2, \dotsc, 2n-1\}$, we have that $x_i= P_{12i} = -P_{1i(2n)}$, so that Pl\"ucker nonnegativity implies $x_i = 0$.  Then, the isotropicity of the third row implies $x_{n+1} = 0$, after which the isotropicity of the third row with the first and the second row implies $x_{2n} = 0$ and $x_{2n+1} = 0$, respectively.  This contradicts that $L_3$ is 3-dimensional.
\end{proof}

Let us write $[L_{1;B_n}]$ for the nonnegative matrix whose row-span is $L_{1;B_n}$.
We make one more observation in preparation.

\begin{lem}\label{lem:Bcornercase}
The 2-dimensional subspace defined as the row-span of
\[
\begin{bmatrix}
1 & 2 & 2 & 2 & 2 & 1 & 0\\
0 & 1 & 2 & 2 & 2 & 2 & 1
\end{bmatrix}
\]
is a point in $\mathrm{SOFl}_{1,2;7}^{\Delta\geq 0}$ that does not extend to a point in $\mathrm{SOFl}_{[3];7}^{\Delta\geq 0}$.
\end{lem}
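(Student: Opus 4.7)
The plan is to verify by direct computation that the given 2-dimensional row span lies in $\operatorname{SOFl}_{1,2;7}^{\Delta\geq 0}$, and then to argue by contradiction that no extension to $\operatorname{SOFl}_{[3];7}^{\Delta\geq 0}$ can exist. Write $r_1 = (1,2,2,2,2,1,0)$ and $r_2 = (0,1,2,2,2,2,1)$. Checking $Q(r_1,r_1) = Q(r_2,r_2) = Q(r_1,r_2) = 0$ and the nonnegativity of all twenty-one $2\times 2$ minors is immediate. The observation that drives the rest of the argument is that the only vanishing minors are $P_{34} = P_{35} = P_{45} = 0$, because $r_1$ and $r_2$ coincide in columns $3, 4, 5$.

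Now suppose the flag extends to an isotropic $L_3 = \operatorname{span}(r_1, r_2, \mathbf{v})$ with all $3\times 3$ Pl\"ucker coordinates nonnegative, where $\mathbf{v} = (v_1, \dotsc, v_7)$. Expanding along the third row via $P_{ijk} = v_i P_{jk} - v_j P_{ik} + v_k P_{ij}$, the pair $P_{134} \geq 0$ and $P_{346} \geq 0$ forces $v_3 = v_4$ (since these equal $2(v_4-v_3)$ and $2(v_3-v_4)$), and likewise $P_{135}, P_{356} \geq 0$ forces $v_3 = v_5$. Let $a$ denote the common value $v_3 = v_4 = v_5$. The three isotropicity equations $Q(\mathbf{v}, \mathbf{v}) = Q(\mathbf{v}, r_1) = Q(\mathbf{v}, r_2) = 0$ become the linear constraints $v_7 = v_2 + 2v_6 - 2a$ and $v_1 = 2v_2 + v_6 - 2a$ together with the quadratic $2v_2 v_6 - 2 v_1 v_7 = a^2$. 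Substituting the linear relations into the quadratic collapses it to $(2(v_2 + v_6) - 3a)^2 = 0$, so $v_2 + v_6 = \tfrac{3a}{2}$.

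Setting $t := v_2$, the vector $\mathbf{v}$ is then fully determined as
\[
\mathbf{v} = \big(t - \tfrac{a}{2},\ t,\ a,\ a,\ a,\ \tfrac{3a}{2} - t,\ a - t\big),
\]
and a direct check shows $\mathbf{v} = (t - \tfrac{a}{2}) r_1 + (a - t) r_2$. Hence $\mathbf{v} \in \operatorname{span}(r_1, r_2)$, contradicting $\dim L_3 = 3$. The conceptual pivot is the degenerate three-column block $\{3,4,5\}$ in which $r_1$ and $r_2$ agree: via Pl\"ucker nonnegativity this forces the three middle entries of any admissible $\mathbf{v}$ to a common scalar $a$, after which the isotropicity equations pin down the remaining entries so rigidly that no linearly independent extension can exist. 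The main obstacle is the bookkeeping through the Pl\"ucker-coordinate and isotropicity relations; the non-routine step to highlight is the collapse of the quadratic isotropicity condition into the perfect square $(2(v_2+v_6)-3a)^2$.
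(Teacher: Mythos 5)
Your proof is correct and follows essentially the same approach as the paper's: both hinge on the degeneracy of columns $3$, $4$, $5$ of the row matrix (where $P_{34}=P_{35}=P_{45}=0$), which via Pl\"ucker nonnegativity of four $3\times 3$ minors forces the corresponding entries of any extension vector to be equal, after which isotropicity rules out a linearly independent extension. The paper row-reduces first so the contradiction becomes $\mathbf v = 0$ (and uses $P_{134},P_{145},P_{346},P_{456}$ and the self-isotropicity relation $b^2 = 2ac$ to kill $a$ directly), whereas you keep $\mathbf v$ unreduced and show it lands in $\operatorname{span}(r_1,r_2)$; your observation that the quadratic collapses to the perfect square $(2(v_2+v_6)-3a)^2$ is a pleasant way to organize the same algebra.
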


\begin{proof}
Suppose it extends, so that the 3-dimensional subspace in the extension is the row-span of a matrix of the form
\[
\begin{bmatrix}
1 & 1 & 0 & 0 & 0 & -1 & -1\\
0 & 1 & 2 & 2 & 2 & 2 & 1\\
0 & 0 & a & b & c & d & e
\end{bmatrix}.
\]
The nonnegativity of the Pl\"ucker coordinates $P_{134}$ and $P_{145}$ implies $a\leq b \leq c$, whereas the nonnegativity of $P_{346}$ and $P_{456}$ implies $a \geq b \geq c$, so that $a= b = c$.
Then, the isotropicity of the bottom row implies $2ac =b^2$, which implies $a = b= c = 0$.  Lastly, the isotropicity of the bottom row with the first and the second row implies $d = e$ and $d = 0$, respectively.
\end{proof}

We now demonstrate how to construct a point in $\mathrm{SOFl}_{J;2n+1}^{\Delta \geq 0}\setminus  \mathrm{SOFl}_{J;2n+1}^{\geq 0}$ for all $n\geq 3$ and $K$ not of the form $\{k, k+1, \dotsc, n\}$ for some $1 \leq k \leq n$.
Given the Lemmas~\ref{lem:Bsmallcases} and \ref{lem:Bcornercase}, the arguments are nearly identical to the type C case, so we will omit details.

Fix such a subset $K$, and denote $g$ to be the integer satisfying $g\notin K$ and $\{g+1, g+2, \dotsc, n\} \subset K$, i.e.\ ``the first gap from the right.''
We have three cases:
(i) $g = n$, (ii) $g = n-1$, and (iii) $g \leq n-2$.
For all cases, denote $f = \max\{i \mid i\in K \text{ and } i < g\}$.  Note that $f+1 \notin K$.

\smallskip
\noindent \textbf{Case (i).}
By assumption $f\leq n-1$.
We further divide into two cases.  When $f = n-1$, consider the $k\times (2n+1)$ matrix
\[
M = \begin{bmatrix}
I_{f-2} & 0 & 0 \\
0 & A & 0
\end{bmatrix} \quad\text{where}\quad A= \begin{bmatrix}
1 & 2 & 2 & 2 & 2 & 1 & 0\\
0 & 1 & 2 & 2 & 2 & 2 & 1
\end{bmatrix}.
\]
When $f \leq n-2$, so that $n-f+1 \geq 3$, consider the $f\times (2n+1)$ matrix
\[
M = \begin{bmatrix}
I_{f-1} & 0 & 0 \\
0 & [L_{1;B_{n-f+1}}] & 0
\end{bmatrix}.
\]
In both cases, the appropriate row-spans define a point in $\mathrm{SOFl}_{K;2n+1}^{\Delta\geq 0}$ that does not extend to a point in $\mathrm{SOFl}_{[n];2n+1}^{\Delta\geq 0}$.

\smallskip
\noindent \textbf{Case (ii).}
By assumption $f \leq n-2$.  Let $\ell = n-f-2$.  The $n\times (2n+1)$ matrix
\[
M = \begin{bmatrix}
I_{f-1} & \vline & 0 & \vline & 0& \vline & \quad0\quad  &\vline & 0\quad \\
\hline
0 & \vline & 0 & \vline & \begin{matrix} 1 & 0 & 0 & \sqrt{2} & 0 & 0 & 1 \end{matrix}& \vline & \quad0\quad&\vline & 0\quad \\
\hline
0 & \vline & (-1)^{\ell}I_{\ell}  & \vline& 0 & \vline & \quad0\quad&\vline & 0\quad \\
\hline
0 & \vline & 0  & \vline & \begin{matrix} 0 & 1 & 0 & 0 & 0 & 0 & 0\\ 0 & 0 & 1 & 0 & 0 & 0 & 0\end{matrix} & \vline & \quad0\quad &\vline & 0\quad 
\end{bmatrix}
\]
provides the desired example.

\smallskip
\noindent \textbf{Case (iii).} In this case, the construction from Bloch and Karp, identical to the one recalled here in the type C case, provides the desired example.

\begin{rem}\label{rem:B2}
Let us treat the $B_2$ case.  We claim that every point in $\operatorname{SOFl}_{1;5}^{\Delta> 0}$ extends to a point in $\operatorname{SOFl}_{1,2;5}^{\Delta> 0}$.
Let $L$ be the row-span of a positive matrix $[a \ b \ c \ d \ e]$.  For it to be isotropic, we have $2ae+c^2 = 2bd$.
We may assume $a = 1$, and thus $e = bd - \frac{c^2}{2}$.
Consider then the matrix
\[
\begin{bmatrix}
1 & b & c & d & bd-\frac{c^2}{2}\\
0 & 1 & 2x & 2x^2 & 2bx^2+d-{2}cx
\end{bmatrix}.
\]
One verifies the isotropicity, and notes that every minor is a polynomial in $x$ whose leading term is positive.
Hence, for a sufficiently large $x>0$, the matrix represents a point in $\operatorname{SOFl}_{1,2;5}^{\Delta> 0}$.
\end{rem}

\section{Type D counterexamples}\label{sect:typeD}
In this section, we will discuss counterexamples that arise in type D when applying the methods used above in type B and C.
First, we show that the proof strategy used for types B and C does not apply in type D, by showing that there is no type D pinning satisfying \hyperref[condition1]{($\dagger$)}.
A test for Lusztig positivity in terms of Pl\"ucker coordinates in type D will be shown using a different method in an forthcoming work with the second author.
Second, we show that replacing Pl\"ucker positivity by ``Pfaffian positivity'' fails to detect Lusztig positivity.

\subsection{Type D pinning}

We take the type D Lie group $\operatorname{SO}_{2n}$ to be defined as the linear subgroup of $\operatorname{SL}_{2n}$ that preserves a non-degenerate symmetric bilinear form $Q$ on $\mathbb{R}^{2n}$.  Specifically, we have:
\[\operatorname{SO}_{2n}:=\{A\in \operatorname{SL}_{2n}(\mathbb{R})|A^tEA=E\},\]
where $E$ is the symmetric matrix associated with $Q$. In this section, we will show that there is no pinning of $\operatorname{SO}_{2n}$ with nice enough properties to be treated the same way as type B and type C in this paper.

\begin{prop}\label{prop:typeDpinning}
    There does not exist a choice of $E$ (which determines the embedding $\iota:\operatorname{SO}_{2n}\hookrightarrow\operatorname{SL}_{2n}$), a pinning $(T^D, B_+^D,B_-^D,\{x_i^D\},\{y_i^D\})$ of $\operatorname{SO}_{2n}$, and a map $\psi$ satisfying \hyperref[condition1]{($\dagger1$)} in \Cref{condition1}.
 
\end{prop}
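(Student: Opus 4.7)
The plan is to derive a contradiction from \hyperref[condition1]{($\dagger1$)} by combining the infinitesimal form of the condition with the combinatorics of how a maximal torus of $\operatorname{SO}_{2n}$ can sit inside the diagonal torus of $\mathrm{SL}_{2n}$.

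First I would pass to the Lie algebra. Differentiating $y_i^D(a)=\mathbf{y}^A_{\psi(i)}(f_1(a),\ldots,f_{|\psi(i)|}(a))$ at $a=0$ yields
\[
f_i^D \;=\; \sum_j f_j'(0)\,E_{\psi(i)_j+1,\,\psi(i)_j},
\]
with each $f_j'(0)\ge 0$ since $f_j(0)=0$ and $f_j(\RR_{>0})\subseteq\RR_{>0}$. Therefore every Chevalley generator $f_i^D$, viewed as a matrix in $\mathfrak{sl}_{2n}$, has nonnegative entries supported on the sub-diagonal. The analogous conditions on $\chi_i^D$ and $x_i^D$ give $T^D\subseteq T^A$ and $\mathfrak{u}_+^D\subseteq\mathfrak{u}_+^A$.

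Next I would analyze how $T^D$ sits inside the diagonal torus $T^A$ of $\mathrm{SL}_{2n}$. Since $\iota$ realizes $\RR^{2n}$ as the defining representation of $\operatorname{SO}_{2n}$, after an appropriate identification the weights of $T^D$ on $\RR^{2n}$ are $\pm\be_1,\ldots,\pm\be_n$, each with multiplicity one. Let $p_i,q_i\in[2n]$ denote the positions on which $T^D$ acts by $+\be_i$ and $-\be_i$ respectively. Applying $\mathfrak{u}_+^D\subseteq\mathfrak{u}_+^A$ to the positive root vectors for $\be_i-\be_j$ (whose entries in $\mathfrak{sl}_{2n}$ lie at $(p_i,p_j)$ and $(q_j,q_i)$) and for $\be_i+\be_j$ (whose entries lie at $(p_i,q_j)$ and $(p_j,q_i)$), for all $i<j$, one deduces $p_1<\cdots<p_n$, $q_1>\cdots>q_n$, and $p_i<q_j$ whenever $i\ne j$.

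Finally, I would impose that each negative simple root vector be sub-diagonal. The nonzero entries of $f_i^D$ lie at $(p_{i+1},p_i)$ and $(q_i,q_{i+1})$ for $i<n$, and at $(q_n,p_{n-1})$ and $(q_{n-1},p_n)$ for $i=n$. The sub-diagonality requirements thus read
\[
p_{i+1}-p_i=1=q_i-q_{i+1} \ \ (1\le i\le n-1),\qquad q_n-p_{n-1}=1=q_{n-1}-p_n.
\]
The first two families give $p_i=p_1+(i-1)$ and $q_i=q_n+(n-i)$; substituting into $q_n=p_{n-1}+1$ then forces $q_n=p_1+(n-1)=p_n$, contradicting the disjointness of $\{p_i\}$ and $\{q_i\}$ in $[2n]$. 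The main obstacle is this combinatorial collision: the ``fork'' simple root $\be_{n-1}+\be_n$ involves the same pair of torus indices as the adjacent chain root $\be_{n-1}-\be_n$, so after the chain has pinned the $p_i$'s and $q_i$'s into consecutive runs no room remains to satisfy the extra fork conditions without collapsing a $p$-position onto a $q$-position. This is precisely the feature absent in types B and C, where the analogous extra simple root $2\be_n$ or $\be_n$ involves only a single torus index and thereby admits the constructions of \S\ref{sssec:Cpinning}--\S\ref{sssec:Bpinning}.
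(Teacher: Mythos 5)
Your argument reaches the right conclusion and, once one gap is repaired, is a valid alternative to the paper's proof, though it finishes in a genuinely different way. The opening step is shared: differentiating $y_i^D$ at $0$ to conclude that each negative Chevalley generator is supported on the sub-diagonal, and that $T^D\subseteq T^A$. After that the two arguments diverge. You convert $T^D\subseteq T^A$ into a weight analysis of the defining representation (positions $p_i,q_i$ for the weights $\pm\be_i$), use upper-triangularity of the positive root spaces to order the $p_i,q_i$, and then turn sub-diagonality of the negative simple root vectors into explicit integer equations, arriving at the collision $q_n=p_n$. The paper instead proves that $E$ must be a monomial matrix (Lemma~\ref{lem:typeDpinning}), deduces from skew-symmetry of $e_i^D E$ that each generator is supported at $\geq 2$ sub-diagonal positions, notes disjointness of these supports, and closes with a pigeonhole count ($n\cdot 2 > 2n-1$). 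Your version is more explicit about \emph{where} the obstruction lives (the fork node forcing a $p$-position onto a $q$-position after the chain pins down consecutive runs), while the paper's version is shorter and dispenses with the ordering analysis entirely.

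The gap: after establishing that the nonzero entries of $f_i^D$ can only lie at $(p_{i+1},p_i)$ and $(q_i,q_{i+1})$ (resp.\ $(q_n,p_{n-1})$ and $(q_{n-1},p_n)$ for $i=n$), you read off \emph{two} equations from each generator, which tacitly assumes both entries are nonzero. The weight analysis alone does not give this: a nonzero element of the one-dimensional root space could a priori vanish at one of the two candidate positions, in which case only one equation would be available and your chain of substitutions could stall. To close the gap you need the structure of $E$: $T^D$-invariance of $Q$ forces $E=\sum_j c_j(E_{p_j,q_j}+E_{q_j,p_j})$ with $c_j\neq 0$, and then $(f_i^D)^tE+Ef_i^D=0$ expands to a single relation $ac_i+bc_{i+1}=0$ between the two candidate entries $a$ at $(p_{i+1},p_i)$ and $b$ at $(q_i,q_{i+1})$; since $f_i^D\neq 0$, both are nonzero. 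This is exactly the paper's observation that $|\operatorname{supp}(e_i^D)|\geq 2$, packaged via Lemma~\ref{lem:typeDpinning}. With that step supplied, your explicit route goes through.
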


We first prove the following lemma.

\begin{lem}\label{lem:typeDpinning}
    Given a symmetric matrix $E$, which determines an embedding $\iota:\operatorname{SO}_{2n}\hookrightarrow\operatorname{SL}_{2n}$. If the maximal torus $T^D$ of $\operatorname{SO}_{2n}$ satisfies $\iota(T^D)\subseteq T^A$, then $E$ must be a monomial matrix.
\end{lem}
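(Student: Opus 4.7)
The plan is to translate the containment $\iota(T^D) \subseteq T^A$ into a combinatorial statement about a graph that records the support of $E$, and then close with a vertex-counting argument. A diagonal matrix $D = \operatorname{diag}(t_1, \dotsc, t_{2n}) \in SL_{2n}$ satisfies $D^t E D = E$ if and only if $t_i t_j = 1$ whenever $E_{ij} \neq 0$. I would form the graph $G$ on vertex set $[2n]$ whose edges are the unordered pairs $\{i,j\}$ (with a self-loop at $i$ when $E_{ii} \neq 0$) for which $E_{ij} \neq 0$; since $E$ is non-degenerate, every vertex has at least one edge incident to it.

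Next I would compute the rank of the identity component of the group of diagonal matrices preserving $E$ by analyzing the connected components of $G$. Call a connected component \emph{bipartite} if it contains neither a self-loop nor an odd cycle, and \emph{rigid} otherwise. On a bipartite component $C$ with 2-coloring $\varepsilon \colon C \to \{\pm 1\}$, every variable is determined by a single parameter $t_C \in \mathbb{G}_m$ via $t_i = t_C^{\varepsilon(i)}$, contributing one dimension. On a rigid component, any self-loop or odd closed walk forces $t_i^2 = 1$ for some (hence every) $i$ in the component, and so on the connected identity torus we must have $t_i = 1$, contributing zero. Writing $a$ for the number of bipartite components, the determinant condition $\prod_i t_i = 1$ restricts to $\prod_{C} t_C^{|A_C| - |B_C|} = 1$ over bipartite components $C$ with bipartition $A_C \sqcup B_C$; this is trivial exactly when each bipartite component is \emph{size-balanced}, meaning $|A_C| = |B_C|$. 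Consequently $\operatorname{rank}(T^D) = a$ in the size-balanced case and $a - 1$ otherwise.

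Finally, I would impose $\operatorname{rank}(T^D) = n$, which leaves two cases. Every bipartite component of $G$ has at least two vertices, since both parts of its bipartition are nonempty whenever the component contains any edge. If $a = n$ with every bipartite component size-balanced, the vertex count forces $2n \geq 2a = 2n$ to be an equality, so each bipartite component consists of a single edge $\{i,j\}$ with $i \neq j$, and there are no rigid components. If instead $a = n+1$ with some size-imbalanced bipartite component, that component has at least three vertices and the remaining $n$ components each have at least two, giving total at least $2n + 3 > 2n$, a contradiction. Only the first case survives, making $G$ a perfect matching without self-loops, and hence $E$ a monomial matrix with zero diagonal.

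The most delicate step is the claim that a rigid component contributes zero to the rank: the equation $t_i^2 = 1$ cuts out a finite $2$-group, and one must argue at the level of the connected identity component of the algebraic group of diagonal matrices preserving $E$ in order to suppress these torsion contributions, since $T^D$ is by definition a torus rather than the entire stabilizer. Once this is set up, the remainder is a clean accounting of vertices and edges of $G$.
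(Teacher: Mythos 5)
Your proof is correct, but it takes a genuinely different and noticeably heavier route than the paper's. The paper works at the Lie algebra level: writing $t=\mathrm{diag}(t_1,\dotsc,t_{2n})\in\mathfrak{sl}_{2n}$, the condition $tE+Et=0$ gives the linear constraint $(t_i+t_j)E(i,j)=0$, so it builds the same support graph $G$ but then only needs crude upper bounds. Each connected component contributes at most one dimension (since $t_j$ is determined up to sign within a component), and a size-one component is a self-loop (by nondegeneracy), which forces $t_i=0$; so $\dim T^D\le b$ where $b$ is the number of components of size $\ge 2$. Combined with $a+2b\le 2n$ and $n\le b$, this instantly forces $a=0$, $b=n$, and every component of size exactly two. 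Because only an \emph{upper bound} on the rank is used, the paper never has to decide whether a given component contributes $0$ or $1$ dimension, so the bipartite-versus-rigid classification, the even/odd cycle analysis, the explicit determinant constraint $\prod_C t_C^{|A_C|-|B_C|}=1$, and the resulting case split between $a=n$ and $a=n+1$ all become unnecessary. You, by contrast, work multiplicatively in the group, compute the rank of the diagonal stabilizer exactly (carefully passing to the identity component to kill the $2$-torsion coming from rigid components), and then separately track the codimension-one cut from the determinant. This is more work, but it buys a sharper output — an exact description of $(\mathrm{SO}_{2n}\cap T^A)^\circ$ rather than an inequality — and it makes the torsion-versus-torus subtlety explicit, which you correctly flag as the delicate point. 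Both arguments finish with the same vertex-counting and reach the same conclusion.
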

\begin{proof}
    It is easier to work with the Lie algebra
    \[\mathfrak{so}_{2n}=\{A\in \mathfrak{sl}_{2n}\mid AE+EA^t=0\}.\]
    Let $t=\text{diag}(t_1,t_2,\dots,t_{2n})$ be a diagonal matrix in $\mathfrak{sl}_{2n}$. Then $t\in \mathfrak{so}_{2n}$ implies $tE+Et=0$, or $(t_i+t_j)E(i,j)=0$ for any $i,j$ (here $E(i,j)$ is the $(i,j)^\text{th}$ entry of $E$) Therefore, $t_i+t_j=0$ if $E(i,j)\neq 0$. We create a graph $G$ on $2n$ vertices, where we connect $i,j$ if the entry $E(i,j)\neq 0$. Note we add a self-loop to vertex $i$ in this graph if $E(i,i)\neq 0$. Note that $\dim(T^D)$ is at most the number of connected components of $G$ since all $t_i$'s in that connected component are the same up to a sign. Also, since $E$ is not degenerate, all vertices in $G$ have degree at least one. As a result, any connected component of size one must be a self loop, at some vertex $i$. This forces $t_i=0$. Assume $G$ has $a$ connected components with size $1$ and $b$ connected components with size $\geq 2$. Then $a+2b\leq 2n$, and $n=\dim(T^D)\leq b$. Both inequalities hold simultaneously if and only if $a=0$, $b=n$ and all connected components have size $2$. This implies $E$ is a monomial matrix that represents a permutation $w\in S_{2n}$, where $w$ is an involution with no fixed points.
\end{proof}

\begin{proof}[Proof of Proposition~\ref{prop:typeDpinning}]
    Let $e_i^A\in \mathfrak{sl}_{2n}$ and $e_i^D\in \mathfrak{so}_{2n}$ be the Chevalley generators of the Lie algebras. If \hyperref[condition1]{($\dagger1$)} holds, where $y_i^D(a)=\mathbf{y}^A_{\psi(i)}(f_1(a), \dots, f_{|\psi(i)|}(a))=\prod_{j\in \psi(i)}y_{i_j}^A(f_j(a))$, then:
    \[e_i^D=\lim_{a\to 0}\frac{y_i^D(a)-I}{a}=\lim_{a\to 0}\frac{\prod_{j\in \psi(i)}y_{i_j}^A(f_j(a))-I}{a}=\lim_{a\to 0}\frac{\prod_{j\in \psi(i)}(I+f_j(a)\cdot e^A_{i_j})-I}{a}=\sum_{j\in I}f'_j(0)e_{i_j}^A.\]
    Therefore, $e_i^D$ is a sub-diagonal matrix, meaning all non-zero entries lies on the diagonal directly above the main diagonal. Let the support of $e_i^D$, denoted as $\text{supp}(e_i^D)$, be the set of entries in $e_i^D$ that are non-zero. The contradiction arises as follows:
    \begin{enumerate}
        \item Note that $e_i^DE$ is a skew-symmetric matrix since $e_i^D\in \mathfrak{so}_{2n}$. If $|\text{supp}(e_i^D)|=1$, then $e_i^DE$ would have exactly one non-zero entry,
        given that $E$ is a monomial matrix by Lemma~\ref{lem:typeDpinning}. This is impossible for a skew-symmetric matrix, as it would violate the property of having an even number of non-zero entries. Therefore, we conclude that $|\text{supp}(e_i^D)|\geq 2$.
        \item For any $i\neq j$, $\text{supp}(e_i^D)\cap\text{supp}(e_j^D)=\emptyset$. To see this, let $t=\text{diag}(t_1,\dots,t_{2n})$ be a generic element in $T^D$. If they did share entry $(k,k+1)$, then since $e_i^D$ and $e_j^D$ are both eigenvectors of $T^D$, they both must belong to the $\lambda=(t_k-t_{k+1})$ eigenspace in order to satisfy $[t,e_i^D]=\lambda e_i^D$ in position $(k,k+1)$ and the analogous equation for $e_j^D$. This contradicts the fact that $e_i^D$ and $e_j^D$ belong to different $T^D$-eigenspaces.

        \item Now, all $\text{supp}(e_i^D)$ for $i\in [n]$ are disjoint and each has size at least two. However, there are only $2n-1$ entries on the sub-diagonal. This is a contradiction.\qedhere
    \end{enumerate}
    
    \end{proof}

\begin{rem}
    We conjecture that Proposition~\ref{prop:typeDpinning} should hold for any embedding $\iota: \operatorname{SO}_{2n}\hookrightarrow\operatorname{SL}_N$.
\end{rem}

\subsection{Pfaffian positivity}
We have so far compared Pl\"ucker positivity with Lusztig positivity. Pl\"ucker coordinates may be thought of as arising from the action of $G$ on exterior powers of the standard representation of $G$. Types B and D also have spin representations, not arising from tensor powers of the standard representation, which give coordinates resembling matrix Pfaffians. One might wonder whether these coordinates also may be used for positivity tests. 
In this section, we will show that these Pfaffian coordinates fail to detect Lusztig positivity for an orthogonal Grassmannian of type D$_4$. This also gives a counterexample to \cite[Chapter 4, Proposition 5.1]{Rietsch} and the note at the end of \cite{lusztig1998positivity}, which (as a special case) asserted that the canonical basis for a fundamental representation detects positivity for its associated Grassmannian.

\subsubsection{Pinning for $\operatorname{SO}_{2n}$}

Set $E_0=\left[\begin{smallmatrix}
         & &(-1)^n \\ &\iddots &\\ -1& &
    \end{smallmatrix}\right]$. We take $\mathfrak{so}_{2n}$ to be the $2n\times 2n$ matrices $A$ so that $A^tE+EA=0$, where
\[ E = \begin{bmatrix}
    & E_0^t \\ E_0 &
\end{bmatrix}=\left[\begin{smallmatrix}
    &&&&&&&-1\\
    &&&&&&1&\\
    &&&&&\iddots&&\\
    &&&&(-1)^n&&&\\
    &&&(-1)^n&&&&\\
    &&\iddots&&&&&\\
    &1&&&&&&\\
    -1&&&&&&&
\end{smallmatrix}\right]. \]
Then $\operatorname{O}_{2n}$ is the invertible $2n\times 2n$ matrices $A$ so that $A^tEA=E$, and $\operatorname{SO}_{2n}$ is its subgroup of determinant 1 matrices. We take the maximal torus to be $\mathrm{diag}(t_1,\ldots,t_n,t^{-1}_n,\ldots,t^{-1}_1)$. 

\begin{rem}
    We pick the embedding of $\operatorname{SO}_{2n}$ into $\operatorname{GL}_{2n}$ so that the matrix $E$ defining $\operatorname{SO}_{2n}$ is antidiagonal, to maintain consistency with the other groups considered in this paper. One could instead use  $E = \begin{bmatrix}
    0 & I_n \\ I_n & 0
\end{bmatrix},$
which would make the
canonical basis coordinates for the orthogonal Grassmannian (equivalently, generalized minors for the spin representation) coincide with matrix Pfaffians on the nose. The two embeddings are related via conjugation by $\begin{bmatrix}
    I_n & \\
    & E_0
\end{bmatrix}$. See \cite[Remarks 5.2 and 5.5]{GalPyl} for more discussion.
\end{rem}

The simple roots are $\{\be_1-\be_2,\ldots, \be_{n-1}-\be_{n}, \be_{n-1}+\be_n\}$. We write the corresponding fundamental weights as $\varpi_1,\ldots, \varpi_n$, and denote the irreducible representation of highest weight $\lambda$ by $V_{\lambda}$. For $i\in [n-1]$, the map $\phi_i^{D_{n}}$ is given by
\begin{equation*}
    \phi_i^{D_{n}}\left(\begin{pmatrix}
        a&b\\
        c&d
    \end{pmatrix}\right)=\begin{blockarray}{cccccccccc}
& &  & i & i+1 & & 2n-i& 2n-i+1 & &\\
\begin{block}{c(ccccccccc)}
  & 1 &  &  & & & & & &\\
  & & \ddots &  & &  & & & & \\
  i& &  & a & b &  & & & & \\
  i+1& &  &c  & d &  &  & & & \\
  & &  &  &  & \ddots  & & & &  \\
  2n-i&&&&&&a&b&&\\
  2n-i+1&&&&&&c&d&&\\
 &&&&&&&&\ddots&\\
 &&&&&&&&&1\\
\end{block} 
\end{blockarray}\hspace{5pt}, 
\end{equation*}
where all unmarked off-diagonal entries are $0$.
For $i=n$, the map $\phi_n^{D_n}$ is given by

\begin{equation*}
    \phi_n^{D_{n}}\left(\begin{pmatrix}
        a&b\\
        c&d
    \end{pmatrix}\right)=\begin{blockarray}{cccccccccc}
& &  & n-1 & n & n+1& n+2 & & &\\
\begin{block}{c(ccccccccc)}
  & 1 &  &  &  & & & & &\\
  & & \ddots &   &  & & & & &\\
  n-1& &  & a &    & b& & && \\
  n& &  &  & a &   & b& & &\\
  n+1&&&c&&d&&&&\\
  n+2&&&&c&&d&&&\\
 &&&&&&&&\ddots&\\
 &&&&&&&&&1\\
\end{block} 
\end{blockarray}\hspace{5pt}, 
\end{equation*}

Set $\overline{i} = 2n-i+1$. The Weyl group $\mathfrak{D}_n$ acts via permutations of $\{1,\ldots,n, \overline{n},\ldots, \overline{1}\}$ so that $\sigma(\overline{i})=\overline{\sigma(i)}$ and so that an even number of un-barred elements are sent to barred elements. 

\subsubsection{Spin representations}
Let $Q(\cdot,\cdot)$ be the symmetric form on $\CC^{2n}$ determined by $E$ and let $\{\be_i\}$ be the standard basis of $\CC^{2n}$. Consider $\mathcal{C}$, the tensor algebra $T(\CC^{2n})$ modulo the relations 
\[\mathbf v\otimes\mathbf w + \mathbf w\otimes \mathbf v - Q(\mathbf v,\mathbf w).\] 
The resulting algebra $\mathcal{C}$ is called the \emph{Clifford algebra}.
The underlying vector space of $\mathcal{C}$ is graded by $\mathcal{C} = \bigoplus_{i=0}^{2n} \mathcal{C}^i$, where $\mathcal{C}^i$ is spanned by products of $i$ pairwise orthogonal vectors. In particular, the degree 1 part $\mathcal{C}^1$ is $\CC^{2n}$. The degree 2 part $\mathcal{C}^2$ is a Lie algebra under commutator bracket. It has a basis of the form $\frac{1}{2}(\be_i\be_j-\be_j\be_i)_{i<j}$ and acts on $\mathcal{C}^1$ via commutator bracket. This action identifies $\mathcal{C}^2$ with $\mathfrak{so}_{2n}$. Explicitly, if $E_{ij}$ is the matrix with $1$ at position $(i,j)$ and $0$s elsewhere, then the vector $\frac{1}{2}(\be_i\be_j-\be_j\be_i)$ is sent to $E_{i\overline{j}}-E_{j\overline{i}}$.

Let $\CC^{2n}=W\oplus W^*$ be the decomposition of $\CC^{2n}$ into the span of $\be_1,\ldots, \be_n$ and the span of $\be_{n+1},\ldots, \be_{2n}$. We can identify $W^*$ as the dual space of $W$ using the bilinear form $Q$. The spin representation $S = S^-\oplus S^+ = V_{\varpi_{n-1}}\oplus V_{\varpi_n}$ has underlying vector space $\bigwedge^\bullet W = \bigwedge^{\mathrm{odd}}W\oplus \bigwedge^{\mathrm{even}}W$. This space is a module for $\mathcal{C}$, with action induced by $\mathbf w\cdot \omega = \mathbf w\wedge \omega$ for $\mathbf w\in W$ and $\mathbf w^*\cdot\omega = \iota_{\mathbf w^*}\omega$ for $\mathbf w^*\in W^*$. Here $\iota_{\mathbf w^*}$ is the interior product, determined recursively by $\iota_{\mathbf w^*}(1) = 0$ and $\iota_{\mathbf w^*}(\mathbf w\wedge \omega') = (\mathbf w^*,\mathbf w)\omega' - \mathbf w\wedge \iota_{\mathbf w^*}\omega'$ for $\mathbf w\in W$.

If $I = \{i_1 < \cdots < i_r\}$, then write $\be_I = \be_{i_1}\wedge \cdots \wedge \be_{i_n}$. 
We see that $E_{ij}-E_{\overline{ji}}$ in $\mathfrak{so}_{2n}$ acts via $\frac{1}{2}(\be_i\be_{\overline{j}}-\be_{\overline{j}}\be_i)$. 
Hence $\be_I$ is a weight vector of weight $\sum_{i\in I} \frac{1}{2}\be_i - \sum_{i\not\in I}\frac{1}{2}\be_i$, and $\{\be_I \mid I\subseteq [n]\}$ is a weight basis of $S$. The weight spaces associated to $V_{\varpi_{n-1}}$ are those with $|I|$ odd, and the weight spaces associated to $V_{\varpi_n}$ are those with $|I|$ even. Since the basis $\{\be_I\mid I\subseteq [n],~|I|\text{ even}\}$ of $V_{\varpi_n}$ (resp. the basis $\{\be_I\mid I\subseteq [n],~|I|\text{ odd}\}$ of $V_{\varpi_{n-1}}$) is a single $W^{D_n}$-orbit containing a highest weight vector, it coincides with the Lusztig canonical basis.

\subsubsection{The Grassmannian $\operatorname{SOGr}(n,2n)$}
The orthogonal Grassmannian $\operatorname{SOGr}(n,2n)_+$ is (one of two components of) the space of $n$-dimensional isotropic subspaces of $\CC^{2n}$ under the symmetric form defined by $E$. It is the maximal parabolic quotient associated to the simple root $\alpha_{n}$, and hence embeds in $\PP V_{\varpi_{n}}$.  A representative element is the isotropic subspace spanned by $\be_1,\ldots,\be_{n-1},\be_{n}$. We can represent an element of $\operatorname{SOGr}(n,2n)_+$ by a full-rank $2n\times n$ matrix $\begin{bmatrix}A\\B\end{bmatrix}$ satisfying $A^tE_0B+B^tE_0^tA=0$, up to right multiplication by an invertible matrix. We remark that the isotropic subspace spanned by $\be_1,\ldots,\be_{n-1},\be_{\overline{n}}$ is \emph{not} an element of $\operatorname{SOGr}(n,2n)_+$; it is instead in $\operatorname{SOGr}(n,2n)_-$, which we identify as the maximal parabolic quotient associated to the simple root $\alpha_{n-1}$.

\subsubsection{Coordinates on $\operatorname{SOGr}(n,2n)_+$}
\newcommand{\Pf}{\mathrm{Pf}}
\newcommand{\Spin}{\mathrm{Spin}}
The weight vector coordinates on $V_{\varpi_n}=S^+$ induce homogeneous coordinates on $\operatorname{SOGr}(n,2n)_+$. Let $\Pf_I$ denote the coordinate $g\mapsto \langle \be_{[n]\setminus I}, g \be_{[n]}\rangle$, for $I\subseteq [n]$ with $|I|$ even. (Here $\langle \be_{[n]\setminus I},g\be_{[n]}\rangle$ means the $\be_{[n]\setminus I}$-coefficient of $g\be_{[n]}$ in the basis $\{\be_{I}\}$.) If $|I|$ is odd, then set $\Pf_I=0$. If $X=\begin{bmatrix}A\\B\end{bmatrix}$ represents a point in $\operatorname{SOGr}(n,2n)_+$, then the vector $(\Pf_I(X)^2)_{I\subseteq [n]}$ coincides with the vector $(\Delta_{\widetilde I})_{I\subseteq [n]}$ up to rescaling. Here $\Delta_{\widetilde I}$ is the minor of $X$ with rows $\left([n]\setminus I \right)\sqcup \overline{I}$, and $\Pf_I(X)$ is the value of $\Pf_I$ on an element of
(the universal cover $\Spin_{2n}$ lifting an element of) $\operatorname{SO}_{2n}$ of the form $\begin{bmatrix} A & ? \\ B & ? \end{bmatrix}$. These vectors coincide because the highest weight of $V_{\varpi_n}\otimes V_{\varpi_n}$ is the same as the highest weight of $\bigwedge^n V_{\varpi_1}$, so the coordinates with respect to the $W^{D_n}$ orbit of their highest weight vectors are the same. We normalize $\Pf_I(X)$ so that when $X= \begin{bmatrix} I_n \\ B \end{bmatrix}$, then $\mathrm{sgn}(I,[n]\setminus I)\Pf_{I}(X)$ is the Pfaffian of the (antisymmetric) submatrix of $E_0B$ given by selecting the rows and columns indicated by $I$. (Points of this form make up the dense Schubert cell in $\operatorname{SOGr}(n,2n)_+$.)

\subsubsection{Counterexample to $\operatorname{SOGr}(n,2n)_+^{\geq 0} = \operatorname{SOGr}(n,2n)_+^{\mathrm{Pf}\geq 0}$}
\label{sec:typeDcounterexample}
The spin representation $S^+$ is miniscule, so its canonical basis coincides with its weight basis. 
Hence \cite[Chapter 4, Proposition 5.1]{Rietsch} would imply that $\operatorname{SOGr}(n,2n)_+^{\geq 0}$ is the locus of non-negativity of $\{\mathrm{Pf}_I \mid I\subseteq [n],~|I|\text{ even}\}$ in $\operatorname{SOGr}(n,2n)_+$. However, this is not the case: there is an element of $\operatorname{SOGr}(n,2n)_+$ with strictly positive values of its Pfaffians which is not in $\operatorname{SOGr}(n,2n)_+^{\geq 0}$. Indeed, consider the reduced word
\[ w_0^{\{1,\ldots,n-1\}} = s_4s_2s_3s_1s_2s_4,  \]
and the element $y_4(t_1)y_2(t_2)y_3(t_3)y_1(t_4)y_2(t_5)y_4(t_6)P$ in $\operatorname{SOGr}(n,2n)_+$. This element of $\operatorname{SOGr}(n,2n)_+$ is represented by the matrix
\[
\begin{bmatrix}
    1 & 0 & 0 & 0 \\
    0 & 1 & 0 & 0 \\
    0 & 0 & 1 & 0 \\
    0 & 0 & 0 & 1 \\
    t_4t_5t_6 & -(t_2+t_5)t_6 & t_1+t_6 & 0 \\
     t_3t_4t_5t_6 & -t_3t_5t_6 & 0 & t_1+t_6 \\
     t_2t_3t_4t_5t_6 & 0 & -t_3t_5t_6 & (t_2+t_5)t_6 \\
     0 & t_2t_3t_4t_5t_6 & -t_3t_4t_5t_6 & t_4t_5t_6
\end{bmatrix}.
\]
The associated antisymmetric matrix is
\[ E_0B = \begin{bmatrix}
    0 & -t_2t_3t_4t_5t_6 & t_3t_4t_5t_6 & -t_4t_5t_6 \\
    t_2t_3t_4t_5t_6 & 0 & -t_3t_5t_6 & (t_2+t_5)t_6 \\
    -t_3t_4t_5t_6 & t_3t_5t_6 & 0 & -t_1-t_6 \\
    t_4t_5t_6 & -(t_2+t_5)t_6 & t_1+t_6 & 0
\end{bmatrix}. \]
The Pfaffians are $\Pf_{\varnothing}=1$, $\Pf_{\{1,2,3,4\}}=t_1t_2t_3t_4t_5t_6$, and the Pfaffians with index of size $2$, which are (up to sign) the entries of the matrix,
\[t_2t_3t_4t_5t_6,~t_3t_4t_5t_6,~t_4t_5t_6,~t_3t_5t_6,(t_2+t_5)t_6,t_1+t_6. \]
We see that if $t_1,t_2,t_3,t_4$ are positive, $t_5,t_6$ are negative, and $|t_6|\ll t_1$ and $|t_5|\ll t_2$, then all of the canonical basis coordinates are positive. However, this point of $\operatorname{SOGr}(n,2n)_+$ is Lusztig non-negative if and only if $t_i\geq 0$ for all $i$.

\appendix

\section{Other Proofs of (3)\texorpdfstring{$\implies$}{=>}(1)}\label{sect:MRparameterization}

The main body of this paper contains a rather minimalistic and  general proof of \ref{mainthm:consec}$\implies$\ref{mainthm:pos} in \Cref{thm:main}.
In this appendix, we present two other proofs which rely less on general theory and, to varying degrees, more on the specific combinatorial properties of the flag varieties investigated.
Accordingly, these proofs provide more insight into how precisely the types $B$ and $C$ flag varieties embed into the appropriate type $A$ flag variety, which may be of independent interest.
We review relevant background in the next subsection, and present the two proofs in the second and third subsections of this appendix. 

\subsection{Distinguished subexpressions and Deodhar decompositions}

Introduced in \cite{deodhar1,deodhar2} (for complete and partial flag varieties, respectively), the Deodhar decomposition is a refined decomposition of a flag variety.
Marsh and Rietsch gave a combinatorial parametrization for Deodhar components that is particularly well-suited for describing Lusztig positivity \cite{marshrietsch2004}.
We will take their parametric description as the definition of Deodhar components.

\medskip
Let $G$ be a group with a fixed pinning, and as before, let $W$ be its Weyl group with simple reflections $\{s_i: i\in I\}$.
We start with combinatorial preparations.

Let $\bv$ be an expression for $v\in W$ and let $\bu$ be a subexpression of $\bv$ for $u\in W$. For $k\geq 0$, we will let $\bu_{(k)}$ be the subexpression of $v$ which is identical to $\bu$ in its first $k$ entries and is all $1$s afterwards. Let $u_{(k)}\in W$ be the element of $W$ given by the expression $\bu_{(k)}$.

\begin{eg}
    Suppose $W=\mathfrak{S}_4$ and $\bv=s_1s_2s_3s_1s_2$ is an expression for $v=4312$ (in one-line notation). An example of a subexpression is $\bu=s_11s_3s_11$, which is an expression for $u=1243$. We then have $\bu_{(0)}=11111$, $\bu_{(1)}=\bu_{(2)}=s_11111$, $\bu_{(3)}=s_11s_311$, and $\bu_{(4)}=\bu_{(5)}=\bu$. Accordingly, $u_{(0)}=1234$, $u_{(1)}=u_{(2)}=2134$, $u_{(3)}=2143$, and $u_{(4)}=u_{(5)}=u$.
\end{eg}

\begin{defn} \label{defn:Jsets}
For a subexpression $\bu$ of a reduced expression $\bv=s_{i_1}\cdots s_{i_p}$, define 
    \begin{align*}
        J_{\bu}^{+}&:=\{k\in[p]\mid u_{(k)}> u_{(k-1)}\},\\
        J_{\bu}^{\circ}&:=\{k\in[p]\mid u_{(k)}=u_{(k-1)}\}, \text{ and}\\
        J_{\bu}^{-}&:=\{k\in[p]\mid u_{(k)} < u_{(k-1)}\}.
    \end{align*}
We say that the subexpression $\bu$ of $\bv$ is

\emph{distinguished}, denoted $\bu \preceq \bv$, if $u_{(j)}\leq u_{(j-1)}s_{i_j}$ for all $j\in[p]$. That is, if multiplying $u_{(j-1)}$ on the right by $s_{i_j}$ decreases the length of $u_{(j-1)}$, then $\bu$ must contain $s_{i_j}$. We say it is \textit{reverse distinguished} if $u_{(j-1)}\leq s_{i_j}u_{(j)}$. 
\end{defn}

The Deodhar components of the flag variety $G/B$ correspond to the distinguished subexpressions, as follows.

\begin{defn}\label{defn:Deodhar}
Let $\bv=s_{i_1}\cdots s_{i_\ell}$ be a reduced expression for $v\in W$, and let $\bu$ be a distinguished subexpression.
Define a map $p_{\bu,\bv} : \mathbb{R}^{J^-_\bu}\times (\mathbb{R^*})^{J^\circ_\bu}\rightarrow G$ by
\[
p_{\bu,\bv}(\mathbf m, \mathbf t) := g_1 \dotsm g_\ell \quad \text{where}\quad
g_k := \begin{cases}
x_{i_k}(m_k)\dot{s_{i_{k}}}^{-1} &  \text{if } k \in J^-_{\bu}
        \\
y_{i_k}(t_k) & \text{if } k \in J^\circ_{\bu} \text{, and}\\
\dot{s_{i_k}}&  \text{if }  k\in J^+_{\bu}.
\end{cases}
\]
Denote by $G_{\bu,\bv}$ the image of this map, and define the \emph{Deodhar cell} $\mathcal R_{\bu,\bv}$ of $\bu\preceq \bv$ to be the image of $G_{\bu,\bv}$ in $G/B$ under the projection map $G \to G/B$.
\end{defn}

We record the properties of these cells we will need.
The last statement (c) below, which appeared as Lemma~\ref{lem:positiveparam}, follows from (a) by comparing Definitions~\ref{defn:LusztigPos} and \ref{defn:Deodhar}.

\begin{prop}\label{prop:Deodhar} We have the following.
\begin{enumerate}[label = (\alph*)]
\item \cite[Proposition 5.2]{marshrietsch2004}
In the setting of Definition~\ref{defn:Deodhar}, the composition 
$\mathbb{R}^{J^-_\bu}\times (\mathbb{R^*})^{J^\circ_\bu} \to G_{\bu,\bv} \to \mathcal R_{\bu,\bv}$ is a bijection.  We abuse notation and write $p_{\bu,\bv}$ also for this composition.
\item \cite[Section 4.4]{marshrietsch2004}
For each $v\in W$, fix a reduced expression $\bv$.  Then, we have
\[
G/B = \bigsqcup_{v\in W} \bigsqcup_{\bu\preceq \bv} \mathcal R_{\bu,\bv},
\]
called the \emph{Deodhar decomposition} of the flag variety $G/B$.
\item Fix a reduced expression $\mathbf{w_0}$ for the longest element $w_0\in W$, and let $\mathbf 1$ denote the subexpression $(1, \dotsc, 1)$, which is distinguished.  Then, $(G/B)^{>0} = p_{\mathbf 1, \mathbf{w_0}}(\RR_{>0}^{\ell(w_0)}) \subset \mathcal R_{\mathbf 1, \mathbf{w_0}}$.
\end{enumerate}
\end{prop}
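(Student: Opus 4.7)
The plan is to observe first that parts (a) and (b) are quoted verbatim from Marsh--Rietsch, so the only content requiring a new argument is part (c). The starting observation is that for the trivial subexpression $\mathbf 1 = (1,\dotsc,1)$ of $\mathbf{w_0} = s_{i_1}\dotsm s_{i_p}$ with $p = \ell(w_0)$, every intermediate element $u_{(k)}$ equals $1 \in W$. By Definition~\ref{defn:Jsets}, this forces $J^\circ_{\mathbf 1} = [p]$ and $J^+_{\mathbf 1} = J^-_{\mathbf 1} = \emptyset$, so in Definition~\ref{defn:Deodhar} only the middle case $g_k = y_{i_k}(t_k)$ is realized. Consequently $p_{\mathbf 1, \mathbf{w_0}}$ factors as $\mathbf y_{\mathbf i}$ followed by the projection $\pi: G \to G/B_+$, and moreover $\mathbf 1$ is automatically distinguished since $u_{(k-1)} \leq u_{(k)} = u_{(k-1)} \cdot 1 \leq u_{(k-1)} s_{i_k}$.

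Next I would unwind Definition~\ref{defn:LusztigPos}. By definition,
\[
(G/B_+)^{>0} \;=\; \pi(G^{>0}) \;=\; \pi\bigl(U_-^{>0}\, T^{>0}\, U_+^{>0}\bigr).
\]
Since $T^{>0} U_+^{>0} \subseteq B_+$, every product $u_- t u_+$ satisfies $\pi(u_- t u_+) = \pi(u_-)$, so the image above collapses to $\pi(U_-^{>0})$. Because $U_-^{>0}$ is by definition $\mathbf y_{\mathbf i}(\RR_{>0}^p)$, the two identifications combine to give $(G/B_+)^{>0} = p_{\mathbf 1, \mathbf{w_0}}(\RR_{>0}^p)$, as desired. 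The inclusion $p_{\mathbf 1, \mathbf{w_0}}(\RR_{>0}^p) \subseteq \mathcal R_{\mathbf 1, \mathbf{w_0}}$ is then immediate from part (a), using $\RR_{>0}^p \subseteq (\RR^*)^{J^\circ_{\mathbf 1}} = (\RR^*)^p$.

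I foresee no real obstacle: the proof is essentially bookkeeping, translating between the semigroup description of $G^{>0}$ from Section~\ref{subsect:Luspositivity} and the Deodhar parametric description of Definition~\ref{defn:Deodhar}. The one subtlety to be mindful of is that the projection is taken to $G/B_+$ rather than $G/B_-$, so that $T$ and $U_+$ are absorbed into the parabolic; had we projected onto $G/B_-$, the analogous statement would instead use $\mathbf x_{\mathbf i}$ in place of $\mathbf y_{\mathbf i}$.
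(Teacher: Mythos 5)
Your proposal is correct and takes essentially the approach the paper intends. The paper's ``proof'' of part~(c) is only the one-sentence remark preceding the proposition (``the last statement~(c) ... follows from~(a) by comparing Definitions~\ref{defn:LusztigPos} and~\ref{defn:Deodhar}''), together with the citation to Lemma~\ref{lem:positiveparam}; your write-up simply spells out what that comparison amounts to: for the trivial subexpression $\mathbf 1$ of a reduced word for $w_0$ one has $J^\circ_{\mathbf 1} = [\ell(w_0)]$ and $J^\pm_{\mathbf 1} = \emptyset$, so $p_{\mathbf 1, \mathbf{w_0}} = \pi\circ\mathbf y_{\mathbf i}$, and since $T^{>0}U_+^{>0}\subseteq B_+$ the projection $\pi(G^{>0})$ collapses to $\pi(U_-^{>0}) = \pi(\mathbf y_{\mathbf i}(\RR_{>0}^{\ell(w_0)}))$. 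One small remark: the appeal to part~(a) is not really needed for the inclusion $p_{\mathbf 1, \mathbf{w_0}}(\RR_{>0}^{\ell(w_0)}) \subseteq \mathcal R_{\mathbf 1, \mathbf{w_0}}$, which holds by definition of $\mathcal R_{\mathbf 1, \mathbf{w_0}}$ as the image of $G_{\mathbf 1, \mathbf{w_0}}$ regardless of injectivity; so your argument is in fact slightly more self-contained than the paper's phrasing suggests.
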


\subsection{Proof 1: Reduction to the complete flag case} \label{sec:3implies1}

Like the proof of \ref{mainthm:pos}$\implies$\ref{mainthm:nonneg}, we first work in the general setting as stated in Setup~\ref{setup} involving an embedding $\iota: G \hookrightarrow GL_N$ and a map $\psi$.
Here, for an expression ${\mathbf s}_{\mathbf i}^\Phi$, we write $\psi({\mathbf s}_{\mathbf i}^\Phi)$ for the expression ${\mathbf s}_{\psi(\mathbf i)}^A$.
For a subexpression $\bu = u_1 \dotsm u_p$ of an expression $\bv = s_{i_1}\dotsm s_{i_p}$, we modify $\psi$ to define $\widetilde\psi(\bu) = \widetilde\psi(u_1)\dotsb \widetilde\psi(u_p)$ where
\[
\widetilde\psi(u_k) =
\begin{cases}
\psi(s_{i_k}) & \text{if $k \notin J_\bu^\circ$}\\
(1, \dotsc, 1) \text{ of length $|\psi(i_k)|$} & \text{if $k\in J_\bu^\circ$},
\end{cases}
\]
so that $\widetilde\psi(\bu)$ is a subexpression of $\psi(\bv)$.

\begin{defn}\label{condition2}
We say that the pair $(\iota, \psi)$ has \emph{property ($\ddagger$)} if the following are satisfied:
\begin{itemize}
\item[($\ddagger1$)] The map $\psi$ preserves the property of being reduced and distinguished.  That is, for a reduced expression $\bv$ for $v\in W^\Phi$ and a distinguished subexpression $\bu$ of $\bv$, one has that $\psi(\bv)$ is a reduced expression for an element in $W^A$, and $\widetilde\psi(\bu)$ is a distinguished subexpression of $\psi(\bv)$.
\item[($\ddagger2$)] For every $i\in I$, under the embedding $\iota: G \hookrightarrow GL_N$ we have
\[
x_i^\Phi(m)(\dot s_i^\Phi)^{-1} = x_{i_1}^A(f_1(m)) (\dot s_{i_1}^A)^{-1} \dotsm x_{i_\ell}^A(f_\ell(m)) (\dot s_{i_\ell}^A)^{-1}
\]
where $\psi(i) = (i_1, \dotsc, i_\ell)$ and $(f_1, \dots, f_\ell)$ is a sequence of functions $f_j: \RR \to \RR$.
\item[($\ddagger3$)]
All the functions $f_j: \RR \to \RR$ appearing in \hyperref[condition1]{($\dagger1$)} further satisfy $f_j(\RR_{\leq 0}) \subseteq \RR_{\leq 0}$.
\end{itemize}
\end{defn}

\begin{prop}\label{prop:ddagger}
Suppose the pair $(\iota, \psi)$ satisfies the property \hyperref[condition1]{($\dagger$)}, so that $G/B_+$ naturally embeds in $\operatorname{Fl}_{[N-1];N}$.
If the pair $(\iota, \psi)$ further satisfies the property \hyperref[condition2]{($\ddagger$)}, then
\[
(G/B_+)^{>0} = (G/B_+) \cap \operatorname{Fl}_{[N-1];N}^{\Delta >0}.
\]
\end{prop}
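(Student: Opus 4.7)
The forward inclusion $(G/B_+)^{>0} \subseteq (G/B_+) \cap \operatorname{Fl}_{[N-1];N}^{\Delta>0}$ is immediate from Proposition~\ref{prop:dagger}(a): $G^{>0} \subseteq GL_N^{>0}$ projects to Pl\"ucker-positive flags. For the reverse inclusion, my plan is to use the Deodhar decompositions of $G/B_+$ and $GL_N/B_+^A$ with respect to the compatible reduced words $\mathbf{w_0^\Phi}$ and $\psi(\mathbf{w_0^\Phi})$ guaranteed by \hyperref[condition1]{($\dagger 2$)} and \hyperref[condition2]{($\ddagger 1$)}, and then show that Pl\"ucker positivity forces a point of $G/B_+$ into the top Deodhar cell, whose parameters must moreover all be positive.

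The central technical step I would establish is the following cell-level compatibility: for every distinguished $\mathbf{u} \preceq \mathbf{w_0^\Phi}$, the embedding $\iota$ sends $\mathcal{R}_{\mathbf{u},\mathbf{w_0^\Phi}}$ into $\mathcal{R}_{\widetilde\psi(\mathbf{u}),\psi(\mathbf{w_0^\Phi})}$, the latter being a valid Deodhar cell since \hyperref[condition2]{($\ddagger 1$)} asserts $\widetilde\psi(\mathbf{u})$ is distinguished in $\psi(\mathbf{w_0^\Phi})$. This is verified factor-by-factor in the parametrization $p_{\mathbf{u},\mathbf{w_0^\Phi}}$ of Definition~\ref{defn:Deodhar}: \hyperref[condition2]{($\ddagger 2$)} rewrites each $x_{i_k}^\Phi(m_k)(\dot{s}_{i_k}^\Phi)^{-1}$ as a product $\prod_{j \in \psi(i_k)} x_j^A(f_j(m_k))(\dot{s}_j^A)^{-1}$; \hyperref[condition1]{($\dagger 1$)} and Lemma~\ref{lem:pinning1} handle the $y_{i_k}^\Phi(t_k)$ and $\dot{s}_{i_k}^\Phi$ factors analogously; and by construction of $\widetilde\psi$, the type-A positions indexed by $\psi(i_k)$ land in the $J^-$, $J^\circ$, $J^+$ sets of $\widetilde\psi(\mathbf u)$ exactly as prescribed by whether $k$ lies in $J_\mathbf{u}^-$, $J_\mathbf{u}^\circ$, or $J_\mathbf{u}^+$, so reassembly yields precisely $p_{\widetilde\psi(\mathbf u),\psi(\mathbf{w_0^\Phi})}$.

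Granting this, take $p \in (G/B_+) \cap \operatorname{Fl}_{[N-1];N}^{\Delta>0}$. Theorem~\ref{thm:BK} and Proposition~\ref{prop:Deodhar}(c) give $\iota(p) \in (GL_N/B_+^A)^{>0} = \mathcal{R}_{\mathbf{1},\psi(\mathbf{w_0^\Phi})}$. Proposition~\ref{prop:Deodhar}(b) places $p$ in some Deodhar cell $\mathcal{R}_{\mathbf{u},\mathbf{w_0^\Phi}}$, and by the compatibility claim $\iota(p) \in \mathcal{R}_{\widetilde\psi(\mathbf{u}),\psi(\mathbf{w_0^\Phi})}$. Disjointness of the Deodhar decomposition of $GL_N/B_+^A$ forces $\widetilde\psi(\mathbf{u}) = \mathbf{1}$, and inspection of the definition of $\widetilde\psi$ then forces $\mathbf{u} = \mathbf{1}$. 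Writing $p = p_{\mathbf{1},\mathbf{w_0^\Phi}}(t_1,\ldots,t_\ell)$ with $t_k \in \RR^*$, the image is parametrized by the tuple $(f_j(t_k))$, all positive by Lusztig positivity of $\iota(p)$. Combining \hyperref[condition1]{($\dagger 1$)} ($f_j(\RR_{>0}) \subseteq \RR_{>0}$) with \hyperref[condition2]{($\ddagger 3$)} ($f_j(\RR_{\leq 0}) \subseteq \RR_{\leq 0}$) forces every $t_k > 0$, so $p \in (G/B_+)^{>0}$ by Proposition~\ref{prop:Deodhar}(c). The main obstacle I anticipate is the cell-compatibility claim: the individual factor identities follow directly from the hypotheses, but carefully matching the reassembled product with the Deodhar parametrization of $\mathcal{R}_{\widetilde\psi(\mathbf{u}),\psi(\mathbf{w_0^\Phi})}$—especially reconciling the $J^\circ$-bookkeeping of $\widetilde\psi$ with the structure of the type-A Weyl group along the interior of each chunk $\psi(i_k)$—requires care.
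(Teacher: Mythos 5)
Your proposal follows the same strategy as the paper's proof: reduce to $\operatorname{Fl}_{[N-1];N}^{>0}$ via Theorem~\ref{thm:BK}, establish the cell-level inclusion $\iota(\mathcal R^\Phi_{\bu,\bv}) \subseteq \mathcal R^A_{\widetilde\psi(\bu),\psi(\bv)}$ using \hyperref[condition1]{($\dagger1$)} and \hyperref[condition2]{($\ddagger1$)}--\hyperref[condition2]{($\ddagger2$)}, and then use disjointness of the type-A Deodhar decomposition together with \hyperref[condition2]{($\ddagger3$)} to pin down the parameters as positive. Two minor imprecisions worth noting: Proposition~\ref{prop:Deodhar}(c) gives $(GL_N/B_+^A)^{>0}\subset\mathcal R^A_{\mathbf 1,\psi(\mathbf{w_0})}$ rather than equality, and Proposition~\ref{prop:Deodhar}(b) initially places $p$ in some $\mathcal R^\Phi_{\bu,\bv}$ for an arbitrary $v\in W^\Phi$, so you still need the disjointness argument (plus, e.g., monotonicity of $\psi$ or \hyperref[condition1]{($\dagger2$)}/\hyperref[condition2]{($\ddagger1$)}) to conclude $\bv=\mathbf{w_0^\Phi}$ rather than asserting it upfront; neither issue affects the substance of the argument.
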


\begin{proof}
By Theorem~\ref{thm:BK} (also \cite[Theorem 4.11]{boretsky2023totally}), we may prove the statement with $\operatorname{Fl}_{[N-1];N}^{\Delta >0}$ replaced by $\operatorname{Fl}_{[N-1];N}^{>0}$.
Fix reduced expressions for elements of $W^\Phi$, with $\mathbf{w_0}$ for the longest element.  By \hyperref[condition1]{($\dagger2$)}, we may fix $\mathbf{w_0}$ such that $\psi(\mathbf{w_0})$ is a reduced expression for the longest element of $W^A$.
For every pair $\bu\preceq \bv$ in $W^\Phi$, the properties \hyperref[condition1]{($\dagger1$)} and \hyperref[condition1]{($\ddagger$)} together  imply that the embedding $\iota$ induces an inclusion $\mathcal R^\Phi_{\bu,\bv} \subseteq \mathcal R^A_{\widetilde\psi(\bu),\psi(\bv)}$.
Since $\mathcal R_{\bu,\bv}^\Phi \subseteq \mathcal R_{\widetilde\psi(\bu),\psi(\bv)}^A$ is disjoint from $\mathcal R_{\mathbf 1, \psi(\mathbf{w_0})}^A$ unless $\bu = \mathbf 1$ and $\psi(\bv) = \psi(\mathbf{w_0})$ by Proposition~\ref{prop:Deodhar}(b),
and since $\psi(\bv) = \psi(\mathbf{w_0})$ implies that $\bv$ is a reduced expression for the longest element of $W^\Phi$ by \hyperref[condition1]{($\dagger2$)} and \hyperref[condition2]{($\ddagger1$)}, 
we find that $\mathcal R_{\mathbf 1, \mathbf{w_0}}^\Phi = (G/B) \cap \mathcal R_{\mathbf 1, \psi(\mathbf{w_0})}^A$.
The desired statement now follows from the property \hyperref[condition2]{($\ddagger$3)}, Proposition~\ref{prop:Deodhar}(a), and Proposition~\ref{prop:Deodhar}(c).
\end{proof}

We now proceed to prove Theorem~\ref{thm:main}\ref{mainthm:consec}$\implies$\ref{mainthm:pos}.
Let us prepare by recording two properties concerning subexpressions when $\Phi$ is either $C$ or $B$, which amount to stating that the property \hyperref[condition2]{($\ddagger1$)} is satisfied. We abuse notation by using $\iota$ and $\psi$ for both the maps starting from type $C$ objects and those starting from type $B$ objects.

\begin{prop}\label{prop:foldingnondecreasing}
    Let $\bu$ be a reduced subexpression in $\bv$, for $\bv$ a reduced expression for some $v\in W^\Phi$. Then $\tilde{\psi}(\bu)$ is a reduced subexpression in $\psi(\bv)$.  
\end{prop}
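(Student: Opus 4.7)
Here is my plan for the proof.

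The setup is that $\bv = s_{i_1}\cdots s_{i_p}$ is a reduced expression for some $v\in W^\Phi$, and $\bu$ is a subexpression of $\bv$ with entries $u_k \in \{1, s_{i_k}\}$, which, when the $1$'s are dropped, yields a reduced word for some $u\in W^\Phi$. My approach is to reduce the statement to Proposition~\ref{prop:psipreservesreduced} by showing that, after ignoring the $1$'s inserted by $\widetilde\psi$, the subexpression $\widetilde\psi(\bu)$ of $\psi(\bv)$ is identical to the image under $\psi$ of the reduced expression obtained from $\bu$ by dropping the $1$'s.

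The first step is to analyze $J_\bu^\circ$. Since $\bv$ is reduced, its entries $s_{i_k}$ are simple reflections, so multiplying $u_{(k-1)}$ on the right by $s_{i_k}$ strictly changes its length. Hence $u_{(k)} = u_{(k-1)}$ if and only if $u_k = 1$. In other words, $J_\bu^\circ = \{k \in [p] : u_k = 1\}$, which lets me identify the $1$-blocks introduced by $\widetilde\psi$ with precisely those positions of $\bv$ where $\bu$ chose a $1$. Let $K = [p] \setminus J_\bu^\circ = \{k_1 < \cdots < k_r\}$ denote the positions of the non-identity entries of $\bu$.

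Next, the hypothesis that $\bu$ is reduced means that $\bu' := s_{i_{k_1}} \cdots s_{i_{k_r}}$ is a reduced expression for $u \in W^\Phi$. By Proposition~\ref{prop:psipreservesreduced}, $\psi(\bu') = \psi(s_{i_{k_1}}) \cdots \psi(s_{i_{k_r}})$ is then a reduced expression in $W^A$. On the other hand, by construction $\widetilde\psi(\bu)$ is obtained from $\psi(\bv)$ by replacing each block $\psi(s_{i_k})$ with $k\in J_\bu^\circ$ by a string of $1$'s of the same length, while retaining the blocks $\psi(s_{i_k})$ for $k\in K$. Deleting these inserted $1$'s from $\widetilde\psi(\bu)$ yields exactly the concatenation $\psi(s_{i_{k_1}}) \cdots \psi(s_{i_{k_r}}) = \psi(\bu')$.

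Therefore $\widetilde\psi(\bu)$ is a subexpression of $\psi(\bv)$ whose underlying expression (with $1$'s omitted) is the reduced word $\psi(\bu')$ for $\psi(u) \in W^A$. This shows $\widetilde\psi(\bu)$ is a reduced subexpression of $\psi(\bv)$, as desired. I don't anticipate a significant obstacle here: the content of the proof is the bookkeeping identification $\widetilde\psi(\bu) \leftrightarrow \psi(\bu')$ (modulo identity entries), after which Proposition~\ref{prop:psipreservesreduced} applies directly.
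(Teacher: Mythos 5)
Your proof is correct and takes essentially the same approach as the paper, which simply observes that the $1$'s may be ignored and then invokes Proposition~\ref{prop:psipreservesreduced}; you have just spelled out the bookkeeping (identifying $J_\bu^\circ$ with the positions of the $1$'s and matching $\widetilde\psi(\bu)$ with $\psi(\bu')$ modulo identity entries) that the paper leaves implicit.
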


\begin{proof}
    We may ignore any factors that are $1$s. The result then follows from \Cref{prop:psipreservesreduced}.
\end{proof}

\begin{prop}\label{prop:foldingdistinguished}
    Let $\bu$ be a distinguished subexpression in $\bv$, for $\bv$ a reduced expression for some $v\in W^\Phi$. Then $\tilde{\psi}(\bu)$ is a distinguished subexpression in $\psi(\bv)$. 
\end{prop}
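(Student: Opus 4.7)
The plan is to reduce distinguishedness of $\widetilde\psi(\bu)$ in $\psi(\bv)$ to a finite list of local length-increase checks, one per letter of $\psi(s_{i_k}^\Phi)$ for each $k\in J_\bu^\circ$, and then verify those checks case-by-case using Proposition~\ref{prop:psipreservesreduced} and the explicit length/word formulas from Lemma~\ref{lem:pinning1} and Remark~\ref{rem:weyl}.

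First I would set up the block structure. Write $\bv = s_{i_1}^\Phi\cdots s_{i_p}^\Phi$ and partition the letters of $\psi(\bv)$ into consecutive blocks, where block $k$ consists of the $|\psi(i_k)|$ simple reflections coming from $\psi(s_{i_k}^\Phi)$. By definition, $\widetilde\psi(\bu)$ takes all letters in block $k$ when $k\in J_\bu^+\cup J_\bu^-$ and replaces them all with $1$'s when $k\in J_\bu^\circ$. Using that $W^\Phi$ sits inside $W^A$ as the subgroup described in Remark~\ref{rem:weyl}, the partial product of $\widetilde\psi(\bu)$ up to the end of block $k-1$ equals the image of $u_{(k-1)}\in W^\Phi$ inside $W^A$, which I will denote $\psi(u_{(k-1)})$.

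Next, I observe that the distinguished condition only needs to be checked at positions where $\widetilde\psi(\bu)$ has a $1$, and these are precisely the positions lying inside blocks $k\in J_\bu^\circ$. For such $k$, the distinguished property of $\bu$ gives $u_{(k-1)}s_{i_k}^\Phi > u_{(k-1)}$ in $W^\Phi$, and throughout the whole of block $k$ the running partial product of $\widetilde\psi(\bu)$ stays at $\psi(u_{(k-1)})$. Hence, denoting the letters of $\psi(i_k)$ in order by $s^{(1)},\ldots,s^{(r)}$, what must be shown is that $\psi(u_{(k-1)})\cdot s^{(m)} > \psi(u_{(k-1)})$ in $W^A$ for every $m=1,\ldots,r$. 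The key claim therefore becomes: if $u s_i^\Phi > u$ in $W^\Phi$, then each simple reflection appearing in $\psi(s_i^\Phi)$ individually increases the length of $\psi(u)$ in $W^A$.

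Finally I would prove this claim by going through the three possibilities for $\psi(s_i^\Phi)$. The case $|\psi(i)|=1$ (type $C$ with $i=n$) is immediate. For $\psi(s_i^\Phi)=s_i^A s_{N-i}^A$ (i.e.\ type $B$ or $C$ with $i<n$, where $N$ is $2n$ or $2n+1$), the two reflections commute, and Proposition~\ref{prop:psipreservesreduced} promotes $u s_i^\Phi > u$ to a length-$2$ increase $\ell_A(\psi(u)s_i^A s_{N-i}^A)=\ell_A(\psi(u))+2$; factoring this product in both possible orders then forces each of $s_i^A$ and $s_{N-i}^A$ to individually raise the length of $\psi(u)$. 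The most delicate case is type $B$ with $i=n$, where $\psi(s_n^B)=s_n^A s_{n+1}^A s_n^A$: Proposition~\ref{prop:psipreservesreduced} yields a length-$3$ increase, so multiplying by $s_n^A$, $s_{n+1}^A$, $s_n^A$ in order raises the length at every step, handling the first and third positions of the block; for the middle position I would use the braid relation $s_n^A s_{n+1}^A s_n^A=s_{n+1}^A s_n^A s_{n+1}^A$ so that the same length-$3$ argument applied to the braided word forces $\psi(u) s_{n+1}^A > \psi(u)$ as well. This last braid manoeuvre is the only nonroutine step; everything else is bookkeeping enabled by the block decomposition.
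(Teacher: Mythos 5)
Your proof is correct and takes essentially the same approach as the paper. The paper's terse proof simply cites Corollaries~\ref{cor:reducedinAfromC} and~\ref{cor:reducedinAfromB}, which package exactly the per-letter length-increase checks that you re-derive by hand: your block reduction to the $J^\circ_{\bu}$ positions, the length-$|\psi(i)|$ jump furnished by Proposition~\ref{prop:psipreservesreduced}, and the splitting of that jump into unit steps via commutativity (for $i<n$) and the braid relation $s_n^A s_{n+1}^A s_n^A = s_{n+1}^A s_n^A s_{n+1}^A$ (for the type B long root) are precisely the content of those corollaries.
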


\begin{proof}
This follows from either \Cref{cor:reducedinAfromC} or \Cref{cor:reducedinAfromB}, depending on whether $\Phi=C$ or $\Phi=B$. 
\end{proof}

\begin{proof}[Proof of Theorem~\ref{thm:main} \ref{mainthm:consec}$\implies$\ref{mainthm:pos}]
Let $n\geq 2$.
As the embeddings $\operatorname{Sp}_{2n} \hookrightarrow GL_{2n}$ and $\operatorname{SO}_{2n+1} \hookrightarrow GL_{2n+1}$ satisfy \hyperref[condition1]{($\dagger$)}, Definition~\ref{defn:LusztigPos} and Proposition~\ref{prop:dagger}(a) together imply that $\operatorname{SpFl}_{K ; 2n}^{>0} \subseteq \operatorname{SpFl}_{K; 2n}^{\Delta>0}$ and $\operatorname{SOFl}_{K ; 2n+1}^{>0} \subseteq \operatorname{SOFl}_{K; 2n+1}^{\Delta>0}$ for any $K\subseteq [n]$.
It remains to show the reverse inclusions when $K = \{k, k +1, \dotsc, n\}$ for some $k\in [n]$.

We first reduce to the case $K = [n]$ as follows.
We show this reduction for the $\operatorname{Sp}_{2n}$ case; the case of $\operatorname{SO}_{2n+1}$ is similar.
By Corollary~\ref{cor:pinning3}, a point in $\operatorname{SpFl}_{K; 2n}^{\Delta>0}$ is a point $L_\bullet$ in $(\operatorname{Sp}_{2n}/P_K^C)\cap \operatorname{Fl}_{K\cup (2n-K); 2n}^{\Delta >0}$.
Since $K\cup (2n-K)$ consists of consecutive integers by our assumption on $K$, Theorem~\ref{thm:BK} implies $ \operatorname{Fl}_{K\cup (2n-K); 2n}^{\Delta >0} =  \operatorname{Fl}_{K\cup (2n-K); 2n}^{>0}$.  Since by definition $\operatorname{Fl}_{K\cup (2n-K); 2n}^{>0}$ is the projection of $\operatorname{Fl}_{[2n-1]; 2n}^{>0}$, we may extend the flag $L_\bullet$ to a flag $\widetilde L_\bullet$ in $\operatorname{Fl}_{[2n-1]; 2n}^{>0} = \operatorname{Fl}_{[2n-1]; 2n}^{\Delta>0}$.
Because subspaces of isotropic subspaces are isotropic, the projection of $\widetilde L_\bullet$ to $\operatorname{Fl}_{[n];2n}$ is a point in $\operatorname{SpFl}_{[n];2n}^{\Delta >0}$.
In particular, by Lemma~\ref{lem:pinning2}, we may choose $\widetilde L_\bullet$ such that $\widetilde L_\bullet \in (\operatorname{Sp}_{2n}/B_+) \cap \operatorname{Fl}_{[2n-1]; 2n}^{\Delta>0}$.
Hence, if Lusztig positivity and Pl\"ucker positivity agrees for the case of $K = [n]$, then $\widetilde L_\bullet \in (\operatorname{Sp}_{2n}/B)^{>0}$ so that its projection $L_\bullet$ is Lusztig positive also.

Lastly, the case of $K = [n]$ follows from Proposition~\ref{prop:ddagger} and Corollary~\ref{cor:pinning3} once we show that the embeddings $\operatorname{Sp}_{2n} \hookrightarrow GL_{2n}$ and $\operatorname{SO}_{2n+1} \hookrightarrow GL_{2n+1}$ satisfy the property \hyperref[condition2]{($\ddagger$)} in addition to the property \hyperref[condition1]{$(\dagger$)}.
The property \hyperref[condition1]{$(\dagger$)} was already verified in the proof of \ref{mainthm:pos}$\implies$\ref{mainthm:nonneg}.  For the property \hyperref[condition2]{($\ddagger$)}, \Cref{prop:foldingnondecreasing} and \Cref{prop:foldingdistinguished} implies \hyperref[condition2]{($\ddagger1$)}, and the following observations, whose verification is straightforward from the explicit descriptions of the pinnings, imply \hyperref[condition2]{($\ddagger2$)}:
\begin{itemize}
\item For any $i\in [n-1]$, we have that any element of $\{x_i^A(m), \dot s_i^A\}$ commutes with any element of $\{x_{2n-i}^A(m), \dot s_{2n-i}^A\}$ in $GL_{2n}$, and similarly for the pairs $\{x_i^A, \dot s_i^A\}$ and $\{x_{2n+1-i}^A, s_{2n+1-i}^A\}$ in $GL_{2n+1}$. 
\item We have
$
x_n^B(m)(\dot s_n^B)^{-1} = x_n^A(\sqrt{2}m) (\dot s_n^A)^{-1}x_{n+1}^A(-m^2) (\dot s_{n+1}^A)^{-1}x_n^A(\sqrt{2}m) (\dot s_n^A)^{-1}.
$
under the embedding $\operatorname{SO}_{2n+1} \hookrightarrow GL_{2n+1}$.
\qedhere
\end{itemize}
\end{proof}

\begin{rem}
Condition \hyperref[condition2]{($\ddagger2$)} can be weakened to a form that is more general than what we need for our purposes. Let $\iota$ be an embedding of $G$, with root system $\Phi$, into $\mathfrak{S}_N$. Let $\psi$ be the corresponding map of sequences of $[N-1]$. For an interval $I=\{a,a+1,\ldots, a+b\}\subset [N]$, denote by $w_0^I$ the permutation in $\mathfrak{S}_N$ which fixes $[N]\setminus I$ and maps $a+i$ to $a+b-i$ for $0\leq i\leq b$. Suppose that, in addition to \hyperref[condition1]{($\dagger$)} and \hyperref[condition2]{($\ddagger1$)}, $(\iota,\psi)$ satisfies ($\ddagger2'$): For each $s_i^\Phi$, $\psi(s_i^\Phi)=\prod w_0^I$, where the product is over disjoint intervals of $[N]$.  Then, $\mathcal{R}_{\bu,\bv}^\Phi\subseteq \mathcal{R}_{\tilde{\psi}(\bu),\psi(\bv)}^A$. In particular, this implies that \Cref{prop:ddagger} still holds. We include a proof sketch.
\end{rem}
\begin{proof}[Proof sketch.]
We observe that $y_i^\Phi(t)=y_{\psi(i)}^A(\mathbf{t})$, where $\mathbf{t}=(f_1(t),f_2(t),\ldots, f_{|\psi(i)|}(t))\in \mathbb{R}^{\ell(\psi(i))}$, where the $f_i$ are as in \hyperref[condition1]{($\dagger1$)}. Also, $\dot{s}_i^\Phi=\dot{s}_\mathbf{i}^A$, and $x_i^\Phi(t) (\dot{s}_i^\Phi)^{-1}=D(x_i^A(t_j) (\dot{s}_i^A)^{-1})_\mathbf{i}$ for some diagonal matrix $D$ with $\pm1$ on the diagonal and some choice of $\{t_j\}$. The first two observations are immediate. We provide more detail on the third. 

Since permutation of disjoint subsets commute, it suffices to focus on the case that $\psi(s_i^\Phi)=\mathbf{s}_{\mathbf{i}}^A$ is an expression for $w_0^A$ in $\mathfrak{S}_n$ for some $n\leq N$. Let $G\coloneqq x_i^\Phi(t) (\dot{s}_i^\Phi)^{-1}=x^A_\mathbf{i}(\mathbf{t})(\dot{s}_\mathbf{i}^A)^{-1}$, with $\mathbf{t}$ as in the previous paragraph. The first factor is upper triangular. The second is anti-diagonal with $\pm1$ on the anti-diagonal. Thus, $G$ is a matrix with $\pm1$ on the anti-diagonal and any non-zero entry on or above the anti-diagonal. On the other hand, using the explicit form $x_i^A(t)(\dot{s}_i^A)^{-1}=\phi_i\left(\begin{bmatrix}
    -t & 1 \\
    -1 & 0
\end{bmatrix}\right)$, we compute $H\coloneqq (x_i^A(t_{j})(\dot{s}_i^A)^{-1})_\mathbf{i}$. To do this, we observe the following braid move: \[x_i^A(t_{1})(\dot{s}_i^A)^{-1}x_{i+1}^A(t_{2})(\dot{s}_{i+1}^A)^{-1}x_i^A(t_{3})(\dot{s}_i^A)^{-1}=x_{i+1}^A(t_{3})(\dot{s}_{i+1}^A)^{-1}x_i^A(t_{1}t_3-t_2)(\dot{s}_i^A)^{-1}x_{i+1}^A(t_{1})(\dot{s}_{i+1}^A)^{-1}.\] This allows us to assume without loss of generality that $\mathbf{i}=(1,2,\ldots, n-1,1,2,\dots, n-2,\ldots, 1,2,1)$. Then, by an explicit computation by induction, we see that $H$ has entries $\pm 1$ on the anti-diagonal and, by making appropriate choices of $\{t_j\}$, can have any entries we desire above the anti-diagonal. Thus, for some choice of $\{t_j\}$, $G=HD$, where $D$ is a diagonal $\pm 1$ matrix chosen such that the anti-diagonals of $G$ and $DH$ agree.

We recall the notation $g_k$ of \Cref{defn:Deodhar}. We will use superscripts to denote a root system for $g_k$, as we do for $x_i$, $y_i$ and $\dot{s}_i$. Consider a flag $F\in\mathcal{R}_{\bu,\bv}^\Phi$. Then $F$ is represented by some $p_{\bu,\bv}(\mathbf{m},\mathbf{t})=g_1^\Phi\cdots g_{\ell(\bv)}^\Phi$. We have shown that this matrix equals $g_1^Ad_1g_2^Ad_2\cdots g_{\ell(\psi(\bv))d_{\ell(\psi(\bv))}}^A$, where each $d_i^A$ is diagonal with $\pm 1$ on the diagonal (many of these are identity matrices, but we may get non-identity matrices for each $g_k^\Phi=x_i^\Phi(\dot{s}_i)^{-1}$). It is straightforward to see that this can be rewritten as $g_1^Ag_2^A\cdots g_{\ell(\psi(\bv))}^AD'$, for some $D'$ diagonal with $\pm 1$ on the diagonal. However, multiplying by a diagonal matrix on the right does not change $g_1^Ag_2^A\cdots g_{\ell(\psi(\bv))}^A$ as a flag. By definition, the latter lies in $\mathcal{R}_{\tilde{\psi}(\bu),\psi(\bv)}^A$. Thus, $\mathcal{R}_{\bu,\bv}^\Phi\subseteq\mathcal{R}_{\tilde{\psi}(\bu),\psi(\bv)}^A$.
\end{proof}

\subsection{Proof 2: Relating distinguished subexpressions in \texorpdfstring{$W^\Phi$ and $W^A$}{W\^{Phi} and W\^A}}

This subsection contains a proof of \ref{mainthm:consec}$\implies$\ref{mainthm:pos}, which is based primarily on the combinatorics of expressions in $W^\Phi$. For the rest of this section, fix some $K=\{k,\cdots, n\}$ and the corresponding $J=\{1,2,\cdots, k-1\}$. If $\Phi=C$, define $\hat{J}\coloneqq \{i\mid 2n-i\in J\}$. If $\Phi=B$, define $\hat{J}\coloneqq \{i\mid 2n+1-i\in J\}$. In either case, let $J^A=J\cup \hat{J}$ and define $K^A$ similarly. Finally, we will use $J'=[2n-1]\setminus K$ if $\Phi=C$ and $J'=[2n]\setminus K$ if $\Phi=B$.

\textbf{Notation:} In this section, we will have expressions for many different Weyl group elements appearing. In order to help us keep track of them, we will use the following notation: for $W$ a Weyl group, $\mathbf{w}=\mathbf{s}_\mathbf{i}$ an expression for $w\in W$, and $\mathbf{a}\in \mathbb{R}^{\ell(\mathbf{w})}$, we will write $y_\mathbf{w}(\mathbf{a})\coloneqq y_\mathbf{i}(\mathbf{a})$.

The combinatorics of partial flag varieties is related to the combinatorics of parabolic subgroups of a Weyl group, defined as follows: 
\begin{defn}
Let $W$ be a Weyl group and $J$ a subset of the roots of the corresponding root system. We let $W_J=\langle s_i\mid i\in J\rangle$ be the corresponding parabolic subgroup. We denote by $W^J$ the quotient $W/W_J$.    
\end{defn}

We recall the definition of a descent in a Weyl group $W$.

\begin{defn}
    We say $w$ has a \textit{descent} at position $i$ if the pair $(i,i+1)$ is an inversion. We denote by $\textnormal{des}(w)$ the set of positions of its descents. 
\end{defn}

\begin{prop}
For a subset $J\subseteq [n]$, let $w^J$ be the minimal length coset representative for $w\in W$ in $W^J$. Then, $\textnormal{des}(w^J)\subset [n]\setminus J$.
\end{prop}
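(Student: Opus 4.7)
The statement is a standard fact about minimal length coset representatives for parabolic quotients of Coxeter groups, so the plan is to give the classical argument by contradiction, adapted to the inversion-based definition of descent used in the paper.

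First, I would recall the key bridge between the combinatorial notion of descent used in the paper and the Coxeter-theoretic length function: for any $w$ in $W = W^A$, $W^B$, or $W^C$, one has $i \in \textnormal{des}(w)$ if and only if $\ell(ws_i) < \ell(w)$. This follows directly from the characterization of length as the number of inversions recorded in the paper just before the definition of descent. Indeed, right-multiplying $w$ by the simple reflection $s_i$ (which corresponds to swapping the values $w(i)$ and $w(i+1)$ in one-line notation, together with the analogous swap among barred entries in types $B$ and $C$) either creates or destroys exactly the inversion $(i,i+1)$, depending on whether $w(i) < w(i+1)$ or $w(i) > w(i+1)$.

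Next, I would invoke the standard description of the minimal-length coset representative: $w^J$ is the unique element of the coset $wW_J$ satisfying $\ell(w^J s_i) > \ell(w^J)$ for every $i \in J$. This characterization is immediate from the exchange condition (or from the general theory of Coxeter groups): if $\ell(w^J s_i) \leq \ell(w^J)$ for some $i \in J$, then $w^J s_i$ is an element of the same coset $wW_J$ of strictly smaller length, contradicting minimality.

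Combining these two observations gives the claim: suppose, for contradiction, that $i \in \textnormal{des}(w^J) \cap J$. By the first paragraph, $\ell(w^J s_i) < \ell(w^J)$. By the second, no such $i \in J$ can exist, so $i \notin J$, i.e., $i$ lies in the complement of $J$.

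The argument is routine once one knows the standard fact that the minimal coset representative has no descents inside $J$; the only subtlety worth highlighting is the identification between the paper's inversion-based definition of descent and the length-decreasing characterization of descent in an arbitrary Coxeter group. This identification is clear in type $A$ and extends uniformly to types $B$ and $C$ via the signed-permutation description of $W^B$ and $W^C$ recorded in Remark~\ref{rem:weyl}, so there is no real obstacle.
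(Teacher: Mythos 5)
Your proof is correct. The paper states this proposition without proof, treating it as a standard fact about Coxeter groups; the argument you give --- combining the equivalence between the inversion-based descent condition and the length-decreasing characterization $\ell(w s_i) < \ell(w)$ with the minimality of $w^J$ in its coset --- is exactly the standard argument the paper implicitly relies on, so there is nothing in the paper to contrast it with.

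One small caveat on the step where you say right-multiplication by $s_i$ ``creates or destroys exactly the inversion $(i,i+1)$'': in types $B$ and $C$, other pairs $(i,b)$ and $(i+1,b)$ can also flip inversion status, but they flip in opposite directions and cancel, so the \emph{net} change to the inversion count is $\pm 1$ and the sign of the change is indeed governed by the pair $(i,i+1)$. (One also needs to note that in type $C$ the companion $\mathfrak{S}_{2n}$-inversion $(\overline{i+1},\overline{i})$ is not counted in the $W^C$-inversion set, and similarly in type $B$.) This does not affect the correctness of your conclusion --- descent at $i$ in the inversion sense is equivalent to $\ell(w s_i) < \ell(w)$ --- but it is slightly less immediate than ``directly.''
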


We present the partial flag version of \Cref{prop:Deodhar}.

\begin{defn} Define the \emph{Deodhar cell} $\mathcal R_{\bu,\bv}^J$ of $\bu\preceq \bv$ to be the image of $G_{\bu,\bv}$ in $G/{P_J}$ under the projection map $G \to {G/P_J}$.
\end{defn}

\begin{prop}\label{prop:Deodharpartial} We have the following.
\begin{enumerate}[label = (\alph*)]
\item \cite[Section 3.4]{kodamawilliams}
Let $\bv$ be an expression for a minimal length coset representative in $W^J$. In the setting of Definition~\ref{defn:Deodhar}, the composition 
$\mathbb{R}^{J^-_\bu}\times (\mathbb{R^*})^{J^\circ_\bu} \to G_{\bu,\bv} \to \mathcal R^J_{\bu,\bv}$ is a bijection.  We abuse notation and write $p_{\bu,\bv}$ also for this composition.
\item \cite[Section 3.4]{kodamawilliams}
For each $v$ a minimal length coset representative in $W^J$, fix a reduced expression $\bv$.  Then, we have
\[
G/B = \bigsqcup_{v\in W^J} \bigsqcup_{\bu\preceq \bv} \mathcal R_{\bu,\bv},
\]
called the \emph{Deodhar decomposition} of the flag variety $G/B$.
\item Fix a reduced expression $\mathbf{w_0^J}$ for the minimal length coset representative of the longest element $w_0^J\in W^J$, and let $\mathbf 1$ denote the subexpression $(1, \dotsc, 1)$, which is distinguished.  Then, $(G/P_J)^{>0} = p_{\mathbf 1, \mathbf{w_0}^J}(\RR_{>0}^{\ell(w_0^J)}) \subset \mathcal R^J_{\mathbf 1, \mathbf{w_0^J}}$.
\end{enumerate}
\end{prop}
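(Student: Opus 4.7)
The plan is to prove part (c), as parts (a) and (b) are attributed to \cite{kodamawilliams}. The strategy is to reduce the computation of $(G/P_J)^{>0}$ to the known parametrization of $U_-^{>0}$ from Lemma~\ref{lem:positiveparam}, and then absorb the ``extra'' simple reflections corresponding to $W_J$ into the parabolic $P_J$.

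First I would unwind the definitions. By Definition~\ref{defn:LusztigPos}, $(G/P_J)^{>0} = \pi(G^{>0}) = \pi(U_-^{>0} T^{>0} U_+^{>0})$, where $\pi: G \to G/P_J$ is the projection. Since $T, U_+ \subseteq B_+ \subseteq P_J$, the right-hand side reduces to $\pi(U_-^{>0})$. Next, I would invoke the standard parabolic factorization of the longest element: one can choose a reduced expression $\mathbf{w_0} = \mathbf{w_0^J} \cdot \mathbf{w_{0,J}}$ of $w_0 \in W$, where $\mathbf{w_0^J}$ is a reduced expression for the minimal-length coset representative of $w_0 W_J$ and $\mathbf{w_{0,J}}$ is a reduced expression for the longest element of $W_J$, with $\ell(w_0) = \ell(w_0^J) + \ell(w_{0,J})$. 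By Lemma~\ref{lem:positiveparam}, $U_-^{>0} = \mathbf{y}_{\mathbf{w_0}}(\RR_{>0}^{\ell(w_0)}) = \mathbf{y}_{\mathbf{w_0^J}}(\RR_{>0}^{\ell(w_0^J)}) \cdot \mathbf{y}_{\mathbf{w_{0,J}}}(\RR_{>0}^{\ell(w_{0,J})})$.

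The crux of the argument is that $\mathbf{y}_{\mathbf{w_{0,J}}}(\mathbf{b})$ lies entirely in $P_J$. Being a product of elements of the form $y_i(b)$ with $i \in J$, it suffices to show $y_i(\RR) \subseteq P_J$ for each $i \in J$. This follows from the $SL_2$ identity
\[
y_i(m) = \dot{s}_i \, x_i(-m) \, \dot{s}_i^{-1},
\]
together with $\dot{s}_i \in P_J$ (since $P_J$ contains representatives of $W_J$) and $x_i(-m) \in B_+ \subseteq P_J$. Consequently, $\pi(U_-^{>0}) = \pi\bigl(\mathbf{y}_{\mathbf{w_0^J}}(\RR_{>0}^{\ell(w_0^J)})\bigr)$. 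Since the subexpression $\mathbf{1}$ of $\mathbf{w_0^J}$ has $J^\circ_{\mathbf{1}}$ equal to the full index set and $J^\pm_{\mathbf{1}} = \emptyset$, Definition~\ref{defn:Deodhar} gives $p_{\mathbf{1}, \mathbf{w_0^J}}(\mathbf t) = \pi(\mathbf{y}_{\mathbf{w_0^J}}(\mathbf t))$, yielding the desired equality. The containment $p_{\mathbf{1}, \mathbf{w_0^J}}(\RR_{>0}^{\ell(w_0^J)}) \subseteq \mathcal{R}^J_{\mathbf{1}, \mathbf{w_0^J}}$ is then immediate from the definition of the Deodhar cell.

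There is no substantial obstacle. The only point requiring care is the verification that $y_i(\RR) \subseteq P_J$ for $i \in J$, handled by the $SL_2$ identity above, and the use of the length-additive factorization $w_0 = w_0^J \cdot w_{0,J}$, which is a standard fact about parabolic quotients of Coxeter groups (see e.g. \cite{BjornerBrenti}).
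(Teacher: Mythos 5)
Your proof of part (c) is correct and takes the natural route that the paper leaves implicit: reduce from $G^{>0}$ to $U_-^{>0}$ using $T^{>0}U_+^{>0}\subseteq B_+\subseteq P_J$, use the length-additive parabolic factorization $w_0 = w_0^J\cdot w_{0,J}$ to split the product $\mathbf y_{\mathbf{w_0}}$, and absorb the $\mathbf y_{\mathbf{w_{0,J}}}$ tail into $P_J$ via the $SL_2$ identity $y_i(m) = \dot s_i x_i(-m)\dot s_i^{-1}$. One small correction on the citation: the identification $U_-^{>0} = \mathbf y_{\mathbf{w_0}}(\RR_{>0}^{\ell(w_0)})$ together with its independence from the choice of reduced word for $w_0$ is Definition~\ref{defn:LusztigPos} and the remark immediately following it (citing \cite{lusztig1994positivity}), not Lemma~\ref{lem:positiveparam}, which concerns the map $\mathring{\mathbf y}_{\mathbf i}$ into $G/B_+$ rather than the subset of $G$ itself. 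With that citation adjusted, the argument is complete.
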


We now transition to discussing the combinatorics of expressions and distinguished subexpressions in $W^\Phi$ and $W^A$, which will lie at the heart of our proof. 

\begin{prop}\cite[Lemma 3.5]{marshrietsch2004}
If $v<w\in W$ and $\bw$ is an expression for $w$, then there is a unique reduced distinguished subexpression $\bv$ for $v$ in $\bw$ and a unique reduced reverse distinguished subexpression $\bv$ for $v$ in $\bw$.    
\end{prop}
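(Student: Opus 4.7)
The plan is to prove both existence and uniqueness together by a greedy right-to-left algorithm, with induction on $\ell(\bw) = p$. The base case $p=0$ is trivial since then $v \leq w = e$ forces $v = e$, and the empty subexpression is the unique candidate.

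For the inductive step, write $\bw = \bw' s_{i_p}$. I would first observe that the definition of distinguished completely forces the choice of the last letter $u_p$ of any reduced distinguished subexpression $\bu$ for $v$: unpacking Definition~\ref{defn:Jsets}, the condition $u_{(p)} \leq u_{(p-1)}s_{i_p}$ at position $p$ combined with $u_{(p)}=v$ means that if $vs_{i_p} < v$, then $u_p$ \emph{must} be $s_{i_p}$ (so $u_{(p-1)} = vs_{i_p}$), while if $vs_{i_p} > v$, then $u_p$ \emph{must} be $1$ (so $u_{(p-1)} = v$). In either case, the new target element for the shorter expression $\bw'$ is determined, and reducedness of $\bu$ matches the hypothesis that the corresponding Bruhat relation strictly decreases length when we include $s_{i_p}$.

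The main step then is to verify that the reduced target element still lies below the element $w'$ represented by $\bw'$, so that we can apply the inductive hypothesis to $\bw'$ to get a unique reduced distinguished subexpression. This is exactly where I would invoke the Lifting Property of Bruhat order (the standard $Z$-property for Coxeter groups): starting from $v \leq w$ and comparing the behavior of the rightmost reflection $s_{i_p}$, one of the following holds for the (not-necessarily-reduced) expression $\bw'$ for $w'=ws_{i_p}$ or $w' = w$ depending on whether $\bw$ itself is reduced at the last step — namely, $vs_{i_p} \leq w'$ in the case $vs_{i_p} < v$, and $v \leq w'$ in the case $vs_{i_p} > v$. Here one has to be a little careful because $\bw$ is not assumed reduced; the cleanest way is to first handle the case that $\bw$ is reduced (using the lifting property directly) and then handle a ``non-reduced step'' $u_{(j-1)} s_{i_j} = u_{(j-1)}$ by observing that such a step contributes nothing and can be absorbed into the induction.

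The main obstacle I anticipate is precisely handling the non-reduced case — keeping track of what happens when multiplying by $s_{i_j}$ does not change the length of the partial product of $\bw$ itself, while the subexpression target may still either need to absorb $s_{i_j}$ (so $J_{\bu}^\circ$-entries appear) or not. Once the greedy/forced-choice framework is set up and the lifting property gives the Bruhat comparison after peeling the last letter, existence follows by running the algorithm, and uniqueness follows because at each step the choice was forced. Finally, the reverse distinguished statement is obtained by the same argument processed from the left, using the symmetric condition $u_{(j-1)} \leq s_{i_j} u_{(j)}$ and the left-handed lifting property.
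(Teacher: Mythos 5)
The paper cites \cite[Lemma 3.5]{marshrietsch2004} without giving its own proof; your proposal reconstructs the standard inductive argument, and the skeleton --- peel the last letter of $\bw$, observe that the distinguished and reduced conditions together force the last letter of $\bu$, verify the new Bruhat inequality via the lifting property, and induct --- is correct. One small clarification on the forcing: when $vs_{i_p}<v$, the distinguished condition alone forces $u_p=s_{i_p}$, but when $vs_{i_p}>v$ the distinguished condition permits either choice and it is \emph{reducedness} that forces $u_p=1$ (choosing $u_p=s_{i_p}$ would make $\bu_{(p-1)}$ a length-$(\ell(v)-1)$ expression for the element $vs_{i_p}$, which has length $\ell(v)+1$). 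You allude to this in the next sentence but misattribute the forcing to the distinguished condition.

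The genuine gap is in the verification that the new target lies below $w'=ws_{i_p}$. The worry you flag, and your proposed split into a reduced case and a ``non-reduced step $u_{(j-1)}s_{i_j}=u_{(j-1)}$,'' are not coherent: a simple reflection never fixes a group element (it always changes length by exactly $\pm1$), so that condition is empty. Fortunately no split on the reducedness of $\bw$ is needed at all. Write $s=s_{i_p}$; if $ws<w$, the lifting property applied to $v<w$ gives $vs\leq ws$ when $vs<v$ and $v\leq ws$ when $vs>v$; if $ws>w$, then the chain $vs\leq v\leq w<ws$ (resp.\ $v\leq w<ws$) already does the job. Running through these four cases explicitly replaces the hand-wave and closes the argument. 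A further wrinkle: the lifting property only gives $v'\leq w'$, not strict inequality, and in the case $vs>v$, $ws<w$ equality $v=ws$ does occur (e.g.\ $v=s_1$, $w=s_1s_2$, $s=s_2$ in $\mathfrak{S}_3$), so the induction must be run with the hypothesis $v\leq w$ rather than $v<w$.
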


Accordingly, we will refer to \textit{the} reduced distinguished subexpression and \textit{the} reduced reverse distinguished subexpression going forward. 

\begin{rem}
    The reduced distinguished and reduced reversed distinguished subexpression of $\bw$ for $v$ can be informally thought of as the rightmost and left most reduced subexpression of $\bw$ for $v$, respectively.
\end{rem}

We leave it to the reader to verify that the following is an expression for $w_0^\Phi\in W^C\cong W^B \cong \mathfrak{S}^\pm_n$: 

\begin{equation*}
\mathbf{w_0}^{\Phi}=s_n^\Phi  (s_{n-1}^{\Phi})   s_n^\Phi(s_{n-2}^\Phi s_{n-1}^{\Phi})s_n(\cdots)s_n^\Phi(s_{1}^{\Phi}s_{2}^{\Phi}\cdots s_{n-1}^\Phi)s_n^\Phi (s_1^\Phi)(s_2^\Phi s_1^\Phi)(\cdots)(s_{n-1}^\Phi\cdots s_1^\Phi),
\end{equation*}

\begin{eg}
    Let $n=4$. Then $\bw_0^\Phi=s_4^\Phi (s_3^\Phi) s_4^\Phi( s_2^\Phi s_3^\Phi)s_4^\Phi (s_1^\Phi s_2^\Phi s_3^\Phi)s_4^\Phi (s_1^\Phi)(s_2^\Phi s_1^\Phi)( s_3^\Phi s_2^\Phi s_1^\Phi)$.
\end{eg}

Denote by $(w_0^J)^{\Phi}$ the minimal coset representative of $w_0^\Phi$ in $(W^C)^J \cong (W^B)^J\cong \mathcal{B}^J$. Denote the reduced reverse distinguished subexpression for $(w_0^J)^{\Phi}$ in $\mathbf{w_0}^{\Phi}$ by $(\mathbf{w_0^J})^\Phi$. By a slight abuse of notation, for a Weyl group element $w$ with expression $\bw$, let $\psi(w)$ denote the Weyl group element given by the expression $\psi(\bw)$.

\begin{rem}\label{rem:descentscomparison}
    Observe that in general, $(w_0^J)^A$, $(w_0^{J^A})^A= \psi((w_0^J)^\Phi)$ and $\left(w_0^{J'}\right)^A$ are all pairwise different. If $w\in W^\Phi$ has $\textnormal{des}(w)=J \subset [n]$, then $\textnormal{des}(\psi(w))=J^A$, and $J\subset J^A\subset J'$. We will write $\psi((w_0^J)^\Phi)$ rather than $(w_0^{J^A})^A$ going forward. The relation between $\psi((w_0^J)^\Phi)$ and $\left(w_0^{J'}\right)^A$ is expanded on in \Cref{cor:Bruhatcomparison}.
\end{rem}

\begin{prop}\label{prop:samefirstentries}
    For $i\in [n]$, we have $(w_0^J)^\Phi(i)=\left(w_0^{J'}\right)^A(i)\in[\overline{n}]$.
\end{prop}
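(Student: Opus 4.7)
The plan is to compute both $(w_0^J)^\Phi$ and $(w_0^{J'})^A$ directly on positions in $[n]$, using the standard description of minimal coset representatives for parabolic quotients of a Weyl group: the minimum-length representative of the coset $wW_J$ is the unique element of $wW_J$ with no right descent in $J$, which for a standard parabolic of $\mathfrak{S}_N$ amounts to requiring the values to be increasing on each block of positions cut out by $J$.

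First I would observe that the longest element $w_0^\Phi$, viewed in $W^A = \mathfrak{S}_N$ (with $N = 2n$ in type C and $N = 2n+1$ in type B) under the embedding of \Cref{rem:weyl}, coincides with $w_0^A$: both implement $i \mapsto N+1-i$, since $w_0^\Phi$ acts on the signed set by $i \mapsto \overline{i}$, and $\overline{i}$ sits at position $N+1-i$ in the linear order of \Cref{rem:weyl}. Next I would identify the image of $W_J^\Phi$ for $J = \{1, \dotsc, k-1\}$ in $\mathfrak{S}_N$: using \Cref{lem:pinning1} and \Cref{rem:weyl}, it is generated by the products $(i,\, i+1)(\overline{i+1},\, \overline{i})$ for $i < k$, so as a subgroup of $\mathfrak{S}_N$ it permutes positions $[1,k]$ freely while enforcing the mirror permutation on the barred positions $[N-k+1, N]$; all other positions, including position $n+1$ in type B, are fixed. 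Granted this, positions $[k+1, n]$ are untouched by $W_J^\Phi$, so $(w_0^J)^\Phi(i) = w_0^\Phi(i) = N+1-i$ for $i \in [k+1,n]$; and the values on $[1,k]$ are just $\{w_0^\Phi(j) : j \in [1,k]\} = [N-k+1, N]$ arranged increasingly, giving $(w_0^J)^\Phi(i) = N-k+i$ for $i \in [1,k]$.

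Then I would run the same computation for $(w_0^{J'})^A$. The parabolic $W_{J'}^A$ partitions $[N]$ into the blocks $[1,k]$, $\{k+1\}, \dotsc, \{n\}$, $[n+1,N]$, and its minimal coset representative sorts the values of $w_0^A$ increasingly on each block. On $[n]$ this yields exactly $N-k+i$ on $[1,k]$ and $N+1-i$ on $[k+1,n]$, matching $(w_0^J)^\Phi$. All of these values lie in $[n+1, N]$ in type C and in $[n+2, N]$ in type B, which under the identification of \Cref{rem:weyl} is precisely the barred half $[\overline{n}]$ of the linear order, yielding the second assertion.

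I expect the only real obstacle to be careful bookkeeping: one must distinguish the parabolic $W_J^\Phi$, whose image in $\mathfrak{S}_N$ acts on $[1,k]$ and its mirror $[N-k+1,N]$ \emph{simultaneously}, from the standard parabolic $W_{J'}^A$, whose two ends act independently. A priori these produce different cosets of $w_0^A$ in $\mathfrak{S}_N$ (as already noted in \Cref{rem:descentscomparison}), yet the ``increasing on each block'' normalization on the left-hand block $[1,k]$ together with the fact that the middle positions $[k+1,n]$ are fixed by both parabolics forces the first $n$ coordinates of their minimal representatives to agree.
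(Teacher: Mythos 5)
Your proposal is correct, and it reaches the conclusion by essentially the same structural observations as the paper, but phrased more explicitly. The paper's proof first shows that $(w_0^J)^\Phi([n]) = (w_0^{J'})^A([n]) = [\overline{n}]$ as sets (by factoring $w_0 = w_0^J \cdot (w_0)_J$ and noting the parabolic factor preserves $[n]$), and then invokes a uniqueness claim: both restrictions, viewed as bijections $[n]\to[\overline{n}]$, are ``Bruhat-maximal subject to descents only at $K$,'' hence must agree. Your argument replaces that uniqueness claim with a direct computation: identify $w_0^\Phi$ with $w_0^A \in \mathfrak{S}_N$, use the ``sort values within each $J$-block'' description of minimal coset representatives, observe that the middle positions $[k+1,n]$ are fixed in both cases, and that on $[1,k]$ both sorts produce the same increasing rearrangement of $[N-k+1,N]$. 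This yields the explicit formula $(w_0^J)^\Phi(i) = (w_0^{J'})^A(i) = N-k+i$ for $i\le k$ and $=N+1-i$ for $k<i\le n$, and the containment in $[\overline{n}]$ is immediate. The two arguments rest on the same key facts (minimal coset reps sort within blocks; $n\notin J, J'$ so position $n$ is on a block boundary in both settings); yours trades the paper's slightly compressed ``Bruhat-maximal'' invocation for a fully explicit check, which is a reasonable and arguably more transparent choice.
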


\begin{proof}
    We can write $w_0^{\Phi}=(w_0^J)^{\Phi}(({w_0})_J)^{\Phi}$, where $(({w_0})_J)^{\Phi}$ consists of only $s_i$ with $i\in J$. Since $n\notin J$ and $w_0^{\Phi}([n])=[\overline{n}]$, we must have that $({w_0}^J)^{\Phi}([n])=[\overline{n}]$. Similarly, we can write $w_0^{A}=\left(w_0^{J'}\right)^{A}(({w_0})_{J'})^{A}$, where $(({w_0})_{J'})^{A}$ consists of only $s_i$ with $i\in J'$. Since $n\notin J'$ and $w_0^{A}([n])=[\overline{n}]$, we must have that $({w_0}^{J'})^{A}([n])=[\overline{n}]$. Thus, $({w_0}^J)^{\Phi}([n])=({w_0}^{J'})^{A}([n])=[\overline{n}]$. 
    
    Thought of as a permutation of $[\overline{n}]$, with the ordering induced from our ordering of $[2n]$ or $[2n+1]$ (depending on $\Phi)$, these must both be Bruhat-maximal subject to the condition that descents can only occur at locations $K=[n]\setminus J=[n]\setminus J'$ and so they must coincide.

\end{proof}

\begin{prop}\label{prop:generalbruhatcomparison}
    Let $W$ be a Weyl group and let $R\subset R'$ be sets of roots in the corresponding root system $R_0$. Then, in $W^R$, $w_0^R=w_0^{R'}u$ where $u\in W_{R'}$ 
\end{prop}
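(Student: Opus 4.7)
The plan is to use the standard parabolic factorization of elements of $W$: for any subset $S$ of the simple roots, every $w \in W$ admits a unique factorization $w = w^S \cdot w_S$ with $w^S$ the minimal length coset representative of $wW_S$ in $W^S$ and $w_S \in W_S$, satisfying $\ell(w) = \ell(w^S) + \ell(w_S)$. This is standard and can be cited from \cite{BjornerBrenti} or \cite{Humphreys}.

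Applying this factorization to $w_0$ with respect to both $W_R$ and $W_{R'}$, I would write
\[
w_0 \;=\; w_0^R \cdot v \;=\; w_0^{R'} \cdot v',
\]
for some $v \in W_R$ and $v' \in W_{R'}$ (in fact these are the longest elements of $W_R$ and $W_{R'}$, respectively, though this is inessential for the argument). Solving for $w_0^R$ gives $w_0^R = w_0^{R'} \cdot (v' v^{-1})$, which is an identity in $W$.

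To finish, I would invoke the hypothesis $R \subseteq R'$, which forces $W_R \subseteq W_{R'}$. Consequently $v \in W_R \subseteq W_{R'}$, so $u := v' v^{-1}$ lies in $W_{R'}$, producing the desired identity $w_0^R = w_0^{R'} u$ with $u \in W_{R'}$ (which then descends to the claimed equality in $W^R$ by passing to cosets).

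The argument is essentially a one-line manipulation of parabolic factorizations once that tool is in hand, so there is no substantial obstacle. The only point worth emphasizing is that the key input, namely that $R \subseteq R'$ implies $W_R \subseteq W_{R'}$, is exactly what allows $v^{-1}$ to be absorbed into $W_{R'}$; without it, one could not conclude $u \in W_{R'}$.
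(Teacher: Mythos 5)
Your proof is correct and follows essentially the same approach as the paper: the paper simply cites \cite[Corollary 2.4.6]{BjornerBrenti} (which records the parabolic factorization $w_0 = w_0^S\, w_{0,S}$) where you spell out the argument directly from the parabolic factorization and the inclusion $W_R \subseteq W_{R'}$.
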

\begin{proof}

This follows from \cite[Corollary 2.4.6]{BjornerBrenti}.
\end{proof}
\begin{cor}\label{cor:Bruhatcomparison}
    We have $ \psi(\left(w_0^J\right)^\Phi)=\left(w_0^{J'}\right)^Au$ for some $u\in W^A_{J'}$. 
\end{cor}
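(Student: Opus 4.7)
The plan is to establish the result in two steps: first, to show the identification $\psi((w_0^J)^\Phi) = (w_0^{J^A})^A$, the minimal length coset representative in $W^A/W^A_{J^A}$; second, to invoke \Cref{prop:generalbruhatcomparison} using the containment $J^A \subseteq J'$ noted in \Cref{rem:descentscomparison}.

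For the first step, I would start from the parabolic factorization $w_0^\Phi = (w_0^J)^\Phi \cdot (w_0)_J^\Phi$ with lengths adding, where $(w_0)_J^\Phi$ is the longest element of $W^\Phi_J$. Applying $\psi$ and invoking \Cref{prop:psipreservesreduced} and \Cref{cor:preserveslongestword} yields
\[
w_0^A \;=\; \psi(w_0^\Phi) \;=\; \psi((w_0^J)^\Phi) \cdot \psi((w_0)_J^\Phi),
\]
with lengths still adding in $W^A$. The crux is then to identify $\psi((w_0)_J^\Phi)$ with $(w_0)_{J^A}^A$, the longest element of $W^A_{J^A}$. Since $J \subseteq \{1,\ldots,k-1\}$ and $k \leq n$, every $i \in J$ is strictly less than $n$, so $\psi(s_i^\Phi)$ is a product of two commuting simple reflections in $W^A$ both indexed in $J^A$. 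A length count then shows that $\psi((w_0)_J^\Phi)$ has length $2\binom{k}{2} = k(k-1)$, and this matches the length of the longest element of $W^A_{J^A} \cong S_k \times S_k$, since $J$ and $\hat J$ are disjoint consecutive blocks in $[N-1]$ under the assumption $k \leq n$. Because $\psi((w_0)_J^\Phi) \in W^A_{J^A}$ attains the maximum length of this parabolic, it must equal $(w_0)_{J^A}^A$. Uniqueness of minimal length coset representatives in $W^A/W^A_{J^A}$ then forces $\psi((w_0^J)^\Phi) = (w_0^{J^A})^A$.

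For the second step, since $J^A \subseteq J'$, \Cref{prop:generalbruhatcomparison} applied in $W^A$ immediately yields $(w_0^{J^A})^A = (w_0^{J'})^A \, u$ for some $u \in W^A_{J'}$, which combined with the first step gives the desired conclusion.

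The main obstacle is the identification $\psi((w_0)_J^\Phi) = (w_0)_{J^A}^A$ in Step 1. This depends essentially on the hypothesis $K = \{k,\ldots,n\}$: the index $n$ is excluded from $J$, so $\psi$ behaves uniformly (length-doubling) on every generator of $W^\Phi_J$. If $n \in J$, then $\psi(s_n^\Phi)$ would be a product of one simple reflection (in type C) or three simple reflections (in type B) rather than two, the length count would fail, and $\psi((w_0)_J^\Phi)$ would in general be a strictly shorter element than $(w_0)_{J^A}^A$, breaking the uniqueness argument.
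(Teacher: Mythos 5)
Your proof is correct and follows the same overall strategy as the paper: identify $\psi((w_0^J)^\Phi)$ with the minimal coset representative $(w_0^{J^A})^A$, and then invoke \Cref{prop:generalbruhatcomparison} via the containment $J^A\subseteq J'$. The one place you go beyond the paper is in Step 1: the paper asserts the identity $\psi((w_0^J)^\Phi)=(w_0^{J^A})^A$ as a ``fact'' (it is noted in \Cref{rem:descentscomparison} via the observation that $\psi$ sends descent set $J$ to descent set $J^A$, which by itself does not pin down the element), whereas you derive it rigorously from the parabolic factorization $w_0^\Phi=(w_0^J)^\Phi\cdot(w_0)_J^\Phi$, the length-doubling of $\psi$ on generators $s_i^\Phi$ with $i<n$, and a length count showing $\psi((w_0)_J^\Phi)$ attains the maximal length $k(k-1)$ in $W^A_{J^A}\cong\mathfrak{S}_k\times\mathfrak{S}_k$. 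Your remark that the argument breaks if $n\in J$ is also apt and correctly explains why the hypothesis $K=\{k,\ldots,n\}$ is essential.
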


\begin{proof}
    This follows from the fact that $ \psi(\left(w_0^J\right)^\Phi)=\left(w_0^{J^A}\right)^A$ and $J^A\subset J'$, together with \Cref{prop:generalbruhatcomparison}.
\end{proof}

In particular, \Cref{cor:Bruhatcomparison} implies that $ \psi(\left(w_0^J\right)^\Phi)\geq \left(w_0^{J'}\right)^A$. Thus, we can define $(\mathbf{w_0^{J'}})^{A}$ to be the reduced reverese distinguished subexpression for for $(w_0^{J'})^A$ in $\psi(\left(\mathbf{w_0^J}\right)^\Phi)$.

\begin{prop}\label{prop:prefix}
    We can obtain an expression $(\mathbf{w_0^J})^\Phi\bu$ from $\mathbf{w_0}^\Phi$ by commutation moves and $(\mathbf{w_0^J})^\Phi$ contains the prefix $\mathbf{p}=s_n^\Phi (s_{n-1}^{\Phi}\cdots s_1^\Phi)s_n^{\Phi}(s_{n-1}^{\Phi}\cdots s_{2}^{\Phi})s_n^\Phi \cdots s_n^\Phi (s_{n-1}^\Phi)s_n^\Phi$ of $\mathbf{w_0}^{\Phi}$. The expression $\bu$ consists of $s_i^\Phi$ with $i\in J$.

\end{prop}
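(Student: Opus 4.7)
The plan is to apply commutation relations $s_i^\Phi s_\ell^\Phi = s_\ell^\Phi s_i^\Phi$ (for $|i-\ell|>1$) in two stages. I would first decompose $\mathbf{w_0}^\Phi = \mathbf{a}\cdot \mathbf{S}_n$, where $\mathbf{a}$ comprises the first $n(n+1)/2$ letters of the given reduced expression and $\mathbf{S}_n := s_1^\Phi(s_2^\Phi s_1^\Phi)\cdots(s_{n-1}^\Phi \cdots s_1^\Phi)$ is the standard staircase reduced expression of length $\binom{n}{2}$ for the longest element of the parabolic $\mathfrak{S}_n \subseteq W^\Phi$ (acting by permuting positions $1,\ldots,n$). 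The first key step is to show that $\mathbf{a}$ is commutation-equivalent to $\mathbf{p}$: within each block $(s_{n-j}^\Phi \cdots s_{n-1}^\Phi)s_n^\Phi$ of $\mathbf{a}$, the smallest-index letters can be systematically slid leftward past distant-index letters in earlier blocks. An induction on $n$ reduces each move to a commutation of the form $s_i^\Phi s_\ell^\Phi = s_\ell^\Phi s_i^\Phi$ with $|i-\ell|>1$, and verifying by hand for $n = 3, 4$ confirms the pattern.

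The second step treats the staircase $\mathbf{S}_n$. For $J = \{1, \ldots, k-1\}$, I would establish $\mathbf{S}_n \sim \mathbf{q}^{(k)}\cdot \mathbf{S}_k$, where $\mathbf{S}_k$ is the analogous length-$\binom{k}{2}$ staircase (lying in $W_J$) and
\[
\mathbf{q}^{(k)} := (s_1^\Phi \cdots s_{n-1}^\Phi)(s_1^\Phi \cdots s_{n-2}^\Phi)\cdots(s_1^\Phi \cdots s_{k}^\Phi)
\]
is a reduced expression of length $\binom{n}{2}-\binom{k}{2}$ for the minimal-length coset representative of the longest element of $\mathfrak{S}_n$ in $\mathfrak{S}_n/\mathfrak{S}_k$. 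Writing $\mathbf{S}_n = R_2 R_3 \cdots R_n$ with $R_j := s_{j-1}^\Phi s_{j-2}^\Phi \cdots s_1^\Phi$, the extraction of $\mathbf{S}_k = R_2\cdots R_k$ to the right end proceeds by sliding, for each $\ell \in \{k,\ldots,n-1\}$ in turn, the unique copy of $s_\ell^\Phi$ in $R_{\ell+1}$ leftward past the adjacent low-index letters with which it commutes. A double induction on $n-k$ and on $\ell$ handles all cases.

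Combining both equivalences shows $\mathbf{w_0}^\Phi \sim \mathbf{p}\cdot \mathbf{q}^{(k)}\cdot \mathbf{S}_k$. Since $\mathbf{S}_k$ uses only simple reflections indexed by $J$, and the prefix $\mathbf{p}\cdot\mathbf{q}^{(k)}$ has length $n^2-\binom{k}{2} = \ell((w_0^J)^\Phi)$, it is a reduced expression for $(w_0^J)^\Phi$ beginning with $\mathbf{p}$. Setting $\bu := \mathbf{S}_k$ yields the claimed concatenation $(\mathbf{w_0^J})^\Phi \cdot \bu$. The identification of this prefix with the reduced reverse distinguished subexpression follows from a general commutation-invariance fact: if $\bv \sim \bv'$ via commutations and $\bv' = \bv'_1 \cdot \bv'_2$ with $\bv'_1$ a reduced expression for $v_1$, then the subexpression of $\bv$ corresponding under the commutation to $\bv'_1$ is the reduced reverse distinguished subexpression for $v_1$ in $\bv$, since each commutation move preserves the reverse-distinguished condition at the swapped pair of positions.

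The main obstacle is the bookkeeping in the second commutation, since $s_{k-1}^\Phi$ and $s_k^\Phi$ do not commute and thus sliding $s_k^\Phi$ past the first few blocks of $\mathbf{S}_n$ is blocked by adjacent $s_{k-1}^\Phi$ letters. The induction must be set up carefully to ensure that at each stage enough commutation room remains in the partially-commuted expression to continue the extraction without invoking a braid move; organizing the argument block-by-block from $R_{k+1}$ upward to $R_n$ should accomplish this systematically.
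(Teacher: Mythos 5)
Your approach is essentially the same as the paper's: move the $J$-indexed letters sitting in the type $A$ staircase suffix $\mathbf{S}_n$ of $\mathbf{w_0}^\Phi$ to the right end by commutation moves, leaving a reduced word for $(w_0^J)^\Phi$ followed by a reduced word in $W_J$. You go somewhat further than the paper's proof in two respects: you also commute the long first half (your $\mathbf{a}$) into the explicit form $\mathbf{p}$ --- the paper's proof leaves $\mathbf{a}$ untouched and hence does not literally verify that $\mathbf{p}$ is the prefix --- and you name the intermediate piece $\mathbf{q}^{(k)}$, which gives a clean factorization $\mathbf{w_0}^\Phi\sim\mathbf{p}\,\mathbf{q}^{(k)}\,\mathbf{S}_k$. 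The trade-off is that your two commutation equivalences ($\mathbf{a}\sim\mathbf{p}$ and $\mathbf{S}_n\sim\mathbf{q}^{(k)}\mathbf{S}_k$) are only verified for small $n$ and described schematically in general; the paper instead names exactly which letters to extract from each parenthesis, which is terser but equally informal.

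One step that deserves more scrutiny is your final appeal to a ``general commutation-invariance fact'' to identify the prefix $\mathbf{p}\,\mathbf{q}^{(k)}$ with the reduced reverse distinguished subexpression $(\mathbf{w_0^J})^\Phi$. You assert that commutation moves preserve the reverse-distinguished condition, but this is not proven, and the base case of the claim --- that a length-$r$ prefix of a reduced word is automatically the reduced reverse distinguished subexpression for its own value --- is delicate to check against the definition of reverse distinguished as stated in the paper (Definition~\ref{defn:Jsets}). You should either prove the commutation-invariance lemma, or sidestep it by showing directly (using \Cref{prop:samefirstentries} and the definition) that the subexpression you produce satisfies the reverse-distinguished condition. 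The paper's proof of \Cref{prop:prefix} also glosses over this identification, so this is a gap you inherit rather than one you introduce, but since you explicitly invoke the lemma, the burden of proof sits with your write-up.
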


\begin{proof}
    Since $w_0^\Phi=(w_0^J)^\Phi({w_0}_J)^\Phi $, $\mathbf{u}$ is necessarily a reduced expression for $({w_0}_J)^\Phi$. Therefore, it consists of $s_i^\Phi$ for $i\in J$. Since $J=\{1,\ldots, k-1\}$, $W_J^\Phi\cong \mathfrak{S}_{k}$, where the longest element has length $\binom{k}{2}$ \cite{BjornerBrenti}. By direct observation of the expression $\mathbf{w_0}^\Phi$, it is possible to move $\binom{k}{2}$ many $s_i^\Phi$ to the end by commutation moves, namely the $s_{k-1}^\Phi\cdots s_{1}^\Phi$ from the last set of parentheses, the $s_{k-2}^\Phi\cdots s^\Phi_{1}$ from the second to last set of parentheses, and so on, until taking the $s^\Phi_1$ from the $(k-1)-$st to last set of parentheses.  
\end{proof}

\begin{prop}\label{prop:prefixtypeA}
    Let $\Phi=C$. Then, $(\mathbf{w_0^{J'}})^A$ contains the prefix $\psi(\mathbf{p})$, where $p$ is as in \cref{prop:prefix}. 

\end{prop}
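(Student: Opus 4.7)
The plan is to use the fact, by \cref{prop:psipreservesreduced}, that $\psi(\mathbf{p})$ is itself a reduced expression in $W^A$ representing some element $p^* \in W^A$, occurring as the initial segment of $\psi((\mathbf{w_0^J})^\Phi)$. Since by the remark following \cref{defn:Jsets} the reduced reverse distinguished subexpression $(\mathbf{w_0^{J'}})^A$ is the leftmost reduced subexpression for $(w_0^{J'})^A$ in $\psi((\mathbf{w_0^J})^\Phi)$, the proposition reduces to showing this leftmost choice selects every letter of $\psi(\mathbf{p})$.

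First I would identify $p^*$ explicitly in $W^A \cong \mathfrak{S}_{2n}$. Since $\mathbf{p}$ has length $\binom{n+1}{2} = \ell((w_0^{[n-1]})^C)$ and uses only simple reflections $\{s_i^\Phi : i \in K\}$, it is a reduced word for the minimal coset representative $(w_0^{[n-1]})^C$ of the Lagrangian case; hence $p^*$ is the block-transposition in $\mathfrak{S}_{2n}$ sending $(1,\ldots,n)$ to $(n+1,\ldots,2n)$ in order, lying inside $W^A_{\{k,\ldots,2n-k\}}$. Next I would verify $p^* \leq (w_0^{J'})^A$ in Bruhat order via the tableau criterion, using the explicit one-line form of $(w_0^{J'})^A$ as the minimal coset representative in $W^A/W^A_{J'}$: it sends $(1,\ldots,k) \mapsto (2n+1-k,\ldots,2n)$, $(k+1,\ldots,n) \mapsto (2n-k,\ldots,n+1)$, and $(n+1,\ldots,2n) \mapsto (1,\ldots,n)$. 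A length count furthermore shows $\ell((w_0^{J'})^A) = \ell(p^*) + \ell((p^*)^{-1}(w_0^{J'})^A)$, so $\psi(\mathbf{p})$ is the initial segment of some reduced expression for $(w_0^{J'})^A$.

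To conclude, I would exhibit a reduced subexpression of the suffix $\psi((\mathbf{w_0^J})^\Phi) \setminus \psi(\mathbf{p})$ whose product is $(p^*)^{-1}(w_0^{J'})^A$ (a permutation that fixes $\{n+1,\ldots,2n\}$ and acts on $[n]$ in a specific block pattern). Concatenating with $\psi(\mathbf{p})$ produces a reduced subexpression of $\psi((\mathbf{w_0^J})^\Phi)$ for $(w_0^{J'})^A$ using every letter in the initial segment; by uniqueness and the leftmost characterization of the reduced reverse distinguished subexpression \cite[Lemma 3.5]{marshrietsch2004}, this agrees with $(\mathbf{w_0^{J'}})^A$ on the first $|\psi(\mathbf{p})|$ positions, yielding the claimed prefix. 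The main obstacle is constructing this suffix subexpression, which requires analyzing how $\psi$ interacts with the uncommuted portion of the second half of $\mathbf{w_0}^\Phi$ described in the proof of \cref{prop:prefix}; a careful combinatorial construction, perhaps by induction on $n-k$, would complete the argument.
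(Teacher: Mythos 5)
Your proposal reaches the correct strategy in outline, but it has an explicitly acknowledged gap that is in fact the heart of the matter: you write ``the main obstacle is constructing this suffix subexpression\ldots a careful combinatorial construction, perhaps by induction on $n-k$, would complete the argument.''  Without that construction, your argument does not conclude, since all it establishes is that $\psi(\mathbf p)$ \emph{could} be a prefix of \emph{some} reduced subexpression for $(w_0^{J'})^A$; to pass from there to the reduced reverse distinguished subexpression you still need to exhibit the compatible completion in the suffix, and then invoke the leftmost characterization.  As stated, the proof is incomplete.

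The paper avoids this construction entirely by using a stronger observation than the one you invoke.  You only check that the element $p^*$ represented by $\psi(\mathbf p)$ satisfies $p^* \leq (w_0^{J'})^A$ (via the tableau criterion) plus length additivity.  The paper instead observes, via Proposition~\ref{prop:samefirstentries}, that $(w_0^{J'})^A$ maps $[n]$ onto $\{n+1,\dotsc,2n\}$, and that $p^*$ (the block transposition) is the \emph{unique element of minimal length} in $\mathfrak S_{2n}$ with this property, sitting as the prefix of the ambient reduced word $\psi((\mathbf{w_0^J})^\Phi)$.  Combined with the fact that the reduced reverse distinguished subexpression is the leftmost reduced subexpression, this gives the claim directly: the greedy leftward choice for a target that sends $[n]$ onto $\{n+1,\dotsc,2n\}$ must take the entire prefix representing the minimal such element.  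The uniqueness/minimality of $p^*$ is precisely what replaces your explicit suffix construction, so if you want to complete your proof along your current lines you would need either to supply that construction or to upgrade your Bruhat comparison $p^* \leq (w_0^{J'})^A$ to this minimality statement.
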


\begin{proof}

By \Cref{prop:samefirstentries}, $({w_0}^J)^{A}([n])=[\overline{n}]$. The shortest element of $\mathfrak{S}_{2n}$ with this property is $\psi(\mathbf{p})$ and so the claim follows from the definition of reverse positive distinguished. 
\end{proof}

\begin{prop}\label{prop:distinguishedbraid}
    Let $\Phi=B$, and let $\bu$ be a reduced distinguished subexpression of $\psi(\left(w_0^J\right)^B)$. Let $i\leq n$. It is impossible to rewrite $\bu$ using only commutation moves in $\mathfrak{S}_{2n+1}$ as $\mathbf{v_1} s_{i}^A s_{i+1}^A s_{i}^A\mathbf{v_2}$, with the second $s_i^A$ coming from $\psi(\mathbf{p})$ and $\mathbf{v_1}$, $\mathbf{v_2}$ freely chosen expressions.
\end{prop}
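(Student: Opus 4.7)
The plan is to derive a contradiction from the assumption that $\bu$ can be rewritten, using only commutations in $\mathfrak{S}_{2n+1}$, as $\mathbf{v}_1 s_i^A s_{i+1}^A s_i^A \mathbf{v}_2$ with the last $s_i^A$ originating from a position $c$ lying in the prefix $\psi(\mathbf{p})$. I would first label the originating positions of the three triple-letters as $a < b < c$ inside $\psi((w_0^J)^B)$; since $s_i^A$ and $s_{i+1}^A$ do not commute in $\mathfrak{S}_{2n+1}$, the ordering $a<b<c$ is preserved by every commutation move and hence is genuine in the original expression.

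Next I would use the explicit formulas from \Cref{lem:pinning1}, namely $\psi(s_j^B) = (s_j^A, s_{2n+1-j}^A)$ for $j < n$ and $\psi(s_n^B) = (s_n^A, s_{n+1}^A, s_n^A)$, together with the staircase form of $\mathbf{p}$ recorded in \Cref{prop:prefix}, to enumerate the positions of $\psi(\mathbf{p})$ that may carry $s_i^A$ or $s_{i+1}^A$. When $i = n$ the picture is rigid: every $s_n^A$ in $\psi(\mathbf{p})$ is either the opening or the closing letter of some $\psi(s_n^B)$ block, while every $s_{n+1}^A$ occupies the middle slot of such a block. For $i < n$ the letters $s_i^A$ and $s_{i+1}^A$ occur only as first entries of $\psi(s_i^B)$ and $\psi(s_{i+1}^B)$ factors, interleaved in the prescribed staircase pattern. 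The commutation constraint then demands that every letter of $\bu$ strictly between positions $a$ and $c$ commute with $s_i^A$ or with $s_{i+1}^A$; in $\mathfrak{S}_{2n+1}$ the only simple reflections that commute with neither of these two are $s_i^A$ and $s_{i+1}^A$ themselves, so no additional copies of these may appear in $\bu$ between $a$ and $c$.

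Finally I would carry out a case analysis based on which role (opening, middle, or closing) the positions $b$ and $c$ play within their blocks of $\psi(\mathbf{p})$. In each case the previous step identifies a specific position inside $\psi((w_0^J)^B)$ at which a non-commuting letter (for instance, the closing $s_n^A$ of the $\psi(s_n^B)$ block containing $b$ in the $i=n$ subcase, or an $s_{n+2}^A$ coming from some $\psi(s_{n-1}^B)$ in between when $c$ opens a later block) must be absent from $\bu$; a local length calculation at that position then shows $u_{(\cdot)} s_\bullet < u_{(\cdot)}$, which by distinguishedness forces the position into $\bu$ with decreasing length, contradicting $J^-_{\bu} = \emptyset$ for reduced distinguished subexpressions. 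The hardest part will be the bookkeeping in the $i<n$ case, where the interleaving of $\psi(s_j^B)$ blocks across varying $j$ proliferates the subcases; I would organize these by the relative staircase positions of the blocks carrying $a, b, c$ within $\mathbf{p}$ and verify the local length computations using explicit one-line descriptions of the partial products $u_{(\cdot)}$.
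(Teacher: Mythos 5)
Your approach differs from the paper's in one essential way: the paper's proof hinges on \emph{applying the braid move} $s_i^A s_{i+1}^A s_i^A \mapsto s_{i+1}^A s_i^A s_{i+1}^A$ and then observing (via the structure of $\psi$) that the new rightmost $s_{i+1}^A$ can be placed at a later position $d$ in the ambient word that is not in $\bu$; including $d$ would make the subexpression non-reduced, yet distinguishedness requires $d$ to be chosen, which is the contradiction. Your proposal attempts to reach a contradiction without the braid move, by pure case analysis on the block structure of $\psi(\mathbf{p})$ at the originating positions $a<b<c$. This is a genuinely different route, but as sketched it has two gaps.

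First, the claim that ``the only simple reflections that commute with neither $s_i^A$ nor $s_{i+1}^A$ are $s_i^A$ and $s_{i+1}^A$ themselves, so no additional copies of these may appear in $\bu$ between $a$ and $c$'' is not the whole story for the commutation analysis. A letter $s_{i-1}^A$ sitting between $b$ and $c$ (it commutes with $s_{i+1}^A$ but not $s_i^A$) cannot be shuffled to either side: it cannot pass the $s_i^A$ at $a$ going left nor the $s_i^A$ at $c$ going right. Similarly $s_{i+2}^A$ between $a$ and $b$ is potentially trapped depending on what else is present. So the set of forbidden letters between $a$ and $c$, and more importantly the set of positions that \emph{must} be skipped by $\bu$, is larger and more delicate than what you state, and this is exactly the bookkeeping the braid move is designed to sidestep.

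Second, the distinguishedness step as you describe it does not produce a contradiction in the subcases you name. Take your example with $i=n$, $a,b$ in one $\psi(s_n^B)$ block and $c$ in a later block, and let $e$ be the closing $s_n^A$ of the first block. At position $e$ the partial product $u_{(e-1)}$ ends (after commutation) in $s_n^A s_{n+1}^A$ coming from $a,b$, so $u_{(e-1)} s_n^A$ has a reduced suffix $s_n^A s_{n+1}^A s_n^A$ and hence \emph{increased} length, not decreased. Distinguishedness therefore does not force $e$ into $\bu$, and there is no contradiction at $e$. The paper avoids this trap precisely by looking past $c$: after the braid move one needs the $s_{i+1}^A$ position $d>c$, and there $u_{(d-1)}$ already has (up to commutation) the suffix $s_{i+1}^A s_i^A s_{i+1}^A$, so $u_{(d-1)} s_{i+1}^A$ genuinely drops in length, which is what makes the distinguishedness contradiction fire. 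Without the braid move there is no obvious candidate position where the length actually decreases, and the case analysis you outline would have to discover this observation anyway.
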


\begin{proof}

    Suppose $i\leq n$ and $s_i^A$ comes from $\psi(\mathbf{p})$. Note that by \cref{prop:prefix}, all of $\psi(\mathbf{p})$ appears in $\psi(\left(w_0^J\right)^B)$. By the description of $\psi$ before \cref{lem:pinning1}, in $\mathbf{w_0}^A$, $s_i^A$ always appears, up to commutation moves, immediately before an $s_{i+1}^A$. Thus, if we apply a braid move $\mathbf{t}=s_i^As_{i+1}^As_i^A\eqqcolon t_1t_2t_3\mapsto \mathbf{r}=s_{i+1}^As_i^As_{i+1}^A\eqqcolon r_1r_2r_3$, we maintain the property of being a subexpression of $\mathbf{w_0}^A$: $r_1$ and $r_2$ take the places of $t_2$ and $t_3$, respectively, whereas $r_3$ in the position of the $s_{i+1}^A$ following $t_3$, guaranteed by the beginning of this proof. We note that this transposition is not already used in $\bu$ since, if it were, we would have a contradiction to the reducedness of $\bu$. Observe that adding $r_3$ to $\bu$ would result in a non-reduced subexpression of $\mathbf{w_0}^A$. This would imply, by the definition of distinguishedness, that $r_3$ should appear in $\bu$, a contradiction.
\end{proof}

\begin{lem}\label{lem:nobadbraid}

    Let $\bw$ be a reduced expression for $w\in\mathfrak{S}_n$. Suppose in $\bw$ there are two factors $r_1,r_2=s_{i+1}^{A}$ with no $s_{i}^A$ between them. Then, up to commutation moves, there is a subsexpression of $\bw$ of the form $\bw_1s_{j}^As_{j+1}^As_{j}^A\bw_2$, with the three simple transpositions lying weakly between $r_1$ and $r_2$ in $\bw$. 
\end{lem}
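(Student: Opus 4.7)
The plan is to induct on the length $\ell(\tilde{\bw})$ of the subword $\tilde{\bw}$ of $\bw$ running from $r_1$ to $r_2$ inclusive, with the inductive hypothesis quantified over all choices of $i$. Reducedness forces $\ell(\tilde{\bw}) \geq 3$; in the base case $\ell(\tilde{\bw}) = 3$, the single middle letter $x$ cannot be $s_i^A$ (by hypothesis) or $s_{i+1}^A$ (by reducedness), and cannot commute with $s_{i+1}^A$ (for if it did, commuting $r_1$ past $x$ would produce the reducible $s_{i+1}^A s_{i+1}^A$). Hence $x = s_{i+2}^A$ and $\tilde{\bw}$ itself is the braid $s_{i+1}^A s_{i+2}^A s_{i+1}^A$.

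For the inductive step, I would first handle the easy case in which some $s_{i+1}^A$ appears strictly between $r_1$ and $r_2$. Letting $t$ be the first such factor, the subword $[r_1,t]$ is strictly shorter than $\tilde{\bw}$ and inherits the hypothesis that no $s_i^A$ lies between its endpoints, so the inductive hypothesis yields a braid weakly between $r_1$ and $t$, which lies weakly between $r_1$ and $r_2$. It therefore remains to treat the case where $r_1,r_2$ are the only two $s_{i+1}^A$ factors in $\tilde{\bw}$. A short reducedness argument shows that at least one $s_{i+2}^A$ must appear strictly between $r_1$ and $r_2$: otherwise every letter in between lies outside $\{s_i^A, s_{i+1}^A, s_{i+2}^A\}$ and so commutes with $s_{i+1}^A$, allowing us to move $r_1$ rightward past every such letter until $r_1 r_2 = s_{i+1}^A s_{i+1}^A$ appears consecutively, contradicting reducedness.

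List the $s_{i+2}^A$'s strictly between $r_1$ and $r_2$ as $c_1,\dots,c_k$ in order. When $k = 1$, every letter in $[r_1, c_1)$ and $(c_1, r_2]$ avoids $\{s_i^A, s_{i+1}^A, s_{i+2}^A\}$ (no $s_i^A$ by hypothesis, no $s_{i+1}^A$ by case assumption, no $s_{i+2}^A$ by choice of $c_1$), so each such letter commutes with $s_{i+1}^A$. Commuting $r_1$ rightward and $r_2$ leftward until both are adjacent to $c_1$ produces three consecutive positions reading $s_{i+1}^A s_{i+2}^A s_{i+1}^A$. When $k \geq 2$, the subword $[c_1, c_2]$ is strictly shorter than $\tilde{\bw}$, and $c_1, c_2$ are two occurrences of $s_{(i+1)+1}^A$ with no $s_{i+1}^A$ between (by the case assumption). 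Applying the inductive hypothesis with $i$ replaced by $i+1$ yields a braid weakly between $c_1$ and $c_2$, which lies weakly between $r_1$ and $r_2$.

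The main point worth pinning down is the bookkeeping around the induction parameter: the recursion in the $k \geq 2$ case shifts the index from $i$ to $i+1$, so the induction must be on the length of $\tilde{\bw}$ with $i$ allowed to vary, rather than on any quantity involving only $\bw$ or a fixed $i$. Everything else reduces to observing that a simple reflection $s_m^A$ commutes with $s_{i+1}^A$ precisely when $m \notin \{i, i+1, i+2\}$, which drives all of the case analysis above.
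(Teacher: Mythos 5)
Your proof is correct and follows essentially the same strategy as the paper's: both arguments bottom out in the same key recursion, passing from the pair $(r_1,r_2)$ at index $i$ to the first two occurrences of $s_{i+2}^A$ at index $i+1$ when more than one such occurrence is present, and rewriting directly when there is exactly one. The only substantive bookkeeping difference is the induction parameter: the paper runs reverse induction on $i$ (base case $i=n-1$ being vacuous), whereas you induct on the length of the subword $[r_1,r_2]$ with $i$ allowed to vary; your separate Case A (an intermediate $s_{i+1}^A$ strictly between $r_1$ and $r_2$) is handled in the paper more tersely, by observing in its Cases 1--2 that such a factor either cannot occur or itself completes a braid $r_1 r_3 r_5$.
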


\begin{proof}

    We work by reverse induction on $i$. If $i=n-1$, the hypotheses are impossible so the lemma statement is vacuously true. 
     
    By reducedness, there is either an $s_i^A$ or an $s_{i+2}^A$ between $r_1$ and $r_2$. By assumption, it is $r_3=s_{i+2}^A$. If this is the only one, we can rewrite $\bw$ as $\bw_1 r_1r_3r_2\bw_2$, which is in the desired form. Otherwise, let $r_3$ be the leftmost of the $s_{i+2}^A$ occurring between $r_1$ and $r_2$, and $r_4$ the second from the left. If there were a factor $r_5=s_{i+1}^A$ between $r_3$ and $r_4$, we could rewrite $\bw$ as $\bw_1 r_1r_3r_5\bw_2$, which is in the desired form. Otherwise, there is no copy of $s_{i+1}$ between $r_3$ and $r_4$, and we are done by induction.
\end{proof}

\begin{defn}
    Define $\psi_{\leq n}(s_i^\Phi)$ to be the subexpression of $\psi(s_i^\Phi)$ consisting of the unique factor $s_i^A$ with $i<n$. Extend $\psi_{\leq n}$ to expressions by $\psi_{\leq n}(s_{i_1}\cdots s_{i_t})=\psi_{\leq n}(s_{i_1})\cdots \psi_{\leq n}(s_{i_l})$.
\end{defn}

\begin{prop}\label{prop:keepsmalltranspositions}
    If $n\notin J$, then $\left(\mathbf{w_0^{J'}}\right)^A$ contains $\psi_{\leq n}((\mathbf{w_0^J})^\Phi)$ as a subexpression.    
\end{prop}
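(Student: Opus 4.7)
My plan proceeds by pinning down the structural relationship between $\psi((w_0^J)^\Phi)$ and $(w_0^{J'})^A$. By Corollary~\ref{cor:Bruhatcomparison}, there is an element $u \in W^A_{J'}$ satisfying $\psi((w_0^J)^\Phi) = (w_0^{J'})^A \cdot u$. By Proposition~\ref{prop:samefirstentries}, the permutations $\psi((w_0^J)^\Phi)$ and $(w_0^{J'})^A$ agree on $[n]$, so $u$ fixes $[n]$ pointwise. Consequently, $u$ lies in the parabolic subgroup generated by $\{s_i^A : i > n\}$, and every reduced expression for $u$ consists solely of simple reflections of index strictly greater than $n$.

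The key claim I plan to establish is that the reduced reverse distinguished subexpression $(\mathbf{w_0^{J'}})^A$ skips only factors of index strictly greater than $n$ inside $\psi((\mathbf{w_0^J})^\Phi)$. Granted this, every factor $s_{i_j}^A$ with $i_j < n$ is retained in $(\mathbf{w_0^{J'}})^A$. Since $\psi_{\leq n}((\mathbf{w_0^J})^\Phi)$ is, by definition, the ordered concatenation of exactly these index-$(<n)$ factors in the order they appear within $\psi((\mathbf{w_0^J})^\Phi)$, it is a subexpression of $(\mathbf{w_0^{J'}})^A$, yielding the proposition.

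To prove the key claim, I would construct an explicit reduced subexpression $\widetilde{\mathbf v}$ of $\psi((\mathbf{w_0^J})^\Phi)$ representing $(w_0^{J'})^A$ that retains every factor of index $\leq n$. Starting from a reduced word $\bu$ for $u$, which uses only simple reflections of index $>n$, one locates its letters among the large-index positions of $\psi((\mathbf{w_0^J})^\Phi)$ via a matching procedure; deleting those $\ell(u)$ positions yields a subexpression $\widetilde{\mathbf v}$, which represents $(w_0^{J'})^A$ by construction. One then verifies the reverse distinguished condition for $\widetilde{\mathbf v}$ and invokes the uniqueness statement of \cite[Lemma 3.5]{marshrietsch2004} to identify $\widetilde{\mathbf v}$ with $(\mathbf{w_0^{J'}})^A$.

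The main obstacle I anticipate is the verification that $\widetilde{\mathbf v}$ satisfies the reverse distinguished condition. Unlike positive distinguished subexpressions, which admit a simple left-to-right greedy characterization, reverse distinguished is a Bruhat-order inequality $u_{(j-1)} \leq s_{i_j}^A u_{(j)}$ that must be checked position-by-position and can fail even when lengths behave as expected. The verification will require a careful case analysis on the local structure of the running product, likely making use of the specific prefix $\psi(\mathbf{p})$ of $(\mathbf{w_0^{J'}})^A$ provided by Proposition~\ref{prop:prefixtypeA} together with the corollaries \ref{cor:reducedinAfromC} and \ref{cor:reducedinAfromB} on how reducedness in $W^\Phi$ lifts to $W^A$.
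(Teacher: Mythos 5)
Your approach is genuinely different from the paper's, and it has a gap at the crucial construction step. The paper proves the proposition by a counting argument: it introduces a statistic $\tinv(w)$ (the number of inversions among the first $n$ positions plus the number of ``barred'' values in those positions), observes that left multiplication by $s_i^A$ with $i>n$ leaves $\tinv$ unchanged while multiplication by $s_i^A$ with $i\leq n$ increases it by at most $1$, and then uses Proposition~\ref{prop:samefirstentries} to see that $\tinv\bigl((w_0^{J'})^A\bigr)=\tinv\bigl(\psi((w_0^J)^\Phi)\bigr)$ equals exactly the number of small-index letters available in $\psi((\mathbf{w_0^J})^\Phi)$. Consequently \emph{any} reduced subexpression for $(w_0^{J'})^A$ inside $\psi((\mathbf{w_0^J})^\Phi)$ must use every small-index letter; the reverse distinguished one is just a special case.

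Your proposal has two problems. First, the step ``deleting those $\ell(u)$ positions yields a subexpression $\widetilde{\mathbf v}$, which represents $(w_0^{J'})^A$ by construction'' does not follow. The identity $\psi((w_0^J)^\Phi)=(w_0^{J'})^A\cdot u$ with $\ell$ additive is a statement about the elements, not about a fixed reduced word. There is no general principle that lets you pick $\ell(u)$ letters of a given reduced word for $(w_0^{J'})^A u$ and conclude that the complementary subsequence is a reduced word for $(w_0^{J'})^A$; the complementary letters need not multiply out to $(w_0^{J'})^A$ at all, even when $u$ uses only large-index generators. You would need a concrete matching argument (akin to the commutation-move analysis the paper carries out later, in Proposition~\ref{prop:braidmovesbiggerthann}, which \emph{uses} the present proposition as input, so cannot be invoked here without circularity). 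Second, you correctly flag the verification of the reverse distinguished condition as the main obstacle, but you leave it entirely open; this is not a detail that can be deferred, since reverse distinguishedness is precisely what identifies $\widetilde{\mathbf v}$ with $(\mathbf{w_0^{J'}})^A$. The paper's $\tinv$ argument is strictly stronger in that it applies to \emph{every} reduced subexpression and therefore never has to touch the reverse distinguished condition directly.
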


\begin{proof}

Observe that $\psi_{\leq n}((\mathbf{w_0^J})^\Phi)$ consists of all transpositions $s_i^A$ appearing in $\psi((\mathbf{w_0^J})^\Phi)$ with $i\in[n]$. There are exactly $\ell(({w_0^J})^\Phi)$ many of these, one each of the form $\psi_{\leq n}(s_i^\Phi)$ for $s_i^\Phi$ appearing in $(\mathbf{w_0^J})^\Phi$. By \Cref{cor:Bruhatcomparison}, we have a subexpression $\left(\mathbf{w_0^{J'}}\right)^A$ for $\left(w_0^{J'}\right)^A$ in $\psi((\mathbf{w_0^J})^\Phi)$. It suffices to show that this subexpression uses all $\ell((w_0^J)^\Phi)$ of the $s_i^A$ with $i\in[n]$ appearing in $\psi((\mathbf{w_0^J})^\Phi)$.

For $w$ in $W^A$, define $\tinv(w)$ to be the number of inversions plus the number of barred elements amongst the first $n$ entries of $w$. Explicitly, $\tinv(w)\coloneqq|\{(i,j)\in[n]^2\mid i<j,\, w(j)<w(i)\}|+|\{i\in[n]\mid w(i)\in[\overline{n}]\}$. For $w\in W^\Phi$, define $\tinv(w)\coloneqq \tinv(\psi(w))$ Observe that if $w\in W^A$ and $s_i^Aw>w$, then $\tinv(s_i^Aw)\leq \tinv(w)+1$. In particular, note that $\tinv(s_i^Aw)= \tinv(w)$ if $i> n$. Similarly, if $w\in W^\Phi$, then $\tinv(s_i^\Phi w)\leq \tinv(w)+1$. However, observe that $\tinv(w_0^\Phi)=\binom{n+1}{2}=\ell(w_0^\Phi)$. Thus, each $s_i^\Phi$ in an expression for $w_0^\Phi$ contributes exactly $1$ to $\tinv(w_0^\Phi)$. By writing $\mathbf{w_0}^{\Phi}=\bu ((\mathbf{{w_0}^J)})^{\Phi}$, where $\bu$ is an expression for $({w_0}_J)^{\Phi}$, we can conclude that $\tinv((w_0^J)^\Phi)=\ell((w_0^J)^\Phi)$. 

By \Cref{prop:samefirstentries},  $\left(w_0^{J'}\right)^A(i)=\psi((w_0^J)^\Phi)(i)$ for $i\in[n]$ and so $\tinv(\left(w_0^{J'}\right)^A)=\tinv((w_0^J)^\Phi)$. Thus, $\left(w_0^{J'}\right)^A$ has at least $\ell((w_0^J)^\Phi)$ factors of $s_i$ with $i\in[n]$. However, as we observed at the beginning of this proof, that is in fact all of them. 
\end{proof}

\begin{prop}\label{prop:braidmovesbiggerthann}
    An expression $(\mathbf{w_0^{J'}})^A\bu$ can be obtained from $\psi(\mathbf{w_0^J})^\Phi$ without doing any braid moves involving $s_i^A$ for $i\in[n]$.
\end{prop}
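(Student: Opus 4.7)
The plan is to transform the expression $\psi((\mathbf{w_0^J})^\Phi)$ into the form $(\mathbf{w_0^{J'}})^A \bu$ by sliding every factor of $\psi((\mathbf{w_0^J})^\Phi)$ not selected by $(\mathbf{w_0^{J'}})^A$ past the selected letters to the right end, using only commutations and braid moves among indices $> n$. By \Cref{prop:keepsmalltranspositions}, every such unselected factor has the form $s_i^A$ with $i > n$, so the task reduces to moving ``big-index'' letters past the rest of the word without ever triggering a braid relation involving a small index.

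The easy case is $i \geq n+2$. Such a letter $s_i^A$ fails to commute only with $s_{i-1}^A$ and $s_{i+1}^A$, both of whose indices exceed $n$. Hence any braid move required while sliding such a letter rightward is of the allowed type. After finitely many allowed moves I can therefore assume that every unselected factor with index $\geq n+2$ has already been pushed to the right end, and what remains is to handle the unselected copies of $s_{n+1}^A$.

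The main obstacle is precisely this remaining case: $s_{n+1}^A$ does not commute with $s_n^A$, and the braid move between them has the forbidden small index $n$. I must show that no unselected $s_{n+1}^A$ is ever forced to braid past a selected $s_n^A$. For type C, I would argue by direct structural analysis: every $s_{n+1}^A$ arises as the second letter of $\psi(s_{n-1}^C) = s_{n-1}^A s_{n+1}^A$, and the explicit prefix $\psi(\mathbf{p})$ of $(\mathbf{w_0^{J'}})^A$ supplied by \Cref{prop:prefix} and \Cref{prop:prefixtypeA} pins down the relative positions of $s_n^A$ and $s_{n+1}^A$ closely enough that each problematic adjacency is resolved by commuting $s_{n+1}^A$ past its companion $s_{n-1}^A$ (which they commute with) or past a neighboring big-index letter. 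For type B, I would argue by contradiction: if at some stage a disallowed braid on a pattern $s_i^A s_{i+1}^A s_i^A$ with $i \leq n$ were forced, then \Cref{lem:nobadbraid} combined with the reduced subexpression structure of $(\mathbf{w_0^{J'}})^A$ inside $\psi((\mathbf{w_0^J})^B)$ would produce this pattern (up to commutation) with the middle letter originating in the prefix $\psi(\mathbf{p})$, contradicting \Cref{prop:distinguishedbraid}.

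The hardest part, as I see it, is making the rearrangement procedure precise enough that the subexpressions arising at intermediate stages satisfy the hypotheses required by \Cref{lem:nobadbraid} and \Cref{prop:distinguishedbraid}, since the latter is stated for \emph{distinguished} subexpressions while $(\mathbf{w_0^{J'}})^A$ is only known to be \emph{reverse} distinguished. I would expect the cleanest organization to be an induction on the number of unselected $s_{n+1}^A$ factors, maintaining as an invariant that no such factor is ``trapped'' to the left of a selected $s_n^A$ by large-index letters alone, with the base case supplied by the explicit prefix analysis in type C and by the contradiction argument in type B.
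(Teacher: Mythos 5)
Your plan is essentially the paper's: both arguments start from Proposition~\ref{prop:keepsmalltranspositions} to force the ``unselected'' letters to have index $>n$, dispose of $i\geq n+2$ by observing that those letters only ever braid with other big-index letters, and then isolate $s_{n+1}^A$ vs.\ $s_n^A$ as the sole danger, handled by \Cref{prop:prefixtypeA} in type $C$ and by \Cref{lem:nobadbraid} together with \Cref{prop:distinguishedbraid} in type $B$. The paper organizes the bookkeeping slightly differently -- it multiplies $\psi((\mathbf{w_0^J})^\Phi)$ on the right by $\bu^{-1}$ and slides each new transposition leftward until it cancels, rather than pushing unselected letters rightward -- but that is a cosmetic reparametrization of the same process, and it has the advantage that one can observe directly that the procedure preserves reverse-distinguishedness, so it automatically terminates at $(\mathbf{w_0^{J'}})^A$. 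Two places where your sketch is softer than the paper's argument: (i) your type $C$ case is left as ``direct structural analysis'' with a claim that adjacencies are ``resolved closely enough''; the paper instead argues by contradiction, using that all $s_n^A$ lie in $\psi(\mathbf p)$, so a braid there would force a cancellation inside the prefix, contradicting \Cref{prop:prefixtypeA} -- you should adopt something like this rather than a constructive commuting scheme, which is hard to make precise; and (ii) your plan to first clear out all letters of index $\geq n+2$ and only then treat $s_{n+1}^A$ does not obviously decouple, since braids among big-index letters reposition the $s_{n+1}^A$'s, so an interleaved induction (as the paper does, one cancellation at a time) is safer. Your flagged concern about \Cref{prop:distinguishedbraid} being stated for distinguished subexpressions while $(\mathbf{w_0^{J'}})^A$ is constructed as reverse distinguished is a genuine subtlety; the paper's proof does not explicitly reconcile it, and you are right that any careful write-up needs to say what object the hypotheses of \Cref{prop:distinguishedbraid} are being checked against at the intermediate stages.
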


\begin{proof}

    We first observe that an expression $(\mathbf{w_0^{J'}})^A\bu$ for $\psi((w_0^J)^\Phi)$ in fact exists by \Cref{cor:Bruhatcomparison}. 
    
    Fix any expression $\bu$ for the Weyl group element $u$ appearing in \Cref{cor:Bruhatcomparison}. We will prove the result by multiplying $\psi(\mathbf{w_0^J})^\Phi$ on the right by $\bu^{-1}$, transposition by transposition. We will move each such transposition $\mathbf{s}$ through $\psi(\mathbf{w_0^J})^\Phi$ as far as possible using only commutation moves until $\mathbf{s}$ either cancels, or cannot move any further without preforming a braid move. A braid move involving $\mathbf{s}$ will be of the form $\mathbf{s}\mathbf{t}\mathbf{s}\mapsto \mathbf{t}\mathbf{s}\mathbf{t}$ for transpositions $\mathbf{s}$ and $\mathbf{t}$ which do not commute. After preforming the braid move, we move the leftmost copy of $\mathbf{t}$ left by commutation moves until it cancels or we are forced to preform another braid move. Eventually, this process must terminate with a cancellation since multiplication by $\mathbf{u}^{-1}$ decreases the length of our permutation by $\ell(\mathbf{u})$. If none of the braid moves performed in this process involve $s_i^A$ for $i\in[n]$, then the result holds. Note that $\psi(\mathbf{w_0^J})^\Phi$ is by definition a reverse distinguished subexpression of itself and this process preserves reverse distinguishedness, so it will leave us with the reduced reverse distinguished subexpression for $({w_0^{J'}})^A$ in $\psi(\mathbf{w_0^J})^\Phi$, namely, $(\mathbf{w_0^{J'}})^A$.

     The key observation for this proof will be that, by \Cref{prop:keepsmalltranspositions}, whatever transposition ends up actually cancelling must be an $s_i^A$ with $i > n$. 
    
    We prove this result inductively on transpositions appearing in $\mathbf{u}^{-1}$. Then, we may assume by induction that we have a reduced expression $\bw$ for some word $w$ such that $(w_0^J)^A<w\leq \psi((w_0^J)^\Phi)$. We move some $s_i^A$ which appears in $\bu$ through it, from right to left, until it cancels out with another transposition. 
    
    By \Cref{cor:Bruhatcomparison}, $\mathbf{u}$ is a product of transpositions $s_{j}^A$ with $j>n$ and so we may assume the $s_i^A$ which we move through $\bw$ has $i>n$. If we cannot commute $s_i^A$ to the left at some point, and it still has not been cancelled out, then we must do a braid move, either of the form $s_i^As_{i-1}^As_{i}^A \mapsto s_{i-1}^As_{i}^As_{i-1}^A$ or of the form $s_i^As_{i+1}^As_{i}^A\mapsto s_{i+1}^As_{i}^As_{i+1}^A$. In the either case, we may now continue to move the new leftmost transposition to the left until it cancels with something or we are forced to do another braid move.

    Suppose we eventually perform a braid move involving an $s_{i}^A$ for $i\leq n$. In particular, along the way, we must perform the braid move $s_{n+1}^As_n^As_{n+1}^A\mapsto s_{n}^As_{n+1}^As_{n}^A$.

     Let $\Phi=C$. All copies of $s_n^A$ occur in the prefix $\mathbf{p}$ of $\mathbf{w_0}^{A}$.
    Since we preformed a braid move involving terms in the prefix $\mathbf{p}$, whatever ends up cancelling must also be in $\mathbf{p}$. This contradicts \cref{prop:prefixtypeA}.
    
    Let $\Phi=B$. We know that whatever eventually cancels out must be a transposition $s_i^A$ with $i>n$. Thus, we must eventually have a braid move of the form $\mathbf{r}=s_{j}^As_{j+1}^As_{j}^A\eqqcolon r_1r_2r_3\mapsto \mathbf s_{j+1}^As_{j}^As_{j+1}^A$ with $j\leq n$. Consider the first such braid move. It must be preceded by the braid move $s_{j+1}^As_{j}^As_{j+1}^A\mapsto \mathbf{q}=\mathbf s_{j}^As_{j+1}^As_{j}^A\eqqcolon q_1q_2q_3$, with no other braid moves in between. Observe that by this construction, $q_1=r_3$, in other words, we move $q_3$ through the transposition until it is forced into the braid move involving $\mathbf{r}$. Observe that there must not be any factors of $s_i^A$ in between $q_2$ and $r_2$ other than $q_1$, since $q_1$ does not cancel out as it commutes through. Moreover, since $q_1$ is the transposition which we are moving through our expression, it does not appear in $\bw$. Thus, in $\bw$, $q_2$ and $r_2$ satisfy the hypotheses of \Cref{lem:nobadbraid}. However, this contradicts \Cref{prop:distinguishedbraid}.

\end{proof}
The following lemma is easy to verify and we record it here for completeness.

\begin{lem}\label{lem:ycommutation}
If $s_i^As_j^A=s_j^As_i^A$ for $i\neq j$, then $g_i^Ag_j^A=g_j^Ag_i^A$, where $g_i^A\in\{y_i(t_i),\dot{s}_i, x_i(m_i)\dot{s}_i^{-1}\}$.    
\end{lem}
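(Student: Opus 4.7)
The lemma amounts to showing that when $|i-j|\geq 2$ (which is precisely when $s_i^A$ and $s_j^A$ commute in the type $A$ Weyl group), every matrix of the form $y_i(t)$, $\dot s_i$, or $x_i(m)\dot s_i^{-1}$ commutes with every matrix of the form $y_j(t')$, $\dot s_j$, or $x_j(m')\dot s_j^{-1}$. My plan is to reduce each of the nine cases to the single observation that these elements, as matrices in $GL_N$, act nontrivially only on disjoint pairs of rows and columns.

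First, from the explicit description of the standard pinning of $GL_N$ given in Section~\ref{sect:pinning}, the homomorphism $\phi_i^A: SL_2 \to GL_N$ embeds $SL_2$ as a $2\times 2$ block in rows and columns $\{i,i+1\}$, with the identity matrix on the complementary rows and columns. In particular, $y_i(t) = \phi_i^A\bigl(\begin{smallmatrix}1 & 0\\ t & 1\end{smallmatrix}\bigr)$ and $\dot s_i = \phi_i^A\bigl(\begin{smallmatrix}0 & -1\\ 1 & 0\end{smallmatrix}\bigr)$. Since $\phi_i^A$ is a group homomorphism, $x_i(m)\dot s_i^{-1} = \phi_i^A\bigl(\begin{smallmatrix}1 & m\\ 0 & 1\end{smallmatrix}\bigr)\phi_i^A\bigl(\begin{smallmatrix}0 & 1\\ -1 & 0\end{smallmatrix}\bigr) = \phi_i^A\bigl(\begin{smallmatrix}-m & 1\\ -1 & 0\end{smallmatrix}\bigr)$ as well. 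Hence each of the three allowed forms for $g_i^A$ lies in the image of $\phi_i^A$, and in particular agrees with the identity matrix outside of rows and columns $\{i,i+1\}$.

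Next, the commutation relation $s_i^A s_j^A = s_j^A s_i^A$ in $W^A \cong \mathfrak S_N$ holds if and only if $\{i,i+1\}\cap\{j,j+1\} = \emptyset$. In that case, $g_i^A$ acts nontrivially only on rows and columns in $\{i,i+1\}$ while $g_j^A$ acts nontrivially only on rows and columns in $\{j,j+1\}$, and these are disjoint. A direct block computation (or equivalently, the observation that the images of $\phi_i^A$ and $\phi_j^A$ centralize each other in $GL_N$ whenever their supporting index sets are disjoint) then yields $g_i^A g_j^A = g_j^A g_i^A$.

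I do not anticipate any real obstacle: the only thing to check carefully is that $x_i(m)\dot s_i^{-1}$ genuinely lies in the image of $\phi_i^A$ (which follows from $\phi_i^A$ being a homomorphism, as above), after which the result reduces to the elementary fact that two matrices differing from the identity only on disjoint rows and columns commute.
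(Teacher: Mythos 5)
Your proof is correct and follows essentially the same reasoning as the paper's: each $g_i^A$ lies in the image of $\phi_i^A$ and hence differs from the identity only in rows and columns $\{i,i+1\}$, and $s_i^A s_j^A = s_j^A s_i^A$ forces these index sets to be disjoint, so the matrices commute by a block computation. The extra verification that $x_i(m)\dot s_i^{-1}$ is in the image of $\phi_i^A$ is a reasonable point to spell out, and it is correct.
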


\begin{proof}
    This follows from the observations that $s_i^As_j^A=s_j^As_i^A$ if and only if $|i-j|>1$, and that $g_i^A$ is block diagonal with a $2\times 2$ block in rows $i,i+1$ and identity blocks in all other rows. 
\end{proof}

\begin{proof}[Proof of \Cref{thm:main} \ref{mainthm:consec}$\implies$\ref{mainthm:pos}]
In this proof, $G$ will denote either $\mathrm{Sp}_{2n}$ or $\mathrm{SO}_{2n+1}$, and $\mathrm{GFl}_{K}$ will denote the corresponding flag variety, $\mathrm{SpFl}_{K;2n}$ or $\mathrm{SOFl}_{K;2n+1}$, respectively. We will also denote by $\mathrm{Fl}_{K}$ the flag variety $\mathrm{Fl}_{K;2n}$ or $\mathrm{Fl}_{K;2n+1}$, respectively.

    The following lemma guarantees $\mathrm{GFl}_{K}^{>0}\subseteq\mathrm{GFl}_{K}^{\Delta>0}$ (in fact, for any $K\subset[n]$):

    \begin{lem}
    $\mathrm{Sp}_{2n}^{>0}\subseteq\mathrm{GL}_{2n}^{>0}$ and $\mathrm{SO}_{2n+1}^{>0}\subseteq\mathrm{GL}_{2n+1}^{>0}$.
    \end{lem}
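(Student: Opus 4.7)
The plan is to observe that this lemma is an immediate consequence of Proposition~\ref{prop:dagger}(a), so the task reduces to verifying that the embeddings $\iota:\operatorname{Sp}_{2n}\hookrightarrow GL_{2n}$ and $\iota:\operatorname{SO}_{2n+1}\hookrightarrow GL_{2n+1}$, paired with the maps $\psi$ defined just below Setup~\ref{setup}, satisfy property \hyperref[condition1]{($\dagger$)}. This verification is essentially already scattered throughout Sections~\ref{sect:pinning} and \ref{sec:compatibility}; assembling it is the entirety of the work.

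For the property \hyperref[condition1]{($\dagger 1$)}, I would appeal directly to Lemma~\ref{lem:pinning1}, which explicitly writes each $y_i^\Phi$, $x_i^\Phi$, and $\chi_i^\Phi$ as a product of the corresponding type A one-parameter subgroups and shows that $\dot s_i^\Phi = \dot{\mathbf s}_{\psi(i)}^A$. It remains only to check that the auxiliary functions $f_j$ appearing in these expressions are differentiable and send $\RR_{>0}$ into $\RR_{>0}$. In type C every $f_j$ is the identity, which is trivial. In type B the functions appearing for the $y_i^B$ and $x_i^B$ are $a\mapsto a$, $a\mapsto \sqrt{2}\,a$, and $a\mapsto a/\sqrt{2}$, while for $\chi_n^B$ the function is $t\mapsto t^2$; all are smooth and preserve positivity. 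For property \hyperref[condition1]{($\dagger 2$)}, I would invoke Corollary~\ref{cor:preserveslongestword}, which yields the stronger statement that \emph{every} reduced word for $w_0^\Phi$ maps under $\psi$ to a reduced word for $w_0^A$; in particular such a word exists, as required.

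Having verified property \hyperref[condition1]{($\dagger$)}, Proposition~\ref{prop:dagger}(a) immediately gives $\operatorname{Sp}_{2n}^{>0}\subseteq GL_{2n}^{>0}$ and $\operatorname{SO}_{2n+1}^{>0}\subseteq GL_{2n+1}^{>0}$, completing the proof. There is no genuine obstacle here: the ``hard part'' is really that positivity of the parameters must be preserved through the change of reduced word from $\psi(\mathbf i)$ to a chosen reduced word for $w_0^A$, but this is already built into the independence of $(U_\pm^A)^{>0}$ on the choice of reduced word noted just after Definition~\ref{defn:LusztigPos}, so no further work is needed beyond citation.
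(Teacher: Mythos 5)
Your proposal is correct and takes essentially the same approach as the paper: the paper's one-line proof is $(U_-^\Phi)^{>0}\subseteq(U_-^A)^{>0}$, $(T^\Phi)^{>0}\subseteq(T^A)^{>0}$, $(U_+^\Phi)^{>0}\subseteq(U_+^A)^{>0}$, which is precisely the reasoning encapsulated in Proposition~\ref{prop:dagger}(a). You simply invoke that proposition directly rather than re-deriving it inline, and your verification of \hyperref[condition1]{($\dagger$)} via Lemma~\ref{lem:pinning1} and Corollary~\ref{cor:preserveslongestword} matches what the paper already established in Section~\ref{sect:Luspositivity}.
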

    \begin{proof}
    This is because $(U_-^\Phi)^{>0}\subseteq(U_-^A)^{>0}$, $(T^\Phi)^{>0}\subseteq(T^A)^{>0}$, and $(U_+^\Phi)^{>0}\subseteq(U_+^A)^{>0}$.
    \end{proof}

 Consider a flag $F$ in $\mathrm{GFl}_{J}^{\Delta >0}$. Since all of its Pl\"ucker coordinates are positive and in particular nonzero, $F$ lies in the top Richardson cell. Thus, the Deodhar decomposition of $\mathrm{GFl}_J$ guarantees that
$F$ can be represented uniquely by a matrix $M=\mathbf{g}^\Phi_{\mathbf{i}}$, where $\mathbf{s}_\mathbf{i}=(\mathbf{w_0}^J)^\Phi$ and $g^\Phi_{i_k}\in \{y^\Phi_{i_k}(t_k),\dot{s}^\Phi_{i_k}, x^\Phi_{i_k}(m_k)(\dot{s}^\Phi_{i_k})^{-1}\}$. We can also view $F$ as a flag in $\mathrm{Fl}_{K'}^{\Delta > 0}=\mathrm{Fl}_{K'}^{>0}$ by \Cref{thm:BK}. Thus, as a flag of rank $K'$, $F$ can also be represented uniquely by a matrix $\mathbf{y}_{(\mathbf{w_0}^{J'})^A}(\mathbf{b})$ where $\mathbf{b}$ is such that each $b_i>0$. Let $\mathbf{s}_\mathbf{z}=(\mathbf{w_0}^{J'})^A$. We will show that this implies that for each $k$, $g^\Phi_{i_k}=y^\Phi_{z_k}(t_k)$ with $t_k>0$. 

Recall that $G$ satisfies properties \hyperref[condition1]{($\dagger$)} and \hyperref[condition2]{($\ddagger$)}. Using properties \hyperref[condition1]{($\dagger1$)}, \hyperref[condition2]{($\ddagger1$)} and \hyperref[condition2]{($\ddagger2$)}, we have that $\mathcal{R}^\Phi_{\bu,\bv}\subseteq \mathcal{R}^A_{\tilde{\psi}(\bu),\psi(\bv)}$. Thus, we can rewrite $M$ as $M=\mathbf{g}^A_{\mathbf{j}}$, where $\mathbf{s}_{\mathbf{j}}=\psi((\mathbf{w_0}^J)^\Phi)$ and $g^A_{j_k}\in \{y^A_{j_k}(t'_k),\dot{s}^A_{j_k}, x^A_{j_k}(m'_k)(\dot{s}^A_{j_k})^{-1}\}$. By \hyperref[condition2]{($\ddagger3$)} the positivity of the $\{t_k\}$ is equivalent to the positivity of the $\{t'_k\}$.

One may check by straightforward computations that if $s_{\alpha_1}^As_{\alpha_2}^As_{\alpha_3}^A=s_{\beta_1}^As_{\beta_2}^As_{\beta_3}^A$ are related by a braid move and $g^A_{\alpha_k}\in \{y^A_{\alpha_k}(t_k),\dot{s}^A_{\alpha_k}, x^A_{\alpha_k}(m_k)(\dot{s}^A_{\alpha_k})^{-1}\}$, then $g_{\alpha_1}^Ag_{\alpha_2}^Ag_{\alpha_3}^A={g'}_{\beta_1}^A{g'}_{\beta_2}^A{g'}_{\beta_3}^A$ for some choice of $\{t'_k,m'_k\}$ depending on $\{t_k,m_k\}$ and some choice of $g'^A_{\beta_k}\in \{y^A_{\beta_k}(t'_k),\dot{s}^A_{\beta_k}, x^A_{\beta_k}(m'_k)(\dot{s}^A_{\beta_k})^{-1}\}$.

By the observation in the previous paragraph and \Cref{lem:ycommutation}, together with \Cref{prop:braidmovesbiggerthann}, we can rewrite $M=\mathbf{g}^A_{\mathbf{j}}$ as $M=\mathbf{g}^A_{\mathbf{l}}\mathbf{g}^A_{\mathbf{p}}$, where $\mathbf{s}_\mathbf{p}=\bu$, $\mathbf{s}_\mathbf{l}=\left(\mathbf{w_0}^{J'}\right)^A$ and each $g^A_{l_k}\in \{y^A_{l_k}(t''_k),\allowbreak \dot{s}^A_{l_k}, x^A_{l_k}(m''_k)(\dot{s}^A_{l_k})^{-1}\}$ for some new choice of variables $\{t''_k,m''_k\}$. However, \Cref{prop:braidmovesbiggerthann} and \Cref{lem:ycommutation} guarantee that whenever $l_k<n$ and $g_{l_k}^A=y_{l_k}^A(t''_k)$, we have $t''_k=t'_{k'}$ for some $k'$. 

By \Cref{cor:Bruhatcomparison}, $\mathbf{g}^A_{\mathbf{j}}$ and $\mathbf{g}^A_{\mathbf{l}}$ represent the same flag of rank $K'$. We saw earlier that this same flag can be represented by a matrix $\mathbf{y}_{(\mathbf{w_0}^{J'})^A}(\mathbf{b})$ where $\mathbf{b}$ is such that each $b_k>0$. By the uniqueness of representatives in the Deodhar decomposition, we must have that each $g^A_{l_k}=y^A_{z_k}(t''_k)$ and that each $t''_k=b_k$. 

By \Cref{prop:keepsmalltranspositions}, $\mathbf{g}^A_{\mathbf{l}}$ contains a term originating from each $g_{i_k}^\Phi$ appearing in $\mathbf{g}^\Phi_{\mathbf{i}}$. Thus, for each $k$, we have $g_{i_k}^\Phi=y^\Phi_{i_k}(t_k)$, where $b_k=f_j(t_k)$ for some $f_j$ appearing in condition \hyperref[condition1]{($\dagger1$)}. In particular, by \hyperref[condition1]{($\dagger1$)} and \hyperref[condition2]{($\ddagger3$)}, $t_k>0$. This completes the proof.

\end{proof}

\small
\bibliography{ref.bib}
\bibliographystyle{alpha}
\end{document}